\def\argmax{\text{argmax}}
\newcommand{\ninseps}[3]{
\begin{figure}[h]
\begin{center}
 \scalebox{#3}{\includegraphics{#1}}
\end{center}

\vspace{-0.65cm}
\caption{\hspace{0.25cm}#2\label{f:#1}}
\end{figure}
}
\newtheorem{theorem}{Theorem}[section]
\newtheorem{lemma}{Lemma}
\newtheorem{remark}{Remark}
\newtheorem{proposition}{Proposition}[section]
\newtheorem*{theorem*}{Theorem}
\begin{document}
\title{Excessive Backlog Probabilities of Two Parallel Queues}
\author{Kamil Demirberk \"{U}nl\"{u}\footnote{Ankara University, Department of Statistics and Middle East Technical University, Institute of Applied Mathematics}, ~~Ali Devin Sezer\footnote{Middle East Technical University, Institute of Applied Mathematics, Ankara, Turkey}}

\maketitle
\begin{abstract}
Let $X$ be the constrained random walk on ${\mathbb Z}_+^2$ with increments $(1,0)$, $(-1,0)$, $(0,1)$ and $(0,-1)$; $X$ represents, at arrivals
and service completions, the lengths of two queues (or two stacks in computer science applications) working in parallel 
whose service and interarrival times are exponentially distributed
with arrival rates $\lambda_i$ and service rates $\mu_i$, $i=1,2$;
we assume $\lambda_i < \mu_i$, $i=1,2$, i.e., $X$ is assumed stable. Without loss of generality we assume $\rho_1 =\lambda_1/\mu_1  
\ge \rho_2 = \lambda_2/\mu_2$. 
Let $\tau_n$ be the first time $X$ hits the line $\partial A_n = \{x \in {\mathbb Z}^2:x(1)+x(2) = n \}$, i.e.,  when the sum of the components of $X$ equals $n$ for the first time. Let $Y$ be the same random walk as $X$ but only constrained on $\{y \in {\mathbb Z}^2: y(2)=0\}$ and its jump probabilities for the first component reversed. Let $\partial B =\{y \in {\mathbb Z}^2: y(1) = y(2) \}$ and let $\tau$ be the first time $Y$ hits $\partial B$.  The probability $p_n = P_x(\tau_n < \tau_0)$ is a key performance measure of the queueing system (or the two stacks) represented by $X$ (if the queues/stacks share a common buffer, then $p_n$ is the probability that this buffer overflows during the system's first busy cycle).  Stability of the process implies that $p_n$ decays exponentially in $n$ when the process starts off the exit boundary $\partial A_n.$ We show that, for $x_n= \lfloor nx \rfloor$, $x \in {\mathbb R}_+^2$, $x(1)+x(2) \le 1$, $x(1) > 0$, $P_{(n-x_n(1),x_n(2))}( \tau <  \infty)$ approximates $P_{x_n}(\tau_n < \tau_0)$ with exponentially vanishing relative error.  Let $r = (\lambda_1 + \lambda_2)/(\mu_1 + \mu_2)$; for $r^2 < \rho_2$ and $\rho_1 \neq \rho_2$, we construct a class of harmonic functions from single and conjugate points on a related characteristic surface for $Y$ with which the probability $P_y(\tau < \infty)$ can be approximated with bounded relative error. For $r^2 = \rho_1 \rho_2$, we obtain the exact formula $P_y(\tau < \infty) = r^{y(1)-y(2)} +\frac{r(1-r)}{r-\rho_2}\left( \rho_1^{y(1)} - r^{y(1)-y(2)} \rho_1^{y(2)}\right).$ 
\end{abstract}

{\bf Keywords:} 
approximation of probabilities of rare events, 
exit probabilities, 
constrained random walks, 
queueing systems, 
large deviations

\section{Introduction}
This work concerns the random walk 
$X$ with independent and identically distributed increments 
$\{I_1,I_2,I_3,...\}$,
constrained to remain in ${\mathbb Z}_+^2$:
\begin{align*}
X_0 &=x \in {\mathbb Z}_+^2, ~~~X_{k+1} \doteq X_k + \pi ( X_k,I_k ), k=1,2,3,...\\
\pi(x,v) &\doteq \begin{cases} v, &\text{ if } x +v  \in {\mathbb Z}^2_+, \\
				  0      , &\text{otherwise,}
\end{cases} 
\end{align*}
\begin{align*}
&I_k \in \{(1,0),(-1,0), (0,1), (0,-1)\},
P(I_k = (1,0)) = \lambda_1,\\
&P(I_k = (0,1)) = \lambda_2,
P(I_k = (-1,0)) = \mu_1,
P(I_k = (0,-1)) = \mu_2.
\end{align*}
The dynamics of $X$ are depicted in Figure \ref{f:dyn2tex}.

\begin{figure}[h]
\begin{center}
\scalebox{0.6}{
\centerline{\input{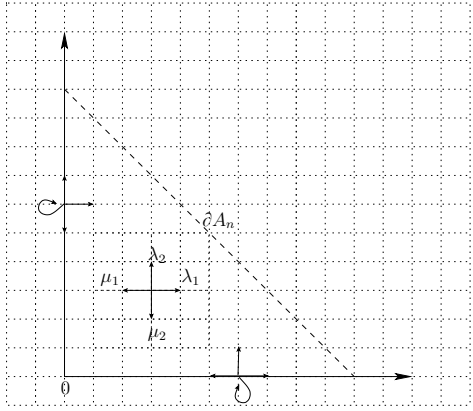}}}
\end{center}

\vspace{-0.65cm}
\caption{\hspace{0.25cm}Dynamics of $X$\label{f:dyn2tex}}
\end{figure}

We denote the constraining boundaries by
$\partial_i \doteq \{ x \in {\mathbb Z}^2: x(i) = 0 \}$, $i=1,2$.
A well known interpretation for $X$ is as the embedded random walk of two parallel queues with Poisson arrivals and independent
and exponentially distributed service times. This random walk appears 
in computer science as a model of two 
stacks running together \cite{knuth1972art,yao1981analysis,flajolet1986evolution,maier1991colliding}.
Define the region
\begin{equation}\label{e:defA}
A_n = \left\{ x \in {\mathbb Z}_+^2: x(1) + x(2) \le n \right\}
\end{equation}
\index{$A_n$}
and its boundary
\begin{equation}\label{e:defBoundaryA}
\partial A_n =
  \left\{ x \in {\mathbb Z}_+^2: x(1) + x(2) = n \right\}.
\end{equation}
Let $\tau_n$ be the first time $X$ hits $\partial A_n$:
\begin{equation}\label{d:taun}
\tau_n \doteq \inf\{k: X_k \in \partial A_n\}.
\end{equation}
When $X$ is stable, i.e., when $\lambda_i  <\mu_i$, a well known performance measure
associated with $X$ is the probability
$p_n \doteq P_x( \tau_n < \tau_0)$; if the queues
share a common buffer, then $p_n$ is the probability that this buffer overflows during
the system's first busy cycle. The stability assumption implies that $p_n$ decays exponentially
in $n$, when the walk starts away from the exit boundary $\partial A_n$. 
Hence $\{\tau_n < \tau_0\}$ is a rare event for $n$ large.
There is no closed form formula for $p_n$ in terms of the parameters of the problem; the approximation of probabilities of the type $p_n$ and of related probabilities and expectations for constrained processes has been a challenge for many years and there is a wide literature on this subject using techniques including large deviations analysis and rare event simulation, see, \cite{ alanyali1998large, aldous2013probability, anantharam, asmussen2007stochastic, asmussen2008applied, atar1999large,  blanchet2006efficient, blanchet2009lyapunov, blanchet2009rare, blanchet2013optimal, BoerNicola02, borovkov2001large, Changetal, collingwood2011networks, comets2007distributed, comets2009large, dai2011reflecting, de2006analysis, dean2009splitting, dieker2005asymptotically, duphui-is1, dupuis1995large, flajolet1986evolution, foley2005large, foley2012constructing, frater1991optimally, guillotin2006dynamic, Iglehart74, ignatiouk2010martin, ignatiouk2000large, ignatyuk1994boundary, juneja2006rare, JunejaNicola, JunejaRandhawa, KDW, kobayashi2013revisiting, KroeseNicola, kurkova1998martin, louchard1991probabilistic, louchard1994random, maier1991colliding, maier1993large, majewski2008large,mcdonald1999asymptotics, miretskiy2008state, miyazawa2009tail, miyazawa2011light, MR1110990, MR1335456, MR1736592, ney1987markov, nicola2007efficient, parekh1989quick, ridder2009importance, Rubetal04, rubino2009rare, sezer2009importance, sezer-asymptotically, yao1981analysis, yeniDW}. 

The goal of the present work is to prove, for the constrained
random walk $X$ defined above, that the approximation approach used in
\cite{sezer2015exit,tandemsezer} gives approximations of $p_n$ with exponentially decaying relative error
 when the initial position $x$ of the random walk is off the boundary $\partial_1.$
The work \cite[Section 6]{tandemsezer} reviews many of the works cited above and points out the relations between the approach of the current paper and \cite{sezer2015exit, tandemsezer} 
and the range of methods and approaches used in these works.  

The approximation technique and the proof method
are reviewed below (see subsection \ref{ss:summary}). Although the main approach of the present work is parallel to  that of
\cite{sezer2015exit, tandemsezer}, new challenges and ideas appear in the treatment of the
present case; there are also differences in the assumptions
made and the results obtained.
These are reviewed in Section \ref{s:compare}.

First, several definitions; the utilization rates of the nodes are:
\[
\rho_i = \frac{\lambda_i}{\mu_i}, i=1,2.
\]
We assume that $X$ is stable, i.e.,
\[
\rho_1, \rho_2 < 1.
\]

The following quantity plays a central role in our analysis:
\[
r = \frac{\lambda_1 + \lambda_2}{\mu_1 + \mu_2}.
\]
Without loss of generality we can assume
\begin{equation}\label{as:orderrho}
\rho_2 \le r \le \rho_1
\end{equation}
(if this doesn't hold, rename the nodes). 
We will make two further technical assumptions:
\begin{equation}\label{as:tech}
\rho_1 \neq \rho_2, \frac{r^2}{\rho_2} < 1.
\end{equation}
The first of these is needed in the construction of the $Y$-harmonic
functions in Section \ref{s:Yharmonic}, see \eqref{d:boldh}.
The second is useful
both in the computation of $P_y(\tau < \infty)$ (see the proof of
Proposition \ref{p:explicit}) and in the limit analysis
(see the proof of Proposition \ref{p:laplace}). We further
comment on these assumptions in the Conclusion (Section \ref{s:conclusion}).

Define
the linear transformation
\[
{\mathcal I} \doteq \left( \begin{matrix}  -1 & 0 \\
                                             0  & 1
			      \end{matrix}
\right)
\]
and the affine transformation
\[
T_n = n e_1 + {\mathcal I}
\]
where $(e_1,e_2)$ is the standard basis for ${\mathbb R}^2.$
Furthermore, define the constraining map
\[
\pi_1(x,y) = 
\begin{cases} y, &\text{ if } x + y \in {\mathbb Z}\times {\mathbb Z}_+,\\
	      0, &\text{otherwise.}
\end{cases}
\]
Define $Y$ to be a constrained random walk on 
${\mathbb Z} \times {\mathbb Z}_+$ 
with increments 
\begin{equation}\label{d:Jk}
J_k \doteq {\mathcal I} I_k:
\end{equation}
\[
Y_{k+1} = Y_k  + \pi_{1}(Y_k, J_k).
\]
$Y$ has  the same
increments as $X$, but the probabilities of the increments $e_1$ and $-e_1$ are reversed.
Define 
\[
\partial B  \subset {\mathbb Z}\times {\mathbb Z}_+, \partial B \doteq \left\{y : y(1) = y(2) \right\}, 
\]
and the hitting time $\tau \doteq \inf\left\{k: Y_k \in \partial B \right\}.$

\subsection{Summary of our analysis}\label{ss:summary}
\cite[Proposition 3.1]{sezer2015exit} asserts,
in a more general framework than the model given above,
that for any $y \in {\mathbb Z}_+^2$, $y(1) > y(2)$,
$P_{T_n(y)}(\tau_n < \tau_0) \rightarrow P_y(\tau < \infty)$.
The approximation idea connecting these two probabilities is
shown in Figure \ref{f:trntex}: by applying $T_n$,
we move the origin of the coordinate
system to $(n,0)$ and take limits, which leads to the limit problem
of computing $P_y(\tau < \infty)$ where the limit $Y$ process is
the same process as $X$ (observed from the point $(n,0)$)
but not constrained on 
$\partial_1.$

\begin{figure}[h]
\begin{center}
\scalebox{0.45}{
\centerline{\input{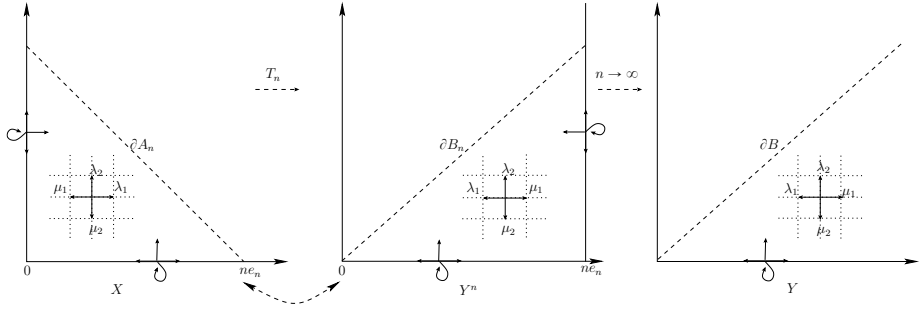}}}
\end{center}

\vspace{-0.65cm}
\caption{\hspace{0.25cm}Transformations and the limit problem\label{f:trntex}}
\end{figure}

A more interesting
convergence analysis is when the initial point is given in
$x$ coordinates. A convergence analysis from this point of view has only
been performed so far for the constrained random walk with increments
$(1,0)$, $(-1,1)$ and $(0,-1)$ (representing two tandem queues) 
in \cite{sezer2015exit, tandemsezer}. The goal of the present work is to extend this analysis to
the simple random walk $X$.
Our main result is the following theorem:
\begin{theorem*}[Theorem \ref{t:mainapprox}]
For any
$x \in {\mathbb R}_+^2$,
$x(1) + x(2) < 1$, $x(1) > 0$,
there exists $C_7 > 0$ and $N > 0$ such that
\[
\frac{ |P_{x_n}(\tau_n < \tau_0) - P_{T_n(x_n)}(\tau <  \infty)|}{
P_{x_n}(\tau_n < \tau_0)} < e^{-C_7 n}
\]
for $n > N$, where $x_n = \lfloor x n \rfloor$.
\end{theorem*}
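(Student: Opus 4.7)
The first step is to use the affine transformation $T_n$ to rephrase the exit problem for $X$ as a hitting problem in a coordinate system where $(n,0)$ plays the role of the origin. Setting $\tilde X_k \doteq T_n(X_{k \wedge \tau_n \wedge \tau_0})$, one checks that $\tilde X$ starts at $T_n(x_n) = (n - x_n(1),\, x_n(2))$, its increments have the same distribution as those of $Y$, it is reflected at $\tilde X(2) = 0$ exactly as $Y$ is, and it carries an additional upper wall at $\tilde X(1) = n$. Moreover, $X$ hitting $\partial A_n$ corresponds to $\tilde X$ hitting $\partial B$ and $X$ hitting the origin corresponds to $\tilde X$ hitting the single point $(n,0)$. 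Consequently
\[
P_{x_n}(\tau_n < \tau_0) = P_{T_n(x_n)}\bigl(\tilde\tau_{\partial B} < \tilde\tau_{(n,0)}\bigr),
\]
and the theorem reduces to comparing this quantity with $P_{T_n(x_n)}(\tau < \infty)$.

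Next, I would couple $\tilde X$ and $Y$ started at the same point by driving both with a common i.i.d.\ increment sequence and letting them agree up to the first time $\sigma$ at which $\tilde X$ attempts a step out of the strip $\{y(1) \le n\}$. Before $\sigma \wedge \tilde\tau_{\partial B} \wedge \tilde\tau_{(n,0)}$ the two processes coincide, so the difference
\[
P_{T_n(x_n)}\bigl(\tilde\tau_{\partial B} < \tilde\tau_{(n,0)}\bigr) - P_{T_n(x_n)}(\tau < \infty)
\]
decomposes into two pieces: (i) trajectories on which $\tilde X$ touches the upper wall $\{y(1) = n\}$ before $\partial B$, and (ii) trajectories on which $Y$ visits the specific point $(n,0)$ before $\partial B$. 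Both pieces are governed by events for the $Y$-process started at $T_n(x_n)$, namely the event that $Y$ visits $\{y(1) \ge n\}$ before $\partial B$, and the event that $Y$ visits $(n,0)$ before $\partial B$.

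The key estimate is to show that each of these probabilities is exponentially smaller than $P_{T_n(x_n)}(\tau < \infty)$ itself. For this I would use the $Y$-harmonic function $\mathbf h$ constructed in Section \ref{s:Yharmonic}: since $\mathbf h$ is a finite sum of exponential functions tied to single and conjugate points on the characteristic surface of $Y$ and approximates $P_y(\tau < \infty)$ with bounded relative error, it serves as a test function in an optional stopping argument. Evaluating $\mathbf h$ along $\{y(1) = n\}$ and at $(n,0)$, and comparing with $\mathbf h(T_n(x_n))$, the exponents pick up a strict gain proportional to $x(1) n$; combining this with the strong Markov property at the hitting time of the upper wall or of $(n,0)$ converts the gain into the factor $e^{-C_7 n}$ in the theorem. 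The bounded-relative-error property then converts estimates in terms of $\mathbf h$ into estimates in terms of $P_{\cdot}(\tau < \infty)$.

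The main obstacle is pinning down this strict exponential gap. One must verify that the LDP rate for $Y$ starting at $T_n(x_n)$ to first reach $\{y(1)=n\}$ or $(n,0)$ before $\partial B$ strictly exceeds the rate for hitting $\partial B$ alone, with a difference that scales linearly in $n$ and depends quantitatively on $x(1)$. This requires a careful analysis of the geometry of the characteristic surface and of the location of the conjugate points appearing in $\mathbf h$, and is the step where the hypotheses $x(1)>0$, $\rho_1\neq\rho_2$, and $r^2<\rho_2$ all enter. Once this gap is established, the combination of the coupling decomposition, the optional stopping bounds via $\mathbf h$, and the bounded-relative-error approximation delivers the exponentially vanishing relative error claimed.
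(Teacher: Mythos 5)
Your overall strategy of comparing the constrained process (with the extra wall) to the unconstrained limit process on a coupling, and then arguing that the error is exponentially smaller than the main probability, is indeed the right shape, and your first step (rephrasing $P_{x_n}(\tau_n<\tau_0)$ via $T_n$) is correct. However, there is a genuine gap in the core estimate, and a misstatement in the setup.

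First, the setup. You describe the key rare events as ``$\tilde X$ touches the upper wall $\{y(1)=n\}$ before $\partial B$'' and ``$Y$ visits $(n,0)$ before $\partial B$.'' Neither of these is rare. Under the $T_n$-transformation, $Y$'s first-coordinate increments have probabilities $\mu_1$ and $\lambda_1$ reversed relative to $X$, so $Y(1)$ drifts to $+\infty$; reaching $\{y(1)\ge n\}$ before $\partial B$ happens with probability bounded away from $0$. The exponentially small quantity that controls the discrepancy is the \emph{joint} event: hitting the wall first \emph{and then also} reaching $\partial B$ (equivalently, in $x$-coordinates, $\{\sigma_1<\tau_n<\tau_0\}$ for $X$ and $\{\bar\sigma_1<\bar\tau_n<\infty\}$ for $\bar X = T_n(Y)$). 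This is the decomposition the paper uses in \eqref{e:fas1}.

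Second, and more importantly, the mechanism you propose for the gap — optional stopping with ${\bm h}_r$ and the strong Markov property at the wall-hitting time — does not deliver the rate. Optional stopping with ${\bm h}_r$ at $\sigma_1\wedge\tau$ gives only ${\mathbb E}_y[{\bm h}_r(Y_{\sigma_1})1_{\{\sigma_1<\tau\}}]\le {\bm h}_r(y)\approx P_y(\tau<\infty)$, which bounds the error by the main probability, not by something exponentially smaller. The rate of the joint event $\{\sigma_1<\tau<\infty\}$ is $V_\sigma(0,x)$, and its strict excess over $V(x)$ when $x(1)>0$ is produced not by the gradient behind ${\bm h}_r$ (which is essentially $r_3=\log r\,(1,1)$) but by the additional gradient $r_4=(\log(\rho_1/r),\log r)$ of \eqref{d:r4}, which enters only through the two-piece subsolution $V(\cdot,\cdot,i)$ of Section \ref{s:LD2}. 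In other words, a single static harmonic function cannot see the two-stage structure of the event. The paper itself flags exactly this in Section \ref{s:compare}: for the tandem walk one \emph{can} bound the analogous error probability directly from the explicit formula for $P_y(\tau<\infty)$, but for the parallel walk that shortcut fails and the subsolution machinery is required.

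Concretely, the paper's proof of Theorem \ref{t:mainapprox} combines three ingredients you would need to reproduce: (a) the two-stage supermartingale $M^{(n,\epsilon,\sigma)}$ built from the two-piece smoothed subsolution \eqref{d:defVsmooth2s}, giving $P_{x_n}(\sigma_1<\tau_n<\tau_0)\le e^{-n(V_\sigma(0,x)-\epsilon)}$ (Proposition \ref{p:twostageboundX}) and, after replacing the time-truncation input, the analogous bound for $P_{x_n}(\bar\sigma_1<\bar\tau_n<\infty)$ (Proposition \ref{p:twostageboundbarX}); (b) the Laplace-transform bound ${\mathbb E}_y[z_0^\tau 1_{\{\tau<\infty\}}]<\infty$ of Proposition \ref{p:laplace} — established via $Y$-$z$-harmonic functions on $1/z$-level characteristic surfaces — which is precisely what lets one truncate the unbounded time $\tau$ in step (a); and (c) the lower bound $P_{x_n}(\tau_n<\tau_0)\ge e^{-n(V(x)-\epsilon)}$ obtained from the explicit $X$-subharmonic function $f_n$ of Proposition \ref{p:subharmonicpn}. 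The final step is simply that $V_\sigma(0,x)-V(x)>0$ when $x(1)>0$, which supplies $C_7$. Your proposal does not contain the two-stage subsolution, nor the Laplace-transform truncation for $\tau$, and these are not details one can backfill from the ${\bm h}$-based optional stopping argument you sketch.
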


Thus, as $n$ increases $P_{T_n(x_n)}(\tau < \infty)$ 
approximates $P_{x_n}(\tau_n < \tau_0)$ very well (with exponentially
decaying relative error in $n$) if $x(1) > 0.$
In the tandem case there is a simple explicit formula for $P_y(\tau < \infty)$.
In the parallel walk
case a simple explicit formula exists under the additional condition
\begin{equation}\label{c:harmonicmean}
\rho_1 \rho_2 = r^2.
\end{equation}
The formula for $P_y(\tau < \infty)$ under this condition is
\begin{equation}\label{e:formula1}
P_y(\tau < \infty) = r^{y(1)-y(2)} + \frac{(1-r)r}{r-\rho_2}
\left( \rho_1^{y(1)} - r^{y(1)-y(2)} \rho_1^{y(2)} \right).
\end{equation}
This is derived in Proposition \ref{p:explicit} and is based on the
class of $Y$-harmonic functions constructed 
in Section \ref{s:Yharmonic}
from single and conjugate
points on a characteristic surface associated with $Y$.
A generalization of \eqref{e:formula1} can be used 
to find upper and lower bounds for $P_y(\tau < \infty)$ 
when \eqref{c:harmonicmean} doesn't hold, see Propositions \ref{p:relativeerr}
and \ref{p:improvedapprox}. Subsection \ref{ss:finer}
illustrates how one can use these results to construct
finer approximations of $P_y(\tau < \infty)$ with diminishing relative
error using superposition of $Y$-harmonic functions defined by single
and conjugate points on the characteristic surface.

Define the stopping times
\begin{equation}\label{d:sigmas}
\sigma_1 = \inf\{k: X_k \in \partial_1\}, ~~~
\bar{\sigma}_1 = \inf \{k: T_n(Y_k) \in \partial_1 \}.
\end{equation}
If we set the initial position of $Y$  to $Y_0 = T_n(X_0)$, we have
\[
\{ \tau_n < \tau_0\} \cap \{ \tau_n < \sigma_1 \wedge \tau_0 \} = 
\{\tau < \infty\} \cap \{\tau < \bar{\sigma}_1 < \infty\}.
\]
The main argument in the proof of Theorem \ref{t:mainapprox} 
is this: most of the probability of the events
$\{ \tau_n < \tau_0\}$ and  $\{\tau < \infty\}$ come from the events 
$\{ \tau_n < \sigma_1 \wedge \tau_0 \}$ and  $\{\tau < \bar{\sigma}_1 < \infty\}$ respectively, if the initial position
$X_0$ of $X$ is away from $\partial_1.$ The full implementation
of this argument will require the following steps:
\begin{enumerate}
\item Construction of $Y$-harmonic functions, $Y-z$ harmonic
functions and  bounds on 
${\mathbb E}_{y}[z^{\tau} 1_{\{ \tau < \infty\}}]$ for $z > 1$ 
(Sections \ref{s:Yharmonic} and \ref{s:laplace}),
\item Large deviations (LD) analysis of $P_{x_n}(\tau_n < \tau_0)$ (Section \ref{s:LD1}),
\item LD analysis of $P_{x_n}( \sigma_1 < \tau_n < \tau_0)$ (Section \ref{s:LD2}),
\item LD analysis of $P_{x_n} (\bar{\sigma}_1 < \tau < \infty)$ 
(Subsection \ref{ss:LD3}).
\end{enumerate}
These steps are put together in Section \ref{s:together}. 
Section \ref{s:Pty} treats the problem of computing
$P_y(\tau < \infty)$ from the $Y$-harmonic functions of Section \ref{s:Yharmonic}.
Section \ref{s:compare} points out the parallels and differences between
the analysis of the constrained walk $X$ treated in the present
work and the tandem walk treated in 
\cite{sezer2015exit,tandemsezer}. 
We comment on future work in the conclusion
(Section \ref{s:conclusion}).

\section{Harmonic functions of $Y$}\label{s:Yharmonic}
A function $h$ on ${\mathbb Z}\times {\mathbb Z}_+$ 
is said to be $Y$-harmonic if
\[
{\mathbb E}_y[h(Y_1)]= h(y), y \in {\mathbb Z}\times {\mathbb Z}_+.
\]
Following \cite{sezer2015exit}, introduce the 
the interior characteristic polynomial of $Y$:
\[
{\bm p}(\beta,\alpha) \doteq
 \lambda_1\frac{1}{\beta}+ \mu_1\beta 
+  \lambda_2 \frac{\alpha}{\beta} +
\mu_2 \frac{\beta}{\alpha}.
\]
and  characteristic polynomial of $Y$ on $\partial_1$:
\[
{\bm p}_1(\beta,\alpha) \doteq
 \lambda_1\frac{1}{\beta}+ \mu_1\beta 
+  \lambda_2 \frac{\alpha}{\beta} +
\mu_2 
\]
As in \cite{sezer2015exit}, 
we will construct $Y$-harmonic functions from solutions of ${\bm p} = 1$; 
the set of all solutions of this
equation defines the characteristic surface
\[
{\mathcal H} \doteq \{ (\beta,\alpha) \in {\mathbb C}^2: 
{\bm p}(\beta,\alpha) =1  \},
\]
define, similarly, the characteristic surface for $\partial_1$:
\[
{\mathcal H}_1 \doteq \{ (\beta,\alpha) \in {\mathbb C}^2: 
{\bm p}_1(\beta,\alpha) =1  \},
\]

Multiplying both sides of ${\bm p}=1$ by $\alpha$
 transforms it to the quadratic equation
\begin{equation}\label{e:pasalpha}
\alpha\left( \lambda_1\frac{1}{\beta}+ \mu_1\beta -1\right)
+  \lambda_2 \frac{\alpha^2}{\beta} +
\mu_2 \beta  = 0,
\end{equation}
Define
\begin{equation}\label{e:conjugator}
\boldsymbol \alpha(\beta,\alpha) \doteq 
\frac{1}{\alpha}\frac{\beta^2}{\rho_2};
\end{equation}
if for a fixed $\beta$, $\alpha_1$ and $\alpha_2$ are distinct roots of \eqref{e:pasalpha},
they will satisfy 
\[
\alpha_2  =\boldsymbol \alpha(\beta,\alpha_1)
\]
by simple algebra;
we will call the points $(\beta,\alpha_1) \in {\mathcal H}$ and $(\beta,\alpha_2) \in  {\mathcal H}$ 
arising from such roots conjugate.
Following \cite{sezer2015exit} we refer to the function $\boldsymbol \alpha$ as the conjugator.
An example of two conjugate points are shown in Figure \ref{f:charsurf0}. 

\begin{figure}[h]
\centering
\scalebox{0.6}{
\input{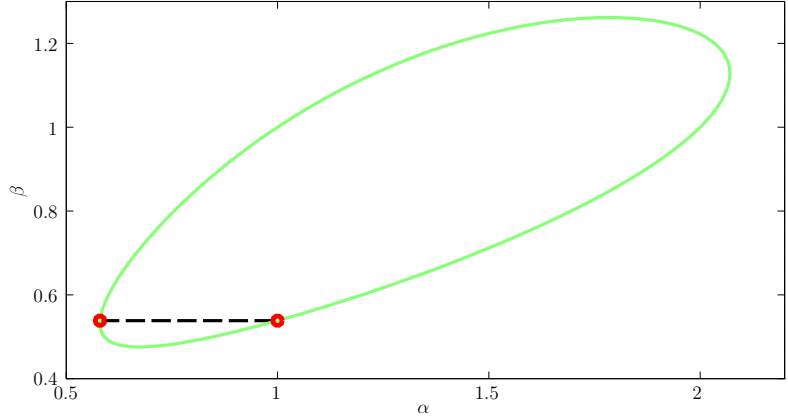}
}
\vspace{-1.5cm}
\caption{The real section of the characteristic surface ${\mathcal H}$
for $\lambda_1 =0.15$, $\lambda_2 = 0.2$,  $\mu_1 = 0.25$, $\mu_2 = 0.4$;
the end points of the dashed line are an example
of a pair of conjugate points $(\beta,\alpha_1)$
and $(\beta,\alpha_2)$; 
Each such pair defines a $Y$-harmonic function,
see Proposition \ref{p:conjugate}}
\label{f:charsurf0}
\end{figure}

For any point $(\beta, \alpha) \in {\mathcal H}$ define the following 
${\mathbb C}$-valued function on ${\mathbb Z}^2$:
\[
z \mapsto [(\beta,\alpha), z], z \in {\mathbb Z}^2,
\]
\[
[(\beta,\alpha),z] \doteq \beta^{z(1)-z(2)} \alpha^{z(2)}.
\]

\begin{lemma}\label{l:interior0}
$[(\beta,\alpha),\cdot]$ is $Y$-harmonic on ${\mathbb Z}\times {\mathbb Z}_+ - \partial_2$
when $(\beta,\alpha) \in {\mathcal H}$. In addition $ x\mapsto [(\beta,\alpha), T_n(x) ]$, $x
\in {\mathbb Z}^2_+$, is $X$-harmonic on ${\mathbb Z}_+^2 - \partial_1 \cup \partial_2.$
\end{lemma}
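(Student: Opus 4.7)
The plan is to verify both harmonicity statements by direct computation, using the defining equation ${\bm p}(\beta,\alpha) = 1$ of the characteristic surface. Both statements reduce to the same algebraic identity; the role of the set restrictions is simply to ensure that every jump is feasible at the point under consideration, so that the one-step expectation is genuinely a weighted sum of the four shifted values of $h$ (rather than the unconstrained dynamics being replaced by the constrained one on a boundary face).

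For the first assertion, I would fix $y \in {\mathbb Z}\times {\mathbb Z}_+ - \partial_2$ and write $h(z) = [(\beta,\alpha),z] = \beta^{z(1)-z(2)}\alpha^{z(2)}$. Since $y(2) \ge 1$, all four increments $J_k \in \{(1,0),(-1,0),(0,1),(0,-1)\}$ are feasible, so $\pi_1(y,J_k) = J_k$ almost surely. Recalling from \eqref{d:Jk} that $P(J_k = (-1,0)) = \lambda_1$, $P(J_k = (1,0)) = \mu_1$, $P(J_k = (0,1)) = \lambda_2$, $P(J_k = (0,-1)) = \mu_2$, a direct check of the four shifts gives
\[
\frac{h(y + (-1,0))}{h(y)} = \beta^{-1},~~ \frac{h(y + (1,0))}{h(y)} = \beta,~~ \frac{h(y + (0,1))}{h(y)} = \frac{\alpha}{\beta},~~ \frac{h(y + (0,-1))}{h(y)} = \frac{\beta}{\alpha}.
\]
Summing the four terms against their respective probabilities factors $h(y)$ out, and the bracket that remains is precisely ${\bm p}(\beta,\alpha)$. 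Since $(\beta,\alpha) \in {\mathcal H}$, this equals $1$, giving ${\mathbb E}_y[h(Y_1)] = h(y)$.

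For the second assertion, I would use the same computation after observing that $T_n(x) = n e_1 + {\mathcal I} x$ and hence $[(\beta,\alpha),T_n(x)] = \beta^{n-x(1)-x(2)}\alpha^{x(2)}$. For $x \in {\mathbb Z}_+^2 - \partial_1\cup\partial_2$, every $X$-increment is feasible (so the constraining map $\pi$ is the identity), and the four ratios of shifted values become $\beta^{-1}$ for $(1,0)$, $\beta$ for $(-1,0)$, $\alpha/\beta$ for $(0,1)$, $\beta/\alpha$ for $(0,-1)$. Weighting these by $\lambda_1,\mu_1,\lambda_2,\mu_2$ respectively reproduces ${\bm p}(\beta,\alpha)$ once again, which equals $1$ on ${\mathcal H}$.

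There is no real obstacle here; the whole proof is the observation that the action of ${\mathcal I}$ in the definition of $Y$ is exactly what makes the same polynomial ${\bm p}$ serve both processes, so that a single computation handles both parts. The only thing I would be careful to flag is why the excluded set on each side is exactly what appears: on $\partial_2$ the $Y$-increment $(0,-1)$ is suppressed by $\pi_1$, so the identity for ${\mathbb E}_y[h(Y_1)]$ would pick up an error term $\mu_2(1 - \beta/\alpha) h(y)$ that vanishes only when $\alpha = \beta$; analogously, on $\partial_1\cup\partial_2$ the corresponding increments of $X$ are suppressed, explaining the restriction in the second statement. This also foreshadows why the boundary characteristic polynomial ${\bm p}_1$ is introduced and why subsequent $Y$-harmonic functions must be built as combinations of terms of the form $[(\beta,\alpha),\cdot]$ rather than single such terms.
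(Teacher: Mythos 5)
Your proof is correct and is essentially the paper's argument: the paper likewise factors the one-step expectation as $\beta^{z(1)-z(2)}\alpha^{z(2)}{\bm p}(\beta,\alpha)$ and invokes ${\bm p}=1$ on ${\mathcal H}$, deducing the $X$-statement from $J_k = {\mathcal I}I_k$ exactly as you do by transporting the same four ratios through $T_n$. Your closing remark about the residual term $\mu_2\,(1-\beta/\alpha)\,\beta^{y(1)}$ on $\partial_2$ is also consistent with the computation the paper carries out later in the proof of Proposition \ref{p:conjugate}.
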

\begin{proof}
As in \cite{sezer2015exit}, the first claim follows from the definitions involved:
\[
{\mathbb E}_z[ (\beta,\alpha), Z_1 ] = \beta^{z_1 -z_2} \alpha^{z_2} {\bm p}(\beta,\alpha)
= [ (\beta,\alpha), z].
\]
and the second
claim follows from the first and the fact that $J_k = {\mathcal I} I_k$ 
(see \eqref{d:Jk}).
\end{proof}

Define
\[
C(\beta,\alpha)  \doteq \left( 1- \frac{\beta}{\alpha} \right),
(\beta,\alpha) \in {\mathbb C}^2, \alpha \neq 0.
\]
Proceeding parallel to \cite{sezer2015exit},
one can define the following class of $Y$-harmonic functions from
the functions $[(\beta,\alpha),\cdot]$:
\begin{proposition}\label{p:both}
Suppose $(\beta,\alpha) \in {\mathcal H} \cap {\mathcal H}_1$.
Then $[(\beta,\alpha),\cdot]$ is $Y$-harmonic.
\end{proposition}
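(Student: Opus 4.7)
The plan is to leverage Lemma \ref{l:interior0}, which already supplies $Y$-harmonicity on the interior $\mathbb{Z}\times \mathbb{Z}_+ - \partial_2$ from the single hypothesis $(\beta,\alpha) \in \mathcal{H}$. What remains is therefore to verify the harmonic identity $\mathbb{E}_y[[(\beta,\alpha),Y_1]] = [(\beta,\alpha),y]$ at boundary points $y$ with $y(2) = 0$, which is the only place where the constraining map $\pi_1$ changes the dynamics of $Y$.

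First I would expand $\mathbb{E}_y[[(\beta,\alpha),Y_1]]$ as a sum of four contributions corresponding to the four possible increments of $Y$. Because $J_k = \mathcal{I} I_k$ and $\mathcal{I}(0,-1) = (0,-1)$, the blocked jump on the constraining boundary is the downward increment $(0,-1)$, which carries probability $\mu_2$. The other three increments $(\pm 1,0)$ and $(0,1)$ act as they do in the interior and contribute $\mu_1 \beta$, $\lambda_1/\beta$, and $\lambda_2 \alpha/\beta$ times the common factor $[(\beta,\alpha),y]$; the reflected $(0,-1)$ contributes only $\mu_2\cdot [(\beta,\alpha),y]$ (with no $\beta/\alpha$ factor, since $Y$ stays at $y$). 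Factoring out $[(\beta,\alpha),y]$ yields exactly $[(\beta,\alpha),y]\cdot {\bm p}_1(\beta,\alpha)$, the polynomial defined just above the proposition.

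The hypothesis $(\beta,\alpha) \in \mathcal{H}_1$ is precisely ${\bm p}_1(\beta,\alpha) = 1$, which collapses this expression to $[(\beta,\alpha),y]$, finishing the verification on the constraining boundary and, combined with Lemma \ref{l:interior0}, on all of $\mathbb{Z}\times \mathbb{Z}_+$. There is no substantive obstacle here; the only point requiring care is the bookkeeping of increments under the involution $\mathcal{I}$, in particular noting that $\mathcal{I}$ swaps only the first-coordinate jumps and fixes the second-coordinate ones, so that the probabilities $\mu_2$ and $\lambda_2$ attached to the vertical increments of $Y$ coincide with those of $X$ and produce precisely the coefficients appearing in ${\bm p}_1$.
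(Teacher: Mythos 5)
Your proposal is correct and follows the same route as the paper: Lemma \ref{l:interior0} handles the interior, and the boundary $\partial_2$ is settled by the direct one-step computation whose blocked $(0,-1)$ jump turns ${\bm p}$ into ${\bm p}_1$, so that $(\beta,\alpha)\in{\mathcal H}_1$ gives harmonicity there. The paper merely states this boundary step as ``an argument parallel to the proof of Lemma \ref{l:interior0}''; you have written out exactly that argument, with the increment bookkeeping under ${\mathcal I}$ done correctly.
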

\begin{proof}
Lemma \ref{l:interior0} says that for $(\beta,\alpha) \in {\mathcal H}$, 
$[(\beta,\alpha),\cdot]$ is $Y$-harmonic on ${\mathbb Z} \times {\mathbb Z}_+ - \partial_2.$
An argument parallel to the one given in the proof of Lemma \ref{l:interior0} 
shows that
$[(\beta,\alpha),\cdot]$ is $Y$-harmonic on $\partial_2$ when $(\beta,\alpha) \in {\mathcal H}_1.$
These two facts imply the statement of the proposition.
\end{proof}

The next proposition gives us another class of $Y$-harmonic functions
constructed from conjugate points on ${\mathcal H}$, it is
a special case of \cite[Proposition 4.9]{sezer2015exit}:
\begin{proposition}\label{p:conjugate}
Suppose $(\beta,\alpha_1) \neq (\beta,\alpha_2)$,  are conjugate points
on ${\mathcal H}$.
Then
\[
h_\beta \doteq C(\beta,\alpha_2) [(\beta,\alpha_1),\cdot]- C(\beta,\alpha_1)
[(\beta,\alpha_2),\cdot]
\]
is $Y$-harmonic.
\end{proposition}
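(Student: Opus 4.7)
My plan is to verify $Y$-harmonicity of $h_\beta$ by splitting ${\mathbb Z}\times {\mathbb Z}_+$ into the interior ${\mathbb Z}\times{\mathbb Z}_+ - \partial_2$ and the boundary $\partial_2$, since that boundary is the only place where the $Y$-dynamics depart from the interior rule. On the interior, Lemma \ref{l:interior0} gives $Y$-harmonicity of both $[(\beta,\alpha_1),\cdot]$ and $[(\beta,\alpha_2),\cdot]$, and linearity of the one-step expectation immediately yields $Y$-harmonicity of $h_\beta$ there.

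The substance of the argument is on $\partial_2$. First I would compute, for a single point $(\beta,\alpha)\in {\mathcal H}$, the boundary defect
\[
D(y) \doteq {\mathbb E}_y\bigl[[(\beta,\alpha),Y_1]\bigr] - [(\beta,\alpha),y], \quad y \in \partial_2.
\]
On $\partial_2$ the increment $(0,-1)$ is reflected to $(0,0)$, contributing a factor $1$ instead of the interior factor $\beta/\alpha$; consequently ${\mathbb E}_y[[(\beta,\alpha),Y_1]] = [(\beta,\alpha),y]\,{\bm p}_1(\beta,\alpha)$. Since $(\beta,\alpha)\in {\mathcal H}$ forces ${\bm p}(\beta,\alpha)=1$, and since direct inspection gives ${\bm p}(\beta,\alpha)-{\bm p}_1(\beta,\alpha) = \mu_2(\beta/\alpha - 1) = -\mu_2 C(\beta,\alpha)$, the defect reduces to $D(y) = \mu_2 C(\beta,\alpha)[(\beta,\alpha),y]$.

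With that formula in hand, I would substitute both conjugate points into $h_\beta$ and observe that its boundary defect on $\partial_2$ collapses to
\[
\mu_2 C(\beta,\alpha_1) C(\beta,\alpha_2)\bigl([(\beta,\alpha_1),y] - [(\beta,\alpha_2),y]\bigr),
\]
which vanishes because $y(2)=0$ on $\partial_2$ forces $[(\beta,\alpha_i),y]=\beta^{y(1)}$ independently of $\alpha_i$. I do not anticipate any real obstacle here; the conceptual content is just that the coefficients $C(\beta,\alpha_2)$ and $-C(\beta,\alpha_1)$ are tuned precisely so that, combined with the $\alpha$-independence of $[(\beta,\alpha),\cdot]$ along $\partial_2$, the two boundary defects cancel exactly and the conjugate-pair construction produces a $Y$-harmonic function.
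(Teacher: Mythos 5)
Your proposal is correct and follows essentially the same route as the paper: interior harmonicity from Lemma \ref{l:interior0} plus linearity, and cancellation of the two boundary defects $\mu_2 C(\beta,\alpha_i)\beta^{y(1)}$ on $\partial_2$, your derivation of the defect via ${\bm p}-{\bm p}_1=-\mu_2 C(\beta,\alpha)$ being just a slightly more explicit version of the paper's ``direct computation.''
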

For sake of completeness and easy reference, let us reproduce the argument given in
the proof of \cite[Proposition 4.9]{sezer2015exit}:
\begin{proof}
That $h_\beta$ is $Y$-harmonic on ${\mathbb Z} \times {\mathbb Z}_+ -\partial_2$
follows from Lemma \ref{l:interior}.  For $y \in \partial_2$
a direct computation gives:
\[
{\mathbb E}_y[[(\beta,\alpha_i), y+\pi_1(y,J_1)]] -[(\beta,\alpha_i),y] = 
\mu_2 C(\beta,\alpha_i) \beta^{y(1)}.
\]
It follows that
\[
{\mathbb E}_y[h_\beta( y+\pi_1(y,J_1)] -h_\beta(y)
= \mu_2C(\beta,\alpha_1)C(\beta,\alpha_2) (\beta^{y(1)} - \beta^{y(1)}) = 0,
\]
i.e., $h_\beta$ is $Y$-harmonic on $\partial_2$ as well.
\end{proof}

The intersection of 
${\mathcal H}$ and ${\mathcal H}_1$ consists
of the points
$(0,0)$, $(1,1)$ and $(\rho_1,\rho_1)$. The last of these gives us our first
nontrivial loglinear $Y$-harmonic function:
\begin{lemma}\label{l:rho1}
$[(\rho_1,\rho_1),\cdot]$ is $Y$-harmonic.
\end{lemma}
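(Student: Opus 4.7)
The plan is to apply Proposition \ref{p:both} directly: it suffices to verify that the point $(\rho_1,\rho_1)$ lies on both characteristic surfaces $\mathcal{H}$ and $\mathcal{H}_1$, i.e., that ${\bm p}(\rho_1,\rho_1)=1$ and ${\bm p}_1(\rho_1,\rho_1)=1$.

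For the interior surface, I would substitute $\beta=\alpha=\rho_1$ into the definition of ${\bm p}$ and use $\rho_1=\lambda_1/\mu_1$ to simplify $\lambda_1/\rho_1=\mu_1$ and $\mu_1\rho_1=\lambda_1$; the two ratios $\alpha/\beta$ and $\beta/\alpha$ both equal $1$, so the value collapses to $\mu_1+\lambda_1+\lambda_2+\mu_2$, which equals $1$ because the four increment probabilities of $I_k$ sum to $1$. The same cancellations immediately give ${\bm p}_1(\rho_1,\rho_1)=\mu_1+\lambda_1+\lambda_2+\mu_2=1$, since ${\bm p}$ and ${\bm p}_1$ differ only in the last term, and at $\alpha=\beta$ this last term is $\mu_2$ in both cases. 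Hence $(\rho_1,\rho_1)\in\mathcal{H}\cap\mathcal{H}_1$.

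Having placed $(\rho_1,\rho_1)$ in the intersection, Proposition \ref{p:both} yields that $[(\rho_1,\rho_1),\cdot]$ is $Y$-harmonic. Observe that this harmonic function simplifies to $[(\rho_1,\rho_1),z]=\rho_1^{z(1)-z(2)}\rho_1^{z(2)}=\rho_1^{z(1)}$, matching the intuition that $\rho_1^{y(1)}$ should be harmonic because the first coordinate of $Y$ behaves like a one-dimensional random walk whose local drift is captured by $\rho_1$. There is no real obstacle here—the entire argument is a one-line substitution followed by invocation of the previously established proposition; the only thing worth emphasizing in the write-up is the slightly surprising fact that the $\mu_2$ contribution from $\partial_2$ is automatically absorbed, which is precisely what makes the point $(\rho_1,\rho_1)$ special among the three intersection points $\{(0,0),(1,1),(\rho_1,\rho_1)\}$ mentioned just above the lemma.
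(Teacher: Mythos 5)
Your proof is correct and takes exactly the same route as the paper: the paper's proof of this lemma is precisely the observation that $(\rho_1,\rho_1)\in\mathcal{H}\cap\mathcal{H}_1$ combined with Proposition \ref{p:both}. The verification by substitution and the remark that $[(\rho_1,\rho_1),\cdot]=\rho_1^{(\cdot)(1)}$ are correct supplementary details.
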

The proof follows from Proposition \ref{p:both} and the
fact that $(\rho_1,\rho_1) \in {\mathcal H} \cap {\mathcal H}_1.$

Fixing $\beta \in {\mathbb C}$ and solving \eqref{e:pasalpha} gives us the two conjugate
points corresponding to $\beta$. It is also natural to start the computation
from a fixed $\alpha$ and find its $\beta$ and its conjugate. For this,
one rewrites ${\bm p} =1$, now as a polynomial in $\beta$:
\begin{equation}\label{e:pasbeta}
\left(\mu_1 + \frac{\mu_2}{\alpha}\right)\beta^2  -\beta + \lambda_1 + \lambda_2 \alpha = 0.
\end{equation}
For $\alpha$ fixed, the roots of \eqref{e:pasbeta} are
\begin{equation}\label{e:betas}
\beta_1(\alpha) = \frac{1 - \sqrt{\Delta(\alpha)}}{2\left( \frac{\mu_2}{\alpha} + \mu_1 \right)},~~
\beta_2(\alpha) = \frac{1 + \sqrt{\Delta(\alpha)}}{2\left( \frac{\mu_2}{\alpha} + \mu_1 \right)},
\end{equation}
where
\[
\Delta(\alpha) = 1 - 4\left(\frac{\mu_2}{\alpha} + \mu_1\right)(\lambda_1  + \lambda_2 \alpha), 
\]
and for $z \in {\mathbb C}$, $\sqrt{z}$ is the square root of $z$ satisfying $\Re({\sqrt z}) \ge 0.$

The function $y \mapsto P_y( \tau < \infty)$ takes the value $1$ on $\partial B$; therefore,
of special significance to us is the solution of \eqref{e:pasbeta} with $\alpha=1$. The roots
\eqref{e:betas} for $\alpha=1$ are
\[
\beta_1 = r, \beta_2 = 1.
\]
That $r  \le \rho_1 < 1$ implies
$C(r,1) = (1- r) \neq 0$.
The assumption $\rho_1  \neq \rho_2$ implies
\begin{equation*}
C(r,\bm\alpha(r,1)) = 1-\rho_2/r \neq 0.
\end{equation*}
Therefore, by Proposition \ref{p:conjugate}, the root $\beta_1=r$ above defines the
$Y$-harmonic function
\begin{align*}
h_r &= C(r,\bm\alpha(r,1)) [(r,1),\cdot]  - C(r,1)[(r,\bm\alpha(r,1)),\cdot],\\
 & =(1-\rho_2/r) [(r,1),\cdot]  - (1-r)[(r,r^2/\rho_2),\cdot].
\end{align*}

For this function to be useful in our analysis, 
we need $r^2/\rho_2 < 1$ (see Proposition \ref{p:pbdetermined}),
therefore, we assume:
\begin{equation}\label{as:alphaconj}
\frac{r^2}{\rho_2} < 1.
\end{equation}
Finally, the following scalar multiple of $h_r$ is frequently used
in the calculations, therefore, we will denote it in bold thus:
\begin{equation}\label{d:boldh}
{\bm h}_r = \frac{1}{1-\rho_2/r} h_r = 
 [(r,1),\cdot]  - \frac{1-r}{1-\rho_2/r}[(r,r^2/\rho_2),\cdot];
\end{equation}
the assumption 
$\rho_1 \neq \rho_2$ ensures that the denominator
$1-\rho_2/r$ is nonzero.

\section{Laplace transform of $\tau$}\label{s:laplace}
To bound approximation errors we will have to argue that we can
truncate time without losing much probability. For this, it will
be useful to know that there exists $z > 1$ such that
\begin{equation}\label{e:exponentialbound}
{\mathbb E}_y\left[ z^\tau 1_{\{\tau < \infty \}}\right] < \infty.
\end{equation}
In \cite{sezer2015exit, DSW}, bounds similar to this are obtained
using large deviations arguments, which are based on the ergodicity
of the underlying chain. In \cite{setayeshgar2013efficient}, again a similar bound is
obtained invoking the geometric ergodicity of the underlying process.
The process underlying \eqref{e:exponentialbound} is not stationary. For this reason,
these arguments do not immediately generalize to the analysis of
\eqref{e:exponentialbound}. To prove the existence of $z >1$ such
that \eqref{e:exponentialbound} holds, we will extend the
characteristic surface an additional dimension
 to include a new parameter; 
points on the generalized surface will correspond to discounted 
(in our case we are in fact interested in inflated costs)
expected cost functions of the process $Y$, i.e., points on this
surface will give us functions of the form
\[
{\mathbb E}_y\left[ z^{\tau} g(\tau) 1_{\{\tau < \infty\}} \right].
\]
We will use these functions to find our desired $z$.

\subsection{ $1/z$-level characteristic surfaces and 
$Y$-$z$-harmonic functions}
The development in this subsection is parallel to Section
\ref{s:Yharmonic} with an additional variable $z \in {\mathbb C}$.
A function $h$ on ${\mathbb Z}\times {\mathbb Z}_+$ is said to be 
$Y-z$-harmonic if
\[
z {\mathbb E}_y[ h(Y_1)]= h(y), y \in {\mathbb Z}\times {\mathbb Z}_+.
\]
As before, let ${\bm p}$ denote the characteristic polynomial
of $Y$; 
the set of all solutions of the
equation 
$z{\bm p} = 1$
defines the $1/z$-level characteristic surface
\[
{\mathcal H}^z \doteq \{ (\beta,\alpha) \in {\mathbb C}^2: 
z{\bm p}(\beta,\alpha) =1  \}
\]
Similarly, define
\[
{\mathcal H}^z_1 \doteq \{ (\beta,\alpha) \in {\mathbb C}^2: 
z{\bm p}_1(\beta,\alpha) =1  \},
\]
the $1/z$-level characteristic surface on $\partial_1.$
These surfaces reduce to the ordinary characteristic surfaces 
when $z=1$.

Multiplying both sides of $z{\bm p}=1$ by $\frac{\alpha}{z}$
transforms it to the quadratic (in $\alpha$) equation
\[
\alpha  \left( \lambda_1\frac{1}{\beta}+ \mu_1\beta -\frac{1}{z}\right)
+  \alpha^2 \frac{\lambda_2 }{\beta} +
\mu_2 \beta  = 0,
\]
whose discriminant is
\[
\Delta_z(\beta) \doteq 
\left( \lambda_1 \frac{1}{\beta} + \mu_1 \beta - \frac{1}{z}\right)^2
- 4 \lambda_2 \mu_2.
\]
Let $\bm \alpha$ be the conjugator defined in \eqref{e:conjugator}.
If $(\beta,\alpha_1) \in {\mathcal H}_z$ and $\alpha \neq 0$
then
 $(\beta,\alpha_2,z) \in  {\mathcal H}_z$ 
for $\alpha_2  =\boldsymbol \alpha(\beta,\alpha_1)$; 
if $\Delta_z(\beta) \neq 0$
$(\beta,\alpha_1)$ and $(\beta,\alpha_2)$ will be distinct points on
${\mathcal H}_z$ and we will call them conjugate.

\begin{lemma}\label{l:interior}
$[(\beta,\alpha),\cdot]$ is $Y-z$
 harmonic on ${\mathbb Z}\times {\mathbb Z}_+ - \partial_2$
when $(\beta,\alpha) \in {\mathcal H}_z$. 
In addition $ x\mapsto [(\beta,\alpha), T_n(x) ]$, $x
\in {\mathbb Z}^2_+$, is $X-z$-harmonic 
on ${\mathbb Z}_+^2 - \partial_1 \cup \partial_2.$
\end{lemma}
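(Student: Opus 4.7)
The plan is to mimic the proof of Lemma \ref{l:interior0} almost verbatim, carrying the extra factor of $z$ through the computation. The key observation is that, for a point $y \in \mathbb{Z} \times \mathbb{Z}_+ - \partial_2$, the second coordinate of $y$ is strictly positive, so the constraining map $\pi_1$ acts trivially and the $Y$-increment $J_1$ is applied freely; analogously, for $x \in \mathbb{Z}_+^2 - \partial_1 \cup \partial_2$, $X$ makes an unconstrained jump $I_1$. Thus in both cases we only need to compute an ordinary (unconstrained) one-step expectation.

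For the first claim, I would enumerate the four possible values of $J_1$ and use the identity $[(\beta,\alpha), y+j] = \beta^{j(1)-j(2)}\alpha^{j(2)}[(\beta,\alpha), y]$ to factor $[(\beta,\alpha), y]$ out of $\mathbb{E}_y[[(\beta,\alpha), Y_1]]$, obtaining
\[
\mathbb{E}_y[[(\beta,\alpha), Y_1]] = [(\beta,\alpha), y]\left(\lambda_1 \beta^{-1} + \mu_1 \beta + \lambda_2 \alpha \beta^{-1} + \mu_2 \beta \alpha^{-1}\right) = {\bm p}(\beta,\alpha) \, [(\beta,\alpha), y].
\]
Multiplying by $z$ and invoking the defining relation $z{\bm p}(\beta,\alpha) = 1$ of ${\mathcal H}^z$ gives $Y$-$z$-harmonicity at $y$.

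For the second claim, the crucial algebraic identity is $T_n(X_0 + I_1) = ne_1 + \mathcal{I}(X_0 + I_1) = T_n(X_0) + \mathcal{I} I_1 = T_n(X_0) + J_1$, which holds as long as $X_1 = X_0 + I_1$, i.e., as long as $X_0 \notin \partial_1 \cup \partial_2$. Consequently, for such $x$,
\[
z\,\mathbb{E}_x\bigl[[(\beta,\alpha), T_n(X_1)]\bigr] = z\,\mathbb{E}_{T_n(x)}\bigl[[(\beta,\alpha), T_n(x)+J_1]\bigr] = z\,{\bm p}(\beta,\alpha)\,[(\beta,\alpha), T_n(x)] = [(\beta,\alpha), T_n(x)],
\]
where the last equality uses $(\beta,\alpha) \in {\mathcal H}^z$ again. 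This is exactly the $X$-$z$-harmonicity of $x \mapsto [(\beta,\alpha), T_n(x)]$ on $\mathbb{Z}_+^2 - \partial_1 \cup \partial_2$.

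I do not foresee any real obstacle here: the only thing to be careful about is the bookkeeping that distinguishes the unconstrained interior from the boundary $\partial_2$ (for $Y$) and $\partial_1 \cup \partial_2$ (for $X$), which is precisely why the statement excludes these sets. The rest is the same symbolic manipulation as in Lemma \ref{l:interior0}, with the identity $z{\bm p}(\beta,\alpha)=1$ playing the role previously played by ${\bm p}(\beta,\alpha)=1$.
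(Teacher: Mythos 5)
Your proof is correct and is exactly the route the paper takes: the paper dispatches Lemma~\ref{l:interior} by saying the argument is ``parallel to that of Lemma~\ref{l:interior0}'' with $z{\bm p}=1$ replacing ${\bm p}=1$, and your write-up simply supplies the details of that parallel computation, including the transfer to $X$ via $T_n(X_0+I_1)=T_n(X_0)+J_1$.
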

The proof is parallel to that of Lemma \ref{l:interior0} 
and follows from the definitions.

Define
\[
C_z(\beta,\alpha)  \doteq z  \left( 1- \frac{\beta}{\alpha} \right),
(\beta,\alpha) \in {\mathbb C}^2, \alpha \neq 0.
\]
Parallel to Section \ref{s:Yharmonic},
the above definitions give us the following class of $Y-z$-harmonic functions;
\begin{proposition}\label{p:bothYz}
Suppose $(\beta,\alpha) \in {\mathcal H}^z \cap {\mathcal H}_1^z$.
Then $[(\beta,\alpha),\cdot]$ is $Y-z$-harmonic.
\end{proposition}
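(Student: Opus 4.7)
The plan is to follow exactly the two-part template used in the proof of Proposition \ref{p:both}, now carrying along the extra factor $z$. Since the function $h \doteq [(\beta,\alpha),\cdot]$ has loglinear form, computing $z\mathbb{E}_y[h(Y_1)]$ at any point $y$ amounts to evaluating a single polynomial in $(\beta,\alpha)$ times $h(y)$; the polynomial changes between the interior and the boundary $\partial_2$ precisely because the $(0,-1)$ jump is suppressed at $\partial_2$ by $\pi_1$.

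First, Lemma \ref{l:interior} already covers the interior: for any $(\beta,\alpha) \in {\mathcal H}^z$, the function $h$ satisfies $z\mathbb{E}_y[h(Y_1)] = h(y)$ for every $y \in {\mathbb Z}\times{\mathbb Z}_+ - \partial_2$. So it remains only to verify the $Y$-$z$-harmonic identity on $\partial_2 = \{y : y(2)=0\}$.

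For $y \in \partial_2$, I would just expand the expectation by enumerating the four possible increments $J_k \in \{(\pm 1,0),(0,\pm 1)\}$ with their respective probabilities $\mu_1, \lambda_1, \lambda_2, \mu_2$, remembering that $J_1 = (0,-1)$ produces $Y_1 = y$ by definition of $\pi_1$. Factoring $h(y) = \beta^{y(1)}\alpha^{0}$ out of the resulting sum yields
\[
z\,\mathbb{E}_y[h(Y_1)] = h(y)\, z\Bigl(\lambda_1\tfrac{1}{\beta} + \mu_1\beta + \lambda_2\tfrac{\alpha}{\beta} + \mu_2\Bigr) = h(y)\, z\,{\bm p}_1(\beta,\alpha).
\]
By hypothesis $(\beta,\alpha) \in {\mathcal H}_1^z$, so $z\,{\bm p}_1(\beta,\alpha) = 1$ and the right side equals $h(y)$, as required.

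Combining the two cases gives $Y$-$z$-harmonicity everywhere on ${\mathbb Z}\times{\mathbb Z}_+$. There is no real obstacle here; the only substantive point is the bookkeeping observation that the constrained transition at $\partial_2$ replaces the interior characteristic polynomial ${\bm p}$ by ${\bm p}_1$ (the $\mu_2\beta/\alpha$ term becomes the self-loop term $\mu_2$), which is why joint membership in ${\mathcal H}^z$ and ${\mathcal H}_1^z$ is exactly what is needed. This also makes clear why the $z=1$ case (Proposition \ref{p:both}) is recovered as a direct specialization.
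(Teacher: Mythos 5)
Your proof is correct and follows exactly the route the paper uses: it invokes Lemma \ref{l:interior} for the interior ${\mathbb Z}\times{\mathbb Z}_+ - \partial_2$, then does the direct enumeration of increments at $\partial_2$ (where $(0,-1)$ is suppressed) to produce the factor $z\,{\bm p}_1(\beta,\alpha)$, which equals $1$ by membership in ${\mathcal H}_1^z$. The paper's proof of Proposition \ref{p:bothYz} simply defers to the proof of Proposition \ref{p:both}, which is precisely the two-step argument you carried out with the extra $z$.
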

\begin{proposition}\label{p:conjYz}
Suppose $(\beta,\alpha_1) \neq (\beta,\alpha_2)$,  are conjugate points
on ${\mathcal H}^z.$ Then
\begin{equation}\label{d:hbeta}
h_{z,\beta} 
\doteq C_z(\beta,\alpha_2) [(\beta,\alpha_1),\cdot]- C_z(\beta,\alpha_1)
[(\beta,\alpha_2),\cdot]
\end{equation}
is $Y-z$-harmonic.
\end{proposition}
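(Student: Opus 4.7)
The plan is to mimic directly the proof of Proposition \ref{p:conjugate}, replacing harmonicity by $z$-harmonicity and $C$ by $C_z$ throughout. The key observation is that $h_{z,\beta}$ is a linear combination of the two basic functions $[(\beta,\alpha_i),\cdot]$, so by Lemma \ref{l:interior} it is automatically $Y$-$z$-harmonic on the interior ${\mathbb Z}\times {\mathbb Z}_+ - \partial_2$. Only the behaviour on the constraining boundary $\partial_2 = \{y: y(2)=0\}$ has to be checked, and this reduces to a one-line computation showing that the boundary defects of $[(\beta,\alpha_1),\cdot]$ and $[(\beta,\alpha_2),\cdot]$ are proportional with constants $C_z(\beta,\alpha_1)$ and $C_z(\beta,\alpha_2)$, which is precisely the cancellation pattern built into \eqref{d:hbeta}.

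Concretely, the first step is to compute, for $y \in \partial_2$ and $f(\cdot) = [(\beta,\alpha),\cdot]$,
\[
z\,{\mathbb E}_y[f(y + \pi_1(y,J_1))] - f(y).
\]
Using the definition of $J_k = \mathcal I I_k$, the blocking of the $(0,-1)$ step by $\pi_1$ on $\partial_2$ contributes the extra term $\mu_2 f(y)$ in place of $\mu_2 \beta/\alpha\cdot f(y)$. Factoring $\beta^{y(1)}$ and using $z {\bm p}(\beta,\alpha)=1$, the computation should collapse to
\[
z\,{\mathbb E}_y[f(y + \pi_1(y,J_1))] - f(y) = \mu_2\, C_z(\beta,\alpha)\, \beta^{y(1)},
\]
which is the natural $z$-analogue of the identity used in the proof of Proposition \ref{p:conjugate}.

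The second step is then purely algebraic: plug this identity into the definition \eqref{d:hbeta} of $h_{z,\beta}$. For $y \in \partial_2$,
\[
z\,{\mathbb E}_y[h_{z,\beta}(y+\pi_1(y,J_1))] - h_{z,\beta}(y)
= \mu_2 \beta^{y(1)}\bigl( C_z(\beta,\alpha_2) C_z(\beta,\alpha_1) - C_z(\beta,\alpha_1) C_z(\beta,\alpha_2)\bigr) = 0,
\]
so $h_{z,\beta}$ is $Y$-$z$-harmonic on $\partial_2$ as well. Combined with $Y$-$z$-harmonicity on the interior from Lemma \ref{l:interior}, this yields the claim.

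There is no real obstacle; the only point that requires care is the accounting for the blocked jump on $\partial_2$ and the correct sign/factor of $z$ in defining $C_z$, so that the two defects share the same multiplicative structure and cancel. Everything else is a direct rerun of the proof of Proposition \ref{p:conjugate} with the parameter $z$ carried along.
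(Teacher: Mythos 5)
Your proposal is correct and is essentially the paper's own argument: the paper simply states that the proof of Proposition~\ref{p:conjYz} is the same as that of Proposition~\ref{p:conjugate}, and your write-up carries out exactly that transcription (interior case via Lemma~\ref{l:interior}, boundary defect on $\partial_2$ equal to $\mu_2\,C_z(\beta,\alpha)\,\beta^{y(1)}$, and cancellation by the $C_z$-weighted combination). The boundary computation you sketch checks out, so there is nothing to add.
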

The proofs are the same as those of the corresponding Propositions \ref{p:both} and \ref{p:conjugate}
of the previous section.

\subsection{Existence of the Laplace transform of $\tau$}
We next use the $Y-z$ harmonic functions constructed in Propositions
\ref{p:bothYz} and \ref{p:conjYz} to get our existence result.
\begin{proposition}\label{p:laplace}
There exist $z_0 > 1$ and $C_1$ such that
\begin{equation}\label{e:toprovelaplace}
{\mathbb E}_y[ z_0^{\tau} 1_{\{ \tau < \infty \}}] < C_1
\end{equation}
for all $y \in {\mathbb Z} \times {\mathbb Z}_+, y(1) \ge y(2).$
\end{proposition}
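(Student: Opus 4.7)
The plan is to construct a nonnegative $Y$-$z_0$-harmonic function $H_{z_0}$, for some $z_0>1$ sufficiently close to $1$, that is bounded above on $\{y\in{\mathbb Z}\times{\mathbb Z}_+:y(1)\ge y(2)\}$ and bounded below by a positive constant $c_0$ on $\partial B$. Granted this, $M_k:=z_0^{k\wedge\tau}H_{z_0}(Y_{k\wedge\tau})$ is a nonnegative martingale starting from $H_{z_0}(y)$; by the martingale convergence theorem and Fatou's lemma, $c_0\,{\mathbb E}_y[z_0^\tau 1_{\{\tau<\infty\}}]\le {\mathbb E}_y[M_\infty]\le H_{z_0}(y)$, so ${\mathbb E}_y[z_0^\tau 1_{\{\tau<\infty\}}]\le H_{z_0}(y)/c_0$, uniformly bounded over the set in question.

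To build $H_z$ for $z$ near $1$, I would perturb the $z=1$ harmonic function used elsewhere in the paper to compute $P_y(\tau<\infty)$. First, following Proposition \ref{p:conjYz}, I track the conjugate pair $(\beta_1(z),1)$ and $(\beta_1(z),\beta_1(z)^2/\rho_2)$ on ${\mathcal H}^z$, where $\beta_1(z)$ is the smaller root of $z\,{\bm p}(\beta,1)=1$, a quadratic in $\beta$ whose roots at $z=1$ are $r$ and $1$. By continuity and assumption \eqref{as:alphaconj}, one has $\beta_1(z)^2/\rho_2<1$ for $z$ near $1$, so the $Y$-$z$-harmonic function $h_{z,\beta_1(z)}$ from Proposition \ref{p:conjYz} is bounded on $\{y(1)\ge y(2)\ge 0\}$ and reduces to $h_r$ at $z=1$. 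Second, subtracting the defining equations of ${\mathcal H}^z$ and ${\mathcal H}_1^z$ gives ${\mathcal H}^z\cap{\mathcal H}_1^z\subseteq\{\alpha=\beta\}$, and on this slice $z\,{\bm p}(\beta,\beta)=1$ is a quadratic in $\beta$ with roots $\rho_1,1$ at $z=1$; its smaller root $\bar\beta(z)$ perturbs $\rho_1$ and satisfies $\rho_1<\bar\beta(z)<1$, so by Proposition \ref{p:bothYz} the function $[(\bar\beta(z),\bar\beta(z)),y]=\bar\beta(z)^{y(1)}$ is a bounded $Y$-$z$-harmonic function. I then set $H_z:=h_{z,\beta_1(z)}+K\,\bar\beta(z)^{y(1)}$ for a constant $K$ to be chosen.

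The main obstacle is the choice of $K$: $h_{z,\beta_1(z)}$ is negative in part of $\{y(1)\ge y(2)\ge 0\}$, so the correction must dominate there. At $z=1$, the explicit form $h_r(y)=r^{y(1)-y(2)}\bigl[(1-\rho_2/r)-(1-r)(r^2/\rho_2)^{y(2)}\bigr]$ shows that $h_r<0$ only in the strip $\{0\le y(2)<y_\ast\}$ with $y_\ast:=\log((r-\rho_2)/(r(1-r)))/\log(r^2/\rho_2)<\infty$, and there $|h_r(y)|\le (1-r)\,r^{y(1)-y(2)}(r^2/\rho_2)^{y(2)}$. Using $r<\rho_1$ (which follows from $\rho_1\ne\rho_2$) and the finiteness of $y_\ast$, the worst-case inequality $K\rho_1^{y(1)}\ge |h_r(y)|$ reduces to a finite case analysis over $y(2)\in\{0,\dots,\lfloor y_\ast\rfloor\}$ and yields a finite admissible $K$; a slight further increase of $K$ ensures $H_1\ge c_0>0$ on $\partial B$ (since $H_1((k,k))\to 1-\rho_2/r>0$ as $k\to\infty$, and $K$ handles the small-$k$ regime). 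Continuity of all coefficients in $z$ then transfers these inequalities (with $c_0$ halved) to $z=z_0$ sufficiently close to $1$, giving the required $H_{z_0}$ and, via the martingale argument of the first paragraph, the bound \eqref{e:toprovelaplace}.
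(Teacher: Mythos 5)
Your proposal is correct and follows essentially the same route as the paper: the same martingale/Fatou reduction to finding a bounded, nonnegative $Y$-$z_0$-harmonic function that is bounded below by a positive constant on $\partial B$, built from the same two ingredients (the conjugate-pair function continuing $h_r$ and a large multiple of the single-point function continuing $[(\rho_1,\rho_1),\cdot]$), with the constant chosen by exploiting $r<\rho_1$ so that the correction term dominates the finite strip where $h_r$ is negative. The only difference is organizational — you verify the inequalities at $z=1$ and transfer them by continuity, whereas the paper works directly at the perturbed level $z$ — and this does not change the substance of the argument.
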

\begin{proof}
Let us first prove the following:
if we can find, for some $z_0 > 1$ and $C_1 > 0$,
a $Y$-$z_0$  harmonic function $h$
satisfying $h(y) \ge 1$ on $\partial B$ and $C_1 > h \ge 0$ on $B$
we are done. 
The reason is as follows:
that $h$ is $Y-z_0$-harmonic and the option sampling
theorem imply that  $h(Y_{\tau \wedge n})z_0^{\tau \wedge n}$ is
a martingale.
It follows  that
\begin{align*}
 h(y) &=
{\mathbb E}_y[h(Y_{\tau \wedge n})z_0^{\tau \wedge n}] 
\intertext{
for $ y\in B.$ Decompose the last expectation to $\{ \tau \le n \}$
and $\{\tau > n \}$:}
h(y) &=
{\mathbb E}_y[
h(Y_{\tau })z_0^{\tau}1_{\{\tau \le n \}}] +  
{\mathbb E}_y[
h(Y_{n})z_0^{ n}1_{\{\tau > n \}}]
] 
\intertext{That $h \ge 0$ on $B$ implies}
h(y) &\ge
{\mathbb E}_y[
h(Y_{\tau \wedge n })z_0^{\tau}1_{\{\tau \le n \}}]
\intertext{Now $\lim_{n\rightarrow \infty } 
h(Y_{\tau}) z_0^{\tau} 1_{\{\tau \le n\}} = 
h(Y_{\tau}) z_0^{\tau} 1_{\{\tau < \infty\}}.$ This and Fatou's lemma
imply}
h(y) &\ge
{\mathbb E}_y[
h(Y_{\tau})z_0^{\tau}1_{\{\tau < \infty \}}]
\end{align*}
Finally, $h \ge 1 $ on $\partial B$ and $h \le C_1$ give
\eqref{e:toprovelaplace}.

To get our desired $h$ we start from the points
$(r,1)$ and  $(\rho_1,\rho_1)$
on ${\mathcal H}$.
The first point gives us the root $(1,r)$ of the equation
\[
z{\bm p}(\beta,1) = 1.
\]
That 
\[
\frac{\partial  z {\bm p }(\beta,\alpha)}{\partial \beta} |_{(1,r,1)}=
(\lambda_1 + \lambda_2) \frac{r-1}{r^2} \neq 0
\]
and the implicit function theorem give us a differentiable
function $\bm \beta_1$ on an open interval $I_1$
around $z=1$ that satisfies
\[
z {\bm p}(\bm\beta_1(z),1) = 1, z \in I_1, {\bm \beta}_1(1) = r.
\]
The conjugate of $({\bm \beta}_1,1)$ on ${\mathcal H}^z$ is
$({\bm \beta}_1,{\bm\alpha}({\bm \beta}_1,1))$ 
(whenever possible, 
we will omit the $z$ variable and simply write $\bm \beta_1$;
similarly, we will write $\bm \alpha$ for ${\bm \alpha}({\bm \beta}_1,1))$.
These points give us
the $Y-z$-harmonic function
\[
{\bm h}_z = [(\bm \beta_1, 1),\cdot] - 
\frac{1-\bm \beta_1}{1- \rho_2/\bm \beta_1} [(\bm \beta_1,\bm \alpha),\cdot],
\]
where we used 
$C(\bm \beta_1,1)/C(\bm \beta_1,\bm \alpha) = \frac{1-\bm\beta_1}{1-\rho_2/\bm\beta_1}.$
That $\bm \alpha(\bm \beta_1(1),1) = 0 <  r^2/\rho_2< 1$ 
(Assumption \ref{as:alphaconj})implies that 
$0 < \bm \alpha(\bm \beta_1(z),1) <1$ if we choose 
$z >1$ close enough to $1$. $\bm h_z$ will almost serve as our $h$,
except that it does take negative values on a small section of $B$.
To get a positive function we will add to $\bm h_z$ a constant
multiple of the $Y-z$-harmonic function defined by a
point on ${\mathcal H}^z \cap {\mathcal H}_2^z$ that is the continuation
of $(\rho_1,\rho_1)$ on ${\mathcal H}.$ This point is 
$(\bm \beta_2(z),\bm \beta_2(z))$ where $\bm \beta_2(z)$ is the root of the
the equation
\[
\left( \frac{\lambda_1}{\beta} + \mu_1 \beta + \lambda_2 + \mu_2 \right)
=  \frac{1}{z};
\]
satisfying $\bm \beta_2(1) = \rho_1.$ The implicit function theorem
(or direct calculation) shows that $\bm \beta_2$ is smooth in 
an open interval $I_2$ containing $1.$ Now 
$(\bm \beta_2, \bm\beta_2) \in {\mathcal H}^z \cap {\mathcal H}_2^z$
and Proposition \ref{p:bothYz} 
imply that $[(\bm \beta_2,\bm \beta_2), \cdot] \ge 0$
is a $Y-z$ harmonic function. Now define
\[
h' \doteq \bm h_z + C_0 [(\bm \beta_2, \bm \beta_2), \cdot].
\]
By its definition $h'$ is $Y-z$ harmonic.
We would like to choose $C_0$ large enough so that $h'$ is bounded
below by $1$ on $\partial B$ and is nonnegative on $B.$
By our assumption \eqref{as:orderrho},
$\bm \beta_1(1) = r < \bm \beta_2(1) = \rho_1$;
therefore, for $z >1$ close enough to 
$1$, we will still have $\bm \beta_1(z) < \bm \beta_2(z)$; let us
assume that $I_1$ and $I_2$ are tight enough that this holds.
By definition,
\[
h'(y) = \bm \beta_1^{y(1)-y(2)} 
\left( 1 - \frac{1-\bm \beta_1}{1-\rho_2/\bm \beta_1} \bm \alpha^{y(2)} \right) + C_0
\bm \beta_2^{y(1)-y(2)} \bm \beta_2^{y(2)}.
\]
$\bm \beta_2 > \bm \beta_1$ implies that $h'$ 
takes its most negative value for $y(1) = y(2)$,
i.e., on $\partial B$ and if we can choose $C_0 > 0$ so that $h$ is nonnegative on $\partial B$,
it will be so on all of $B$. On $\partial B$, $h'$ reduces to
\[
1 - \frac{1-\bm \beta_1}{1-\rho_2/\bm \beta_1} \bm \alpha^{y(2)} + C_0
\bm \beta_2^{y(2)}.
\]
If $\bm \alpha \le \bm \beta_2$, then setting 
$C_0 = \frac{ 1- \bm \beta_1 }{1-\rho_2/\bm \beta_1}$
would imply $h' \ge 1$ on $\partial B$; $\bm \alpha, \bm \beta_1, \bm \beta_2  \in (0,1)$ imply
\[
h' \le 1 + C_0;
\]
then, $h'$ can serve as our desired $Y-z$ harmonic function $h$ with
$C_1 = 1 + C_0.$

Now let us consider the case
$ \bm \alpha >  {\bm \beta}_2$ :ordinary calculus implies that if we choose
$C_0$ large enough we can make the minimum $m_0 < 0$ over $y(2) > 0$ of 
\[
- \frac{1-\bm \beta_1}{1-\rho_2/\bm \beta_1} \bm \alpha^{y(2)} + C_0
\bm \beta_2^{y(2)}
\]
arbitrarily close to $0$; then choosing $h = \frac{1}{1+m_0} h'$ gives
us a $Y-z$ harmonic function that satisfies $h \ge 1$ on $\partial B$, 
$h \ge 0$ on $B$ and $h \le C_1$ on $B$ where $C_1 = \frac{1}{1+m_0}( 1 + C_0).$
\end{proof}

We now use \eqref{e:toprovelaplace}
to derive an upper bound on the probability that $\tau$ is finite
but too large:
\begin{proposition}
For any $ \delta >0$, there exists $  C_2 > 0$ such that
\begin{equation}\label{e:boundontau}
P_{y}( n  C_2 < \tau < \infty) \le e^{-\delta n}
\end{equation}
for any $ y\in {\mathbb Z} \times {\mathbb Z}_+$, $y(1) > y(2)$
and $n > 1$.
\end{proposition}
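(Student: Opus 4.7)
The plan is to apply a Chernoff--Markov-type argument directly to the bound on the Laplace transform of $\tau$ just established in Proposition \ref{p:laplace}. The heavy lifting has already been done in that proposition; the present proposition is a routine consequence.

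Let $z_0 > 1$ and $C_1$ be the constants from Proposition \ref{p:laplace}, so that ${\mathbb E}_y[z_0^\tau 1_{\{\tau < \infty\}}] \le C_1$ uniformly over $y \in {\mathbb Z}\times {\mathbb Z}_+$ with $y(1) \ge y(2)$. The first step is to rewrite the event in the proposition in a form compatible with Markov's inequality: on $\{\tau < \infty\}$, the condition $\tau > n C_2$ is equivalent to $z_0^\tau > z_0^{n C_2}$, since $z_0 > 1$. Hence
\[
P_y(n C_2 < \tau < \infty) \;=\; P_y\bigl( z_0^\tau 1_{\{\tau < \infty\}} > z_0^{n C_2} \bigr).
\]

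The second step is to apply Markov's inequality to the nonnegative random variable $z_0^\tau 1_{\{\tau < \infty\}}$ and invoke Proposition \ref{p:laplace}:
\[
P_y(n C_2 < \tau < \infty) \;\le\; \frac{{\mathbb E}_y[z_0^\tau 1_{\{\tau < \infty\}}]}{z_0^{nC_2}} \;\le\; \frac{C_1}{z_0^{nC_2}} \;=\; C_1 \, e^{-n C_2 \log z_0}.
\]
The bound is uniform in $y$ because the bound in Proposition \ref{p:laplace} is uniform.

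The final step is to pick $C_2$ large enough that $C_1 e^{-n C_2 \log z_0} \le e^{-\delta n}$ for every $n > 1$. Taking logarithms, this amounts to $n(C_2 \log z_0 - \delta) \ge \log C_1$ for all $n \ge 1$, which is ensured by choosing, for instance,
\[
C_2 \;=\; \frac{\delta + \max(\log C_1, 0)}{\log z_0}.
\]
With this choice the desired inequality \eqref{e:boundontau} follows. There is no real obstacle here; the content of the proposition is entirely packaged in Proposition \ref{p:laplace}, and the only thing one must be careful about is absorbing the prefactor $C_1$ into the exponent by enlarging $C_2$, so that the estimate is valid uniformly for all $n > 1$ rather than only for sufficiently large $n$.
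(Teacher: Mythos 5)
Your proposal is correct and follows essentially the same route as the paper: apply Markov's (Chebyshev's) inequality to $z_0^\tau 1_{\{\tau<\infty\}}$ using the uniform bound from Proposition~\ref{p:laplace}, then choose $C_2$ large enough to absorb $\log C_1$ into the exponent; the paper takes $C_2 = (\delta + \log C_1)/\log z_0$, and your $\max(\log C_1, 0)$ is just a slightly more careful version of the same choice.
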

\begin{proof}
Let $z_0 > 1$ and $C_1$ be as in \eqref{e:toprovelaplace}.
For any $A > 0$, Chebyshev's inequality gives
\begin{align*}
P_y( n  C_2 < \tau <\infty) &= P_y( z_0^{n  C_2} < z_0^\tau <\infty)\\
&\le {\mathbb E}_y[z_0^\tau < \infty] z_0^{-n  C_2}\\
&\le  e^{-n ( C_2 \log(z_0) - \log(C_1)/n)}.
\end{align*}
Choosing $ C_2 = (\delta +\log(C_1)) /\log(z_0) $ gives \eqref{e:boundontau}.
\end{proof}

\section{LD limit for $P_x(\tau_n < \tau_0)$}\label{s:LD1}
Define
\[
V(x) \doteq \log \rho_1 (x(1) -1) \wedge \log(r) (x(1) + x(2) - 1) \wedge \log\rho_2(x(2) - 1).
\]
Assumption \eqref{as:orderrho} implies
\[
-\log(\rho_2) (1-x(2)) \ge  -\log(r) (1- (x(1) + x(2)) )
\]
and therefore
\begin{equation}\label{e:defV}
V(x) = \log(r) (x(1) + x(2) - 1) \wedge \log\rho_1(x(1) - 1).
\end{equation}
The level curves of $V$ for 
for $\lambda_1 =0.2$, $\lambda_2 = 0.1$, $\mu_1 = 0.3$, $\mu_2 = 0.4$
are shown in Figure \ref{f:ldrate}.

\begin{figure}[h]
\centering
\hspace{-0.5cm}
\scalebox{0.5}{
\input{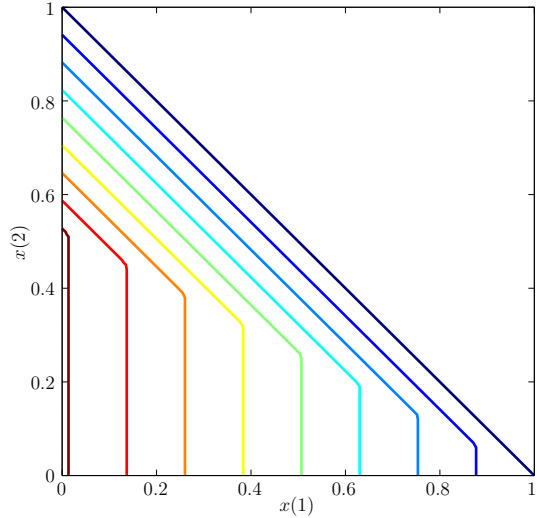}
}
\caption{Level curves of $V$
for $\lambda_1 =0.2$, $\lambda_2 = 0.1$, $\mu_1 = 0.3$, $\mu_2 = 0.4$}
\label{f:ldrate}
\end{figure}

The goal of this section is to prove 
\begin{theorem}
$V$ is the LD limit of $P_x(\tau_n < \tau_0)$, i.e.,
\begin{equation}\label{e:LDlimitp}
\lim -\frac{1}{n} \log P_{\lfloor nx \rfloor}( \tau_n < \tau_0) = V(x).
\end{equation}
for $x(1) + x(2) < 1$, $ x \in {\mathbb R}_+^2.$
\end{theorem}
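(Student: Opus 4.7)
The plan is to prove \eqref{e:LDlimitp} via matching asymptotic lower and upper bounds on $-\tfrac{1}{n}\log P_{x_n}(\tau_n<\tau_0)$.

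For the \emph{lower bound}, I use two exponential changes of measure corresponding to the two rates appearing in the minimum defining $V$. Define the tilt $\tilde P^{(r)}$ by $\lambda_i\mapsto\lambda_i/r$, $\mu_i\mapsto\mu_i r$; this is a probability distribution on increments because $r(\mu_1+\mu_2)=\lambda_1+\lambda_2$, and under $\tilde P^{(r)}$ the drift in coordinate $i$ equals $\mu_i(\rho_i-r^2)/r$, which is positive for $i=1,2$ by \eqref{as:orderrho} and \eqref{as:tech}. Hence the tilted walk drifts into $\partial A_n$ and $\tilde P^{(r)}_{x_n}(\tau_n<\tau_0)\to 1$. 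A direct Radon-Nikodym calculation yields $dP/d\tilde P^{(r)}\ge r^{n-x_n(1)-x_n(2)}$ on $\{\tau_n<\tau_0\}$, where the inequality reflects the fact that boundary-blocked services can only reduce the cumulative count $n_a-n_s$; therefore
\[
P_{x_n}(\tau_n<\tau_0) \;\ge\; r^{n-x_n(1)-x_n(2)}\,\tilde P^{(r)}_{x_n}(\tau_n<\tau_0) \;=\; r^{n-x_n(1)-x_n(2)}(1-o(1)).
\]
An analogous one-coordinate tilt, $\lambda_1\mapsto\lambda_1/\rho_1$, $\mu_1\mapsto\mu_1\rho_1$ (second-node rates unchanged), has positive drift in coordinate $1$ and negative drift in coordinate $2$, still drives the walk to $\partial A_n$ before $(0,0)$ with probability tending to $1$, and yields $P_{x_n}(\tau_n<\tau_0)\ge\rho_1^{n-x_n(1)}(1-o(1))$. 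Taking the better of the two lower bounds yields $\limsup_n-\tfrac{1}{n}\log P_{x_n}(\tau_n<\tau_0)\le V(x)$.

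For the \emph{upper bound}, I use the $X$-harmonic functions provided by Section \ref{s:Yharmonic}: from $(r,1)\in\mathcal H$, Lemma \ref{l:interior0} produces $h^{(1)}(x)=r^{n-x(1)-x(2)}$, harmonic in the interior of $A_n$; from $(\rho_1,\rho_1)\in\mathcal H\cap\mathcal H_1$, Lemma \ref{l:rho1} produces $h^{(2)}(x)=\rho_1^{n-x(1)}$, harmonic on $\mathbb Z_+^2\setminus\partial_1$. A direct boundary computation shows that both are strict submartingales on $\partial_1$, with $h^{(1)}$ also a submartingale on $\partial_2$, so optional stopping does not directly give the desired upper bound. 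Let $\sigma=\sigma_1\wedge\sigma_2$. Optional sampling applied to $h^{(1)}$ at $\tau_n\wedge\sigma$ (where it is a genuine martingale), together with $h^{(1)}\equiv 1$ on $\partial A_n$, produces the restricted bound $P_{x_n}(\tau_n<\sigma\wedge\tau_0)\le r^{n-x_n(1)-x_n(2)}$. The residual probability $P_{x_n}(\sigma<\tau_n<\tau_0)$ is handled by the strong Markov property at $\sigma$: on $\partial_2$, $h^{(2)}$ is still a martingale and the same bound re-applies from the restart point; on $\partial_1$, the reflected dynamics of $X(2)$ along $\partial_1$ has effective stability ratio $\rho_2\le r$, and Proposition \ref{p:laplace} provides the sojourn-time tail bounds needed to iterate the argument. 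The total residual contribution is at most $n^{O(1)}\min(r^{n-x_n(1)-x_n(2)},\rho_1^{n-x_n(1)})$; combining all pieces gives $P_{x_n}(\tau_n<\tau_0)\le e^{-nV(x)+o(n)}$.

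The \emph{main obstacle} is the boundary iteration on $\partial_1$ in the upper bound: since neither $h^{(1)}$ nor $h^{(2)}$ is a supermartingale on $\partial_1$, a straightforward martingale argument only controls trajectories that never hit $\partial_1$. Closing the gap requires combining the strong-Markov restart at each $\partial_1$-visit with a one-dimensional large-deviations analysis of the reflected $X(2)$-dynamics along $\partial_1$ (whose effective decay ratio is $\rho_2$), and using Proposition \ref{p:laplace} to ensure that the cumulative residual boundary contribution remains of sub-exponential order relative to $e^{-nV(x)}$.
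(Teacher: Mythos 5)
Your lower bound on $P_{x_n}(\tau_n<\tau_0)$ via exponential changes of measure is a valid alternative route to what the paper does: the paper (Propositions \ref{p:subharmonicpn} and \ref{p:LDupper}) instead constructs an explicit $X$-subharmonic function $f_n(x)=\rho_1^{n-x(1)}\vee r^{(n-x(1))-x(2)}\vee\rho_1^{n-1}$ and applies optional sampling, giving $P_x(\tau_n<\tau_0)\ge f_n(x)-f_n(0)$ directly; both approaches deliver $\limsup-\tfrac1n\log P\le V$. One advantage of the paper's submartingale route is that it yields a clean pointwise inequality, not just an asymptotic one, with no need to verify that the tilted walk hits $\partial A_n$ before $0$.

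The upper bound on $P_{x_n}(\tau_n<\tau_0)$ (i.e.\ $\liminf-\tfrac1n\log P\ge V$) is where your proposal has a genuine gap, and you have essentially identified it yourself. The log-linear functions $r^{n-x(1)-x(2)}$ and $\rho_1^{n-x(1)}$ are $X$-\emph{sub}harmonic on $\partial_1$ (the paper exploits exactly this in Proposition \ref{p:subharmonicpn} for the other direction); they therefore cannot serve as supermartingales to bound $P$ from above near $\partial_1$, and the strong-Markov restart you sketch does not close the gap. When $\bar X$ (unconstrained in the first coordinate) is replaced by the constrained $X$ on $\partial_1$, the submartingale inequality runs the wrong way, and there is no control on the number of excursions from $\partial_1$ and the exponential weight accumulated during each; the claim that ``the cumulative residual boundary contribution remains of sub-exponential order'' is an assertion, not a proof. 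Moreover, Proposition \ref{p:laplace} concerns $\mathbb E_y[z_0^\tau 1_{\{\tau<\infty\}}]$ for the $Y$-process (no constraint on $\partial_1$) and says nothing about the $X$-process's sojourn at $\partial_1$; invoking it here is misplaced.

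The paper's route around this obstacle is the key missing idea: instead of trying to stitch together log-linear functions, it constructs a piecewise-affine function $\tilde V(\cdot,\epsilon)$ whose gradients $r_0,r_1,r_2,r_3$ are all chosen so that the boundary Hamiltonians satisfy $H_1(r_0)=0$ and $H_1(r_2)>0$ (Lemma \ref{l:roots}); it arranges the pieces so that near $\partial_1$ only the pieces with those two gradients are active; and then mollifies (equation \eqref{d:defVsmooth}) to obtain a $C^1$ subsolution of the HJB equation with all boundary conditions (Lemma \ref{l:subsol}). This mollified subsolution, together with a correction for the second derivative term, produces a genuine \emph{super}martingale $M^{(n,\epsilon)}$, and optional sampling plus a time-truncation lemma yields Proposition \ref{p:LDlow}. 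Without some version of this convexity/mollification argument your upper bound on $P$ does not go through.
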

\begin{proof}
Propositions \ref{p:LDlow} and \ref{p:LDupper} state
\begin{equation}\label{e:LDlowerbound}
\liminf -\frac{1}{n} \log P_{\lfloor nx \rfloor}( \tau_n < \tau_0) \ge
V(x).
\end{equation}
and
\begin{equation}\label{e:LDupperbound}
\limsup -\frac{1}{n} \log P_{\lfloor nx \rfloor}( \tau_n < \tau_0) \le
V(x).
\end{equation}
These imply \eqref{e:LDlimitp}.
\end{proof}
Next two subsections prove \eqref{e:LDlowerbound}
and \eqref{e:LDupperbound}.
To prove the first, we will proceed parallel to 
\cite{thesis,DSW,sezer2015exit} 
and construct a sequence of supermartingales $M^n$
starting from a subsolution of a limit Hamilton Jacobi Bellman (HJB)
equation associated with the problem.
To prove the bound \eqref{e:LDupperbound} we will directly construct a 
sequence of subharmonic functions of the process $X$. 
\subsection{LD lowerbound for $P_x(\tau_n < \tau_0)$}\label{ss:ldlow}

For $a \subset \{0,1,2\}$ define the Hamiltonian function
\[
H_a(q) \doteq -\log \left(
\sum_{v \in a^c} p(v)e^{-\langle q,v \rangle} +  \sum_{v \in {\mathcal V}, v(i) \ge 0, i \in a } p(v)
e^{-\langle q,v \rangle} + \sum_{v \in {\mathcal V}, v(i) < 0, i \in a } p(v)\right).
\]
We will denote $H_\emptyset$ by $H$. $H_a$ is convex in $q$.
For $ x \in {\mathbb R}_+^2$, define
\[
\bm b(x) \doteq \{i: x(i) = 0 \}.
\]
Following \cite{dupell2} one can represent $V$ as the value function
of a continuous time deterministic control problem; the 
HJB equation associated with this control problem
is
\begin{equation}\label{e:HJB}
H_{\bm b(x)}(DV(x)) =0;
\end{equation}
a function $W \in C^1$ is said to be a classical subsolution
of \eqref{e:HJB} if
\begin{equation}\label{e:HJBsub}
H_{\bm b(x)}(DV(x)) \ge 0;
\end{equation}
supersolutions are defined by replacing $\ge$ in \eqref{e:HJBsub}
 with $\le.$

To prove \eqref{e:LDlowerbound}
will proceed parallel to \cite[Section 7]{sezer2015exit}:
find an upperbound on $P_x(\tau_n < \tau_0)$ by constructing 
a supermartingale 
associated with the process $X$.
To construct our supermartingale we will proceed parallel to 
\cite{thesis,DSW} and use a subsolution of \eqref{e:HJB}, i.e.,
a solution of \eqref{e:HJBsub}.

Define
\begin{equation}\label{e:roots}
r_0\doteq (0,0),
~~r_1 \doteq \log(\rho_1)(1,0),
~~ r_2 \doteq \log(r)(0,1),
~~r_3 \doteq \log(r) (1,1)
\end{equation}
and
\begin{align*}
\tilde{V}_0(x,\epsilon) &\doteq  -\log(\rho_1) -3\epsilon, ~~~
\tilde{V}_1(x,\epsilon) \doteq -\log(\rho_1) + \langle r_1, x \rangle - 2\epsilon,\\
\tilde{V}_2(x,\epsilon) &\doteq -\log(r)+ \langle r_2, x \rangle -\epsilon,~~\\
\tilde{V}_3(x,\epsilon) &\doteq -\log(r)+ \langle r_3, x \rangle
\end{align*}
and
\begin{equation}\label{d:Vtild1}
\tilde{V}(x,\epsilon) \doteq \bigwedge_{i=0}^3 \tilde{V}_i(x,\epsilon).
\end{equation}

A direct  calculation gives
\begin{lemma}\label{l:roots}
The gradients defined in \eqref{e:roots} satisfy
\begin{align*}
H(r_0) &= H_1(r_0) = H_2(r_0) = 0,~~~ H(r_1) = H_2(r_1) = 0, \\
H(r_2) &>0,  H_1(r_2) > 0, ~~~~~~ H(r_3) =0
\end{align*}
\end{lemma}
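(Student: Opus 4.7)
The plan is straightforward: each of the six claims is obtained by substituting the relevant gradient into the explicit formulas for $H$ and $H_a$ and simplifying using the definitions $\rho_i = \lambda_i/\mu_i$, $r = (\lambda_1+\lambda_2)/(\mu_1+\mu_2)$, together with the normalization $\lambda_1+\mu_1+\lambda_2+\mu_2 = 1$. The five zero-value identities reduce to the statement that the appropriate weighted sum equals $1$, while the lone strict inequality $H(r_2) > 0$ is where the assumptions \eqref{as:orderrho} and \eqref{as:tech} actually get used.

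For $r_0 = (0,0)$ every exponential $e^{-\langle r_0, v\rangle}$ equals $1$, so in all three of $H, H_1, H_2$ the argument of the logarithm is exactly $\lambda_1 + \mu_1 + \lambda_2 + \mu_2 = 1$; hence all three vanish. For $r_1 = \log(\rho_1)(1,0)$, the only nontrivial exponentials are those in the first coordinate, and $\lambda_1 \rho_1^{-1} + \mu_1 \rho_1 = \mu_1 + \lambda_1$ by the definition of $\rho_1$; in both $H$ and $H_2$ the second-coordinate contribution remains $\lambda_2 + \mu_2$ (in $H_2$ the rejected move $v=-e_2$ contributes the constant $\mu_2$ while the admissible $v=e_2$ contributes $\lambda_2$), so again the argument sums to $1$. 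For $r_3 = \log(r)(1,1)$, a direct substitution gives $(\lambda_1+\lambda_2)/r + (\mu_1+\mu_2) r$, which by $r = (\lambda_1+\lambda_2)/(\mu_1+\mu_2)$ equals $(\mu_1+\mu_2) + (\lambda_1+\lambda_2) = 1$.

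The one substantive computation is $H(r_2) > 0$, together with the identical claim for $H_1$, which coincides with $H$ here because the first coordinate of $r_2$ is zero. Both reduce to showing $\lambda_1 + \mu_1 + \lambda_2/r + \mu_2 r < 1$, equivalently $\lambda_2/r + \mu_2 r < \lambda_2 + \mu_2$. Setting $a = \lambda_1 + \lambda_2$ and $b = \mu_1 + \mu_2$ (so $r = a/b$) and clearing denominators, this rearranges to $(b-a)(\lambda_2 b - \mu_2 a) < 0$. Stability gives $b > a$, while $\lambda_2 b - \mu_2 a = \mu_1 \mu_2(\rho_2 - \rho_1) < 0$ by the ordering \eqref{as:orderrho} together with the strict assumption $\rho_1 \neq \rho_2$ from \eqref{as:tech}. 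Thus the product is strictly negative, proving the inequality. There is no real obstacle beyond keeping careful track of which boundary moves get rejected in the definitions of $H_1$ and $H_2$; the assumption $\rho_1 \neq \rho_2$ is precisely what upgrades the nonstrict inequality $\rho_2 \le \rho_1$ from \eqref{as:orderrho} to the strict form needed for $H(r_2) > 0$.
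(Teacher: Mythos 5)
Your proof is correct and follows the same route the paper intends: the paper offers no details beyond the phrase ``A direct calculation gives,'' and your write-up simply supplies that calculation. All six identities check out, including the rearrangement of $H(r_2)>0$ to $(b-a)(\lambda_2 b - \mu_2 a)<0$ and the observation that $\rho_1\neq\rho_2$ from \eqref{as:tech} is what makes the inequality strict.
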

The $0$-level curve of the Hamiltonians and the
gradients $r_i$ are shown in Figure \ref{f:charsurf}
\begin{figure}[h]
\centering
\hspace{-1cm}
\begin{subfigure}{0.48\textwidth}
\scalebox{0.45}{
\input{charsurf}}
\caption{
The $0$-level curves of $H$ and $H_1$ (dashed line)
for $\lambda_1 =0.2$, $\lambda_2 = 0.1$, $\mu_1 = 0.3$, $\mu_2 = 0.4$
and the gradients $r_i$, $i=0,1,2,3$.
}
\label{f:charsurf}
\end{subfigure}
\hspace{0.5cm}
\begin{subfigure}{0.48\textwidth}
\scalebox{1}{
\input{regionstex}
}
\caption{ Regions $R_i$}
\label{f:regions}
\end{subfigure}
\end{figure}

Define
\begin{equation}\label{d:C4}
C_3 \doteq -\frac{1}{\log(r)}, C_4 \doteq -\log(\rho_1)C_3 = \frac{\log(\rho_1)}{\log(r)}.
\end{equation}
$ \log(r) < \log(\rho_1) < 0$ implies 
\begin{equation}\label{e:boundbeta}
1 > C_4 > 0.
\end{equation}
The functions $\tilde{V}_i$, $i=1,2,3$ meet at
\begin{equation}\label{d:xstar}
x^* = 
\left( C_3\epsilon, 1 - C_4  +C_3(1+C_4)\epsilon \right) 
\end{equation}
i.e., 
\begin{equation}\label{e:xstar}
\tilde{V}_1(x^*) = \tilde{V}_2(x^*) = \tilde{V}_3(x^*) = -\log(\rho_1) -
\left(2 + C_4 \right)\epsilon.
\end{equation}
We assume that $\epsilon> 0$ is small enough so that  $x^*$
satisfies
\[
x^*(1), x^*(2) > 0, x^*(1) + x^*(2) < 1.
\]
$\tilde{V}(\cdot,\epsilon)$ equals $\tilde{V}_i(\cdot,\epsilon)$ in
the region
\[
R_i \doteq \{x \in {\mathbb R}^2: \tilde{V}(x,\epsilon) = \tilde{V}_i(x,\epsilon)\},
\]
these regions are shown in Figure \ref{f:regions}.

As in \cite{thesis,DSW}, 
we will mollify $\tilde{V}(x,\epsilon)$ with
\[
\eta(x) \doteq 1_{|x| \le 1 } (|x|^2-1)^2, x \in {\mathbb R}^2,
C_5 \doteq \int_{\mathbb R^2} \eta(x) dx,
\eta_\delta(x) \doteq \frac{1}{\delta^2 C_5} \eta(x/\delta),  \delta > 0.
\]
to get our smooth subsolution of \eqref{e:HJB}:
\begin{equation}\label{d:defVsmooth}
V(x,\epsilon) \doteq \int_{{\mathbb R}^2} \tilde{V}(x+y,\epsilon) 
\eta_{{0.5}C_3 \epsilon}(y)dy,
\end{equation}
\begin{lemma}\label{l:subsol}
The function
$V(x,\epsilon)$ of \eqref{d:defVsmooth} satisfies \eqref{e:HJBsub} and
\begin{equation}\label{e:boundsecder}
\left| \frac{\partial^2 V(\cdot,\epsilon)}{\partial x_i \partial x_j}
\right| \le C_6/\epsilon
\end{equation}
where $C_6$ is independent of $x.$
Furthermore,
\begin{equation}\label{e:Vboundary}
V(x,\epsilon) \le \epsilon \text{ for } x(1) + x(2) = 1, x \in {\mathbb R}_+^2.
\end{equation}
\end{lemma}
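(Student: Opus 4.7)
The plan is to handle the three assertions separately, leveraging the piecewise-affine structure of $\tilde V$, the convexity of the Hamiltonian sublevel sets $\{H_a \ge 0\}$, and standard mollifier estimates.

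For the subsolution inequality, I differentiate under the integral to write
\[
DV(x,\epsilon) = \sum_{i=0}^3 w_i(x)\, r_i,
\qquad
w_i(x) = \int_{R_i - x} \eta_{0.5\,C_3\epsilon}(y)\,dy,
\]
which exhibits $DV(x,\epsilon)$ as a convex combination of the four gradients $r_i$. Since each Hamiltonian $H_a(q) = -\log g_a(q)$ is minus the logarithm of a positive sum of exponentials and nonnegative constants, $g_a$ is convex in $q$, so $\{q: H_a(q) \ge 0\} = \{g_a \le 1\}$ is a convex set. Hence $H_{\bm b(x)}(DV(x,\epsilon)) \ge 0$ will follow as soon as every $r_i$ with $w_i(x) > 0$ lies in $\{H_{\bm b(x)} \ge 0\}$. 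For interior $x$ this is immediate from Lemma \ref{l:roots}, which gives $H(r_i) \ge 0$ for every $i$. For $x \in \partial_1$, comparing the affine formulas for $\tilde V_i$ shows that $R_1 \subset \{x(1) > C_3\epsilon/C_4\}$ (from $\tilde V_1 < \tilde V_0$) and $R_3 \subset \{x(1) > C_3\epsilon\}$ (from $\tilde V_3 < \tilde V_2$); both thresholds exceed the mollifier radius $0.5\,C_3\epsilon$ because $C_4 \in (0,1)$ by \eqref{e:boundbeta}. Thus only $r_0$ and $r_2$ appear with positive weight near $\partial_1$, and Lemma \ref{l:roots} supplies $H_1(r_0) = 0, H_1(r_2) > 0$. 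A symmetric comparison on $\partial_2$ keeps $R_2$ and $R_3$ at distance greater than $0.5\,C_3\epsilon$ from $\partial_2$, leaving only $r_0, r_1$ with $H_2(r_0) = H_2(r_1) = 0$; at the corner $(0,0)$ only $R_0$ meets the mollifier ball, and $H_{\{1,2\}}(r_0) = 0$.

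For the second-derivative bound, integration by parts in the convolution gives (up to signs)
\[
\partial^2_{ij} V(x,\epsilon) = \int \partial_i \tilde V(x+y)\, \partial_j \eta_\delta(y)\,dy, \qquad \delta = 0.5\,C_3\epsilon.
\]
The first factor is bounded by the Lipschitz constant of $\tilde V$, which is at most $\max_i |r_i|$, and the scaling identity $\int |\partial_j \eta_\delta(y)|\,dy = O(\delta^{-1})$ yields $|\partial^2_{ij} V(x,\epsilon)| \le C_6/\epsilon$ with $C_6$ depending only on fixed quantities.

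For the boundary estimate, the pointwise inequality $\tilde V \le \tilde V_3$ and the radial symmetry of $\eta_\delta$ give
\[
V(x,\epsilon) \le \int \tilde V_3(x+y)\eta_\delta(y)\,dy = \tilde V_3(x),
\]
where the last equality uses that $\tilde V_3$ is affine and $\int y\,\eta_\delta(y)\,dy = 0$. On $x(1)+x(2) = 1$ one has $\tilde V_3(x) = 0 \le \epsilon$.

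The main obstacle is the boundary part of the subsolution argument. The interior case is immediate from convexity of $\{H \ge 0\}$ together with Lemma \ref{l:roots}, but on $\partial_1$ and $\partial_2$ some of the gradients $r_i$ actually violate the relevant boundary Hamiltonian inequality (direct computation shows $H_1(r_1), H_1(r_3), H_2(r_2), H_2(r_3)$ are strictly negative). The proof therefore hinges on a careful geometric accounting that locates each region $R_i$ precisely enough to confirm these "bad" $r_i$ never contribute to the convex combination $\sum_i w_i(x) r_i$ near the respective boundary. The mollifier radius $0.5\,C_3\epsilon$ is calibrated precisely so that the gaps of order $C_3\epsilon$ and $C_3\epsilon/C_4$ between the dangerous regions and the constraint boundaries are wide enough to contain the full mollifier support.
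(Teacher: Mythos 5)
Your proof follows the same route as the paper: differentiate under the integral to exhibit $DV(x,\epsilon)=\sum_i w_i(x)r_i$ as a convex combination, invoke Lemma~\ref{l:roots}, and for boundary points use the explicit affine comparisons to show the regions containing the inadmissible gradients keep a gap of more than $0.5C_3\epsilon$ from the constraint boundary, so only the good $r_i$ contribute; the second-derivative bound is the standard mollifier scaling estimate. Two small refinements are worth noting: you argue from convexity of the sublevel set $\{g_a\le 1\}=\{H_a\ge 0\}$ rather than from Jensen applied to $H_a$ directly, which is the cleaner formulation (the paper asserts $H_a$ is convex, but since $g_a$ is a positive combination of log-convex terms $H_a=-\log g_a$ is in fact concave, and what one actually needs is precisely the convexity of that sublevel set); and for \eqref{e:Vboundary} you use the affine exactness of the centered mollifier to conclude $V(x,\epsilon)\le\tilde V_3(x)=0$ on $\{x(1)+x(2)=1\}$, which is sharper and slightly tidier than the paper's $\epsilon/\sqrt 2$ bound obtained by estimating $\tilde V_3$ on a ball.
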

\begin{proof}
The proof is parallel to that of \cite[Lemma 2.3.2]{thesis}.
$\tilde{V}$ is the minimum of four affine functions and hence is Lipschitz continuous
and has a bounded (piecewise constant) gradient almost everywhere. 
This implies
\begin{align}\label{e:DV}
DV(x,\varepsilon) &= 
\int_{{\mathbb R}^2} D\tilde{V}(x+y,\epsilon) \eta_{0.5 C_3\epsilon}(y)dy
\notag\\
&=\sum_{i=0}^3 w_i(x) D\tilde{V}_i,
=\sum_{i=1}^3 w_i(x) r_i
\end{align}
where
\[
w_i(x) = \int_{{\mathbb R}^2} \eta_{0.5 C_3 \epsilon}(x) 1_{R_i}(x) dx.
\]
This shows that $V(\cdot, \varepsilon) \in C^1.$
To show 
\begin{equation}\label{e:toshow}
H_{\bm b(x)}(DV(\cdot,\epsilon)) \ge 0,
\end{equation} one considers
$x\in {\mathbb R}_+^{2o} \doteq \{x \in {\mathbb R}_+^2, x(1),x(2) > 0\}$,
$x \in \partial_1$ and $x \in \partial_2$ separately. 
We will provide the details only for the first two.
For $x \in {\mathbb R}_+^{2o}$,
\[
H_{\bm b(x)}(DV(\cdot,\epsilon)) = H(DV(\cdot,\epsilon)).
\]
By Lemma \ref{l:roots} we know that all $r_i$ satisfy $H(r_i) \ge 0.$
That $H$ is a convex function and Jensen's inequality imply that
the $DV(x,\epsilon) = \sum_{i=0}^3 w_i(x) r_i$ satisfies
$H(DV(x,\epsilon)) \ge 0$; this proves \eqref{e:toshow} for 
$x \in {\mathbb R}_+^{2o}.$

We know by \eqref{e:boundbeta} that $C_4 \in (0,1)$; therefore,
by \eqref{e:xstar}
\begin{align*}
\tilde{V}_0(\cdot,\epsilon) = -\log(\rho_1) - 3\epsilon 
&<-\log(\rho_1) - (2 +C_4) \epsilon\\
&=\tilde{V}_1(x^*,\epsilon) = \tilde{V}_2(x^*,\epsilon) = \tilde{V}_3(x^*,\epsilon).
\\
\end{align*}
This implies that the region $R_0$ intersects all of the
$R_1$, $R_2$ and $R_3$ and in particular that the strip
$\{x \in {\mathbb R}_+^2: x(1)  < C_3 \epsilon \}$ lies in 
$R_0 \cup R_2$.
Then, for $x \in \partial_1$, the ball $B(x,C_3\epsilon/2)$
lies completely in $R_0 \cup R_2$, which implies
\[
DV(x,\epsilon) = w_0(x) r_0 + w_1(x) r_2, w_0(x) + w_1(x) = 1.
\]
By Lemma \ref{l:roots} we know that $H_1(r_0) = 0$ and $H_1(r_2) \ge 0.$
These and the convexity of $H_i$  imply \eqref{e:toshow} for $x \in \partial_1.$

The bound \eqref{e:boundsecder} follows from the Lipschitz continuity
of $\tilde{V}(\cdot,\epsilon)$, differentiation under the
integral sign in \eqref{d:defVsmooth} and bounds on the first derivative of
$\eta$.
Finally, \eqref{e:Vboundary} follows from 
\[
\tilde{V}_3(x,\epsilon) \le \frac{\epsilon}{\sqrt{2}},
\text{ for any }x \in B(y, \epsilon C_3 /2), ~y \text{ such that }
y(1) + y(2) = 1, y\in {\mathbb R}_+^2,
\]

\end{proof}

To get our upperbound on the probability $P_x( \tau_n < \tau_0)$ we define
\[
M^{(n,\epsilon)}_k \doteq e^{-n V(X_k/n,\epsilon)-\frac{C_6k}{n\epsilon}},
\]
where $C_6$ is as in \eqref{e:boundsecder}.
\begin{lemma}
$M^{(n,\epsilon)}$ is a supermartingale.
\end{lemma}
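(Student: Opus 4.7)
The supermartingale inequality $\mathbb{E}[M^{(n,\epsilon)}_{k+1}\mid \mathcal{F}_k] \le M^{(n,\epsilon)}_k$ is, after dividing by $M^{(n,\epsilon)}_k$, equivalent to
\[
\mathbb{E}\bigl[e^{-n[V(X_{k+1}/n,\epsilon) - V(X_k/n,\epsilon)]}\bigm| X_k\bigr] \le e^{C_6/(n\epsilon)}.
\]
The plan is to pass from this exponential-moment bound to the HJB subsolution property via a Taylor expansion, and then recognize the resulting one-step expectation as exactly $\exp(-H_{\bm b(X_k/n)}(DV(X_k/n,\epsilon)))$.

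\textbf{Step 1 (Taylor expansion).} The jumps satisfy $|X_{k+1}-X_k|\le 1$, hence the normalized increment $X_{k+1}/n - X_k/n$ has Euclidean length at most $1/n$. Combining this with the Hessian bound \eqref{e:boundsecder} from Lemma \ref{l:subsol} and Taylor's theorem,
\[
\bigl| n[V(X_{k+1}/n,\epsilon) - V(X_k/n,\epsilon)] - \langle DV(X_k/n,\epsilon),\, X_{k+1}-X_k\rangle\bigr| \le \frac{C_6}{2n\epsilon}.
\]
Exponentiating and taking conditional expectation,
\[
\mathbb{E}\bigl[e^{-n[V(X_{k+1}/n,\epsilon)-V(X_k/n,\epsilon)]}\bigm| X_k=x\bigr] \le e^{C_6/(2n\epsilon)} \, \mathbb{E}\bigl[e^{-\langle DV(x/n,\epsilon),\, X_{k+1}-X_k\rangle}\bigm| X_k=x\bigr].
\]

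\textbf{Step 2 (identification with $H_a$).} Write $a = \bm b(x/n) = \{i : x(i)=0\}$ and recall $X_{k+1}-X_k = \pi(x, I_k)$: the step equals $I_k$ when $I_k(i)\ge 0$ for every $i\in a$, and equals $0$ (blocked) when $I_k(i)<0$ for some $i\in a$. In the blocked case the exponential contributes the factor $1$, while in the admissible case it contributes $e^{-\langle DV(x/n,\epsilon), I_k\rangle}$. Splitting the sum according to these cases reproduces exactly the argument of the logarithm in the definition of $H_a$, so
\[
\mathbb{E}\bigl[e^{-\langle DV(x/n,\epsilon),\, X_{k+1}-X_k\rangle}\bigm| X_k = x\bigr] = e^{-H_{\bm b(x/n)}(DV(x/n,\epsilon))}.
\]

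\textbf{Step 3 (apply the subsolution property).} Lemma \ref{l:subsol} asserts $H_{\bm b(x/n)}(DV(x/n,\epsilon)) \ge 0$, so the right-hand side above is at most $1$. Combining with Step 1 gives
\[
\mathbb{E}\bigl[e^{-n[V(X_{k+1}/n,\epsilon) - V(X_k/n,\epsilon)]} \bigm| X_k\bigr] \le e^{C_6/(2n\epsilon)} \le e^{C_6/(n\epsilon)},
\]
which is what we wanted.

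The only delicate point is Step 2: one must verify that the constraint map $\pi$ in the dynamics of $X$ slots exactly into the three-piece structure defining $H_a$, which is a matter of case analysis at $x\in \partial_1$, $x\in\partial_2$, and $x\in{\mathbb R}_+^{2o}$. Once this matching is made, the rest is a standard Taylor-plus-subsolution argument as in \cite{thesis,DSW,sezer2015exit}.
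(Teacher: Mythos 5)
Your argument is correct and follows the paper's proof essentially step for step: reduce to a one-step bound via the Markov property, Taylor-expand using the Hessian bound \eqref{e:boundsecder}, recognize the one-step conditional expectation as $e^{-H_{\bm b(x)}(DV(x,\epsilon))}$, and invoke the subsolution inequality from Lemma \ref{l:subsol}. The only cosmetic difference is that you keep the factor $1/2$ from the Lagrange remainder (getting $C_6/(2n\epsilon)$) where the paper simply writes $C_6/(n\epsilon)$; both suffice.
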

\begin{proof}
The Markov property of $X$ implies that it suffices to show
\begin{align}
{\mathbb E}_x\left[ M^{(n,\epsilon)}_1 \right] &\le e^{-nV(x/n,\epsilon)}\notag\\
{\mathbb E}_x\left[ M^{(n,\epsilon)}_1 e^{-nV(x/n,\epsilon)}  \right] &\le 1\notag\\
-\log\left({\mathbb E}_x\left[ M^{(n,\epsilon)}_1 e^{-nV(x/n,\epsilon)}  \right]
\right) &\ge 0.\label{e:toprove}
\end{align}
The expression on the left equals
\begin{equation}\label{e:explicitexp}
-\log \left(
\sum_{v: v(i) \ge 0, i \in {\bm b}(x)} 
e^{-n \left(V( (x+v)/n,\epsilon)- V(x/n,\epsilon)\right)} p(v) + 
\sum_{v: v(i) =-1, i \in {\bm b}(x)} p(v) \right) + \frac{C_6}{n\epsilon}.
\end{equation}
A Taylor expansion and the bound \eqref{e:boundsecder} imply
\[
\left|
\left(V( (x+v)/n,\epsilon)- V(x/n,\epsilon)\right) -
\langle DV(x), v/n \rangle \right| \le \frac{C_6}{n^2\epsilon}
\]
Then, the expression in \eqref{e:explicitexp} is bounded below by
\[
-\log \left(
\sum_{v: v(i) \ge 0, i \in {\bm b}(x)} 
e^{-\langle DV(x), v \rangle } p(v) + 
\sum_{v: v(i) =-1, i \in {\bm b}(x)} p(v) \right) -\frac{C_6}{n\epsilon}+ 
\frac{C_6}{n\epsilon}.
\]
The $\log$ term above equals $H_{\bm b(x)}(DV(x,\epsilon))$, which
by Lemma \ref{l:subsol} is nonnegative. This proves \eqref{e:toprove}.

\end{proof}
\begin{proposition}\label{p:LDlow}
Let $x \in {\mathbb R}_+^2$  with
$x(1) + x(2) < 1$, $x_n = \lfloor n x \rfloor$ and let $V$ be as in \eqref{e:defV}. Then for any $\varepsilon > 0$
there exists an integer $N$ such that for $n > N$
\begin{equation}\label{e:upperboundP}
P_{x_n}( \tau_n < \tau_0) \le e^{-n(V(x)-\varepsilon)}.
\end{equation}
In particular,
\begin{equation}\label{e:LDlimitprob}
\liminf- \frac{1}{n} \log P_{\lfloor nx \rfloor}( \tau_n < \tau_0) \ge V(x).
\end{equation}
\end{proposition}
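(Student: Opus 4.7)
The supermartingale $M^{(n,\epsilon)}$ just constructed is the engine. Applying optional sampling at the bounded stopping time $\sigma \doteq \tau_n \wedge \tau_0 \wedge (C_0 n)$, where $C_0$ is a constant to be fixed later, yields
\[
e^{-nV(x_n/n,\epsilon)} \;=\; M^{(n,\epsilon)}_0 \;\ge\; {\mathbb E}_{x_n}\!\left[M^{(n,\epsilon)}_{\tau_n}\, 1_{\{\tau_n < \tau_0 \wedge C_0 n\}}\right].
\]
On the event inside the indicator, $X_{\tau_n} \in \partial A_n$, so \eqref{e:Vboundary} gives $V(X_{\tau_n}/n,\epsilon) \le \epsilon$, and $\tau_n \le C_0 n$; these two facts produce the pointwise bound $M^{(n,\epsilon)}_{\tau_n} \ge e^{-n\epsilon - C_6 C_0/\epsilon}$, which rearranges to
\[
P_{x_n}(\tau_n < \tau_0 \wedge C_0 n) \;\le\; \exp\!\left(n\epsilon + C_6 C_0/\epsilon - n V(x_n/n,\epsilon)\right).
\]

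To convert this into a bound on $P_{x_n}(\tau_n < \tau_0)$, the truncation error $P_{x_n}(\tau_n < \tau_0,\,\tau_n > C_0 n) \le P_{x_n}(\tau_0 > C_0 n)$ must be absorbed. Stability ($\rho_1,\rho_2 < 1$) means the linear function $L(y) = y(1)+y(2)$ has strictly negative drift $\lambda_1+\lambda_2-\mu_1-\mu_2 < 0$ in the interior, with only bounded positive excursions on the axes. A standard drift/Lyapunov argument then gives an exponential-moment bound ${\mathbb E}_{x_n}[e^{\gamma \tau_0}] \le e^{\gamma' n}$ for some fixed $\gamma > 0$ and $\gamma'$ independent of $C_0$, so by Chebyshev $P_{x_n}(\tau_0 > C_0 n) \le e^{-(\gamma C_0 - \gamma') n}$, and $\gamma C_0 - \gamma'$ can be made arbitrarily large by taking $C_0$ large. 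Now, given $\varepsilon > 0$, pick $\epsilon$ small enough that $V(y,\epsilon) \ge V(y) - \varepsilon/3$ on a neighborhood of $x$ (the $\epsilon$-shifts in the $\tilde V_i$ and the mollification radius $0.5 C_3 \epsilon$ each cost only $O(\epsilon)$), then pick $C_0$ so that $\gamma C_0 - \gamma' > V(x) + \varepsilon$, and finally take $n$ large enough for the constant $C_6 C_0/\epsilon$ to contribute at rate $O(1/n)$ in the exponent. Continuity $V(x_n/n,\epsilon) \to V(x,\epsilon)$ and summation of the two pieces deliver \eqref{e:upperboundP}, and \eqref{e:LDlimitprob} follows at once.

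\textbf{Main obstacle.} The only genuinely delicate ingredient is the exponential-moment estimate on $\tau_0$: while $L$ has the right drift in the interior, on $\partial_1$ or $\partial_2$ its one-step drift need not be negative, so Foster's criterion with $L$ alone does not apply. A clean remedy is to couple $L$ with a mild correction (for instance a small exponential tilt $e^{\theta L}$ with $\theta > 0$ small) that enforces a uniformly negative drift off a finite set, or to compare the ``sum queue'' $L(X_k)$ with a one-dimensional reflected walk whose return-time distribution has Cram\'er-type tails. Once this tail estimate is in hand, the rest of the argument is routine optional-sampling bookkeeping around the already-constructed supermartingale.
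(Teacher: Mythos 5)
Your proposal is sound and closely tracks the paper's argument, but you reorganize it into a direct estimate rather than the paper's contradiction. The paper applies optional sampling at $\tau_{0,n}=\tau_n\wedge\tau_0$ without truncation, obtains ${\mathbb E}_{x_n}\bigl[1_{\{\tau_n<\tau_0\}}e^{-C_6\tau_n/(n\epsilon_n)}\bigr]\le e^{-nV(x_n/n,\epsilon_n)}e^{n\epsilon_n}$ with a sequence $\epsilon_n\to 0$, $n\epsilon_n\to\infty$, and then assumes \eqref{e:upperboundP} fails along a subsequence, inserts the time cutoff $\{\tau_{0,n}\le nC_2\}$ from \eqref{e:toolongtime} inside the expectation, and reaches a contradiction with the liminf bound. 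You instead truncate the stopping time to $\sigma=\tau_n\wedge\tau_0\wedge(C_0 n)$ before applying optional sampling, so the correction factor becomes the constant $e^{-C_6C_0/\epsilon}$, which is subexponential, and you absorb the remainder $P_{x_n}(\tau_n<\tau_0,\tau_n\ge C_0 n)$ by a tail bound on the hitting time. This is a cleaner presentation, and for a fixed $\epsilon$ it avoids the $\epsilon_n$-sequence bookkeeping.

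Two small points of calibration. First, the event $\{\tau_n<\tau_0,\tau_n\ge C_0 n\}$ is contained in $\{\tau_{0,n}\ge C_0 n\}$, so you only need a tail estimate on $\tau_{0,n}=\tau_n\wedge\tau_0$, not on the full return time $\tau_0$. This is precisely what the paper invokes via \cite[Theorem A.1.1]{thesis} in \eqref{e:toolongtime}, and it sidesteps the boundary-drift issue you flag as the "main obstacle": since $\tau_{0,n}$ also stops the walk upon reaching $\partial A_n$, the walk cannot linger indefinitely near $\partial_2$ without hitting $\partial A_n$, and the cited LD-type result gives the needed exponential decay uniformly for starting points $x_n$ with $x(1)+x(2)<1$. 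So the Lyapunov/Cram\'er machinery you sketch is not required here; a citation to the same theorem the paper uses suffices. Second, your estimate $V(\cdot,\epsilon)\ge V(\cdot)-\varepsilon/3$ needs, in addition to the $O(\epsilon)$ shifts in the $\tilde V_i$ and the $O(\epsilon)$ mollification radius, the continuity step $V(x_n/n,\epsilon)\to V(x,\epsilon)$, which the paper records explicitly; including it makes the passage from \eqref{e:upperboundP} to \eqref{e:LDlimitprob} airtight. With those two adjustments, your argument is a correct and somewhat more transparent route to the same proposition.
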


The proof is parallel to that of \cite[Proposition 4.3]{tandemsezer}. 
\begin{proof}
The inequality \eqref{e:LDlimitprob} follows from \eqref{e:upperboundP} upon taking limits. The rest of the
proof focuses on \eqref{e:upperboundP}.
Let $\epsilon_n > 0$ be a sequence satisfying $\epsilon_n\rightarrow 0$
and $\epsilon_n n \rightarrow \infty.$ Let $\tau_{0,n} = \tau_n \wedge
\tau_0.$ The optional sampling theorem (\cite[Theorem 5.7.6]{durrett2010probability}) applied to the supermartingale $M_k = M_k^{(n,\epsilon_n)}$ 
at time $\tau_{0,n}$ gives
\[
{\mathbb E}_{x_n} \left[
M_{\tau_{0,n}}\right] \le M_0 =  e^{-nV(x_n/n,\epsilon_n)}.
\]
Restricting the expectation on the left to $\{\tau_n < \tau_0\}$ makes it
smaller:
\begin{align}
{\mathbb E}_{x_n} \left[1_{\{ \tau_n < \tau_0\}} M_{\tau_{n}}\right] &\le   e^{-nV(x_n/n,\epsilon_n)} \notag
\intertext{Expanding $M_{\tau_n}$ using its definition gives}
{\mathbb E}_{x_n} \left[1_{\{ \tau_n < \tau_0\}} e^{-n V(X_{\tau_n}/n,\epsilon_n)} e^{\frac{-C_6\tau_n}{n\epsilon_n}}\right] &\le   e^{-nV(x_n/n,\epsilon_n)}\notag
\intertext{$X_{\tau_n} \in \partial A_n$ and the bound \eqref{e:Vboundary} reduce the last display to}\label{e:almostthere}
{\mathbb E}_{x_n} \left[1_{\{ \tau_n < \tau_0\}} e^{\frac{-C_6\tau_n}{n\epsilon_n}}\right] &\le   e^{-nV(x_n/n,\epsilon_n)}
e^{n\epsilon_n}
\end{align}
By the definitions involved we have
\[
\lim_{n\rightarrow \infty} V(x_n,\epsilon_n) = V(x).
\]
This, $n\epsilon_n \rightarrow 0$ 
and taking the $\liminf -\frac{1}{n}\log$ of both sides in
\eqref{e:almostthere} gives
\begin{equation}\label{e:liminfofE}
\liminf_{n\rightarrow \infty} -\frac{1}{n}\log
{\mathbb E}_{x_n} \left[1_{\{ \tau_n < \tau_0\}} e^{\frac{-C_6\tau_n}{n\epsilon_n}}\right]  \ge V(x).
\end{equation}
Now suppose that \eqref{e:upperboundP} doesn't hold, i.e., there exists
$\varepsilon > 0$ and a sequence $n_k$ such that
\begin{equation}\label{e:contradict}
P_{x_{n_k}}( \tau_n < \tau_0) > e^{-n_k (V(x) -\varepsilon)}
\end{equation}
for all $k$; we pass to this subsequence and omit the subscript $k$.
\cite[Theorem A.1.1]{thesis} implies that there is a $ C_2 > 0$ such
that
\begin{equation}\label{e:toolongtime}
P_{x_n}(\tau_{0,n} > n C_2) \le e^{-n (V(x) + 1)}
\end{equation}
for $n$ large.
Then
\begin{align*}
{\mathbb E}_{x_n}
\left[ 1_{\{\tau_n < \tau_0\}} e^{-\frac{C_6 \tau_n}{n\epsilon_n}} \right]
&\ge 
{\mathbb E}_{x_n}
\left[ 1_{\{\tau_n < \tau_0\}} e^{-\frac{C_6 \tau_n}{n\epsilon_n}}
1_{\{\tau_{0,n} \le n C_2\}} \right]\\
&\ge e^{\frac{-C_6  C_2}{n\epsilon n} n}
{\mathbb E}_{x_n}
\left[ 1_{\{\tau_n < \tau_0\}} 
1_{\{\tau_{0,n} \le n C_2\}} \right]\\
&\ge e^{\frac{-C_6 C_2}{n\epsilon_n}n}\left( P_{x_n}(\tau_n < \tau_0)
-P_{x_n}(\tau_{0,n} > n  C_2) \right)\\
&\ge e^{\frac{-C_6 C_2}{n\epsilon_n}n}\left(e^{-n(V(x)-\varepsilon)}
-e^{-(V(x)+1)n} \right).
\end{align*}
Now taking $\limsup -\frac{1}{n}\log$ of both sides gives
\[
\limsup_{n\rightarrow \infty} -\frac{1}{n}\log
{\mathbb E}_{x_n} \left[1_{\{ \tau_n < \tau_0\}} e^{\frac{-C_6\tau_n}{n\epsilon_n}}\right]  \le V(x)-\varepsilon.
\]
This contradicts \eqref{e:liminfofE}. Therefore, the assumption \eqref{e:contradict} is false and
there does exist $N > 0$ such that \eqref{e:upperboundP} holds for $n > N$.
This finishes the proof of this proposition.
\end{proof}
\subsection{LD upperbound for $P_x(\tau_n < \tau_0)$}
The LD upperbound corresponds (because of the $-\log$ transform) to a lowerbound on
the probability $P_x(\tau_n < \tau_0).$ To get a lower bound on this probability,
it suffices to have a submartingale of $X$ with the right values when $X$ hits
$\partial A_n \cup \{0\}$.  
As opposed to the analysis of the previous section
(where we constructed a supermartingale from a subsolution to a limit HJB equation), 
one can directly construct a subharmonic function of $X$ to get the desired submartingale.
The next proposition gives this explicit subharmonic function. 
In its proof the following fact will be useful: if $g_1$ and $g_2$ are
subharmonic functions of $X$ at a point $x$, then so is
$g_1 \vee g_2$, this follows from the definitions involved.
\begin{proposition}\label{p:subharmonicpn}
\[
f_n(x) \doteq  {\rho_1}^{n-x(1)} \vee  r^{(n-x(1))-x(2)} \vee \rho_1^{n-1}
\]
is a subharmonic function of $X$ on $A_n - \partial A_n$
\end{proposition}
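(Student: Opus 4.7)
The plan is to decompose $f_n$ as the pointwise maximum of three functions, $g_1(x) = \rho_1^{n - x(1)}$, $g_2(x) = r^{n - x(1) - x(2)}$ and the constant $g_3 = \rho_1^{n-1}$; verify that each of the three is a subharmonic function of $X$ at every point of $A_n - \partial A_n$; and then conclude by the max-stability of subharmonic functions noted immediately before the proposition. Since $g_3$ is constant, it is automatically harmonic, so the work reduces to $g_1$ and $g_2$.

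For an interior point $x$ with $x(1), x(2) > 0$, the calculation is dictated by the choice of bases. Using $\lambda_1/\rho_1 + \mu_1 \rho_1 = \lambda_1 + \mu_1$ together with $\lambda_1+\mu_1+\lambda_2+\mu_2 = 1$, one gets $\mathbb{E}_x[g_1(X_1)] = g_1(x)$; similarly, the definition of $r$ yields $(\lambda_1+\lambda_2)/r + (\mu_1+\mu_2) r = \lambda_1+\lambda_2+\mu_1+\mu_2$, so $\mathbb{E}_x[g_2(X_1)] = g_2(x)$. Hence both $g_1$ and $g_2$ are exactly harmonic on the interior.

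The real content is in the boundary cases, where the constraining map $\pi$ blocks some of the decrements. Because $g_1$ depends only on $x(1)$, blocking the $-e_2$ jump on $\partial_2$ does not alter $\mathbb{E}_x[g_1(X_1)]$ and $g_1$ stays harmonic there; on $\partial_1$ the blocked $-e_1$ jump replaces the term $\mu_1 \rho_1^{n-x(1)+1}$ by the strictly larger $\mu_1 \rho_1^{n-x(1)}$, leaving a surplus proportional to $\mu_1 - \lambda_1 > 0$, which gives strict subharmonicity. For $g_2$ the same mechanism produces a surplus $\mu_1(1-r) > 0$ on $\partial_1$ and $\mu_2(1-r) > 0$ on $\partial_2$; at the corner $x = 0$ both decrements are blocked simultaneously, and to absorb both one invokes the stability consequence $\mu_1 + \mu_2 > \lambda_1 + \lambda_2$, equivalently $\mu_1+\mu_2 > 1/2$.

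The main obstacle is thus only the careful enumeration of the three boundary regimes ($\partial_1$, $\partial_2$ and the corner) for each of $g_1$ and $g_2$; there is no construction beyond the observation that $\rho_1$ and $r$ are precisely the bases that make the loglinear ansatz $X$-harmonic in the interior. The constant $g_3 = \rho_1^{n-1}$ plays no role in the subharmonicity check itself, but will be needed in the subsequent optional-sampling argument to calibrate the value of $f_n$ at the origin, where $X$ can also exit $A_n - \partial A_n$ via $\tau_0$.
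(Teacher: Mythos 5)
Your proof is correct, and it takes a genuinely more direct route than the paper's. You verify that each of $g_1(x)=\rho_1^{\,n-x(1)}$, $g_2(x)=r^{\,n-x(1)-x(2)}$ and the constant $g_3=\rho_1^{\,n-1}$ is individually $X$-subharmonic on \emph{all} of $A_n-\partial A_n$ (harmonic in the interior, with explicit nonnegative surpluses $\mu_1-\lambda_1$, $\mu_1(1-r)$, $\mu_2(1-r)$, and $2(\mu_1+\mu_2)-1$ in the various boundary regimes), and conclude by max-stability. The paper instead avoids checking $g_1$ on $\partial_1$ and $g_2$ on $\partial_2$ altogether: it observes that for $x(1)\le 1$ the pointwise maximum collapses to $g_2\vee g_3$, and that on $\partial_2$ it collapses to $g_1\vee g_3$ with the bound $f_n\ge g_1\vee g_3$ used to handle the neighboring row; the $\partial_2$-harmonicity of $g_1$ is then read off from Lemma \ref{l:rho1} via the $X$--$Y$ correspondence. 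Your approach buys simplicity and self-containment (no appeal to the $Y$-harmonic machinery), at the cost of one extra boundary case; the paper's buys economy by exploiting the combinatorics of the max. One small correction in your favor: the paper asserts that $x\mapsto r^{n-x(1)-x(2)}$ is $X$-\emph{harmonic} on $\partial_1$, whereas, as your surplus computation $\mu_1(1-r)>0$ shows, it is strictly subharmonic there; this does not affect the paper's conclusion, since subharmonicity is all that is needed, but your version is the accurate one.
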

\begin{proof}
We note
\[
\rho_1^{n-x(1)} = \rho_1^{(n-x(1)) - x(2)} \rho_1^{x(2)} 
= [( \rho_1,\rho_1), T_n(x)].
\]
Furthermore, $(\rho_1,\rho_1) \in {\mathcal H}$.
It follows from these and Lemma \ref{l:interior} that
$x \mapsto \rho_1^{n-x(1)}$ is $X$-harmonic for 
$x \in {\mathbb Z}_+^{2o} \doteq {\mathbb Z}_+^2 -\{\partial_1 \cup \partial_2\}.$
A parallel argument proves the same for 
$x \mapsto r^{(n-x(1))-x(2)}.$ The constant function
$x \mapsto \rho_1^{n-1}$ is trivially $X$-harmonic for
all $x \in {\mathbb Z}_+^2.$ It follows that their maximum,
$f_n$ is subharmonic on  ${\mathbb Z}_+^{2o}.$

It remains to prove that $f_n$ is subharmonic on $\partial_1$ and
$\partial_2.$ $f_n(x) = r^{(n-x(1))-x(2)} \vee \rho_1^{n-1}$ for
$x \in \partial_1 \cup \{x \in {\mathbb Z}_+^2, x(1) = 1 \}.$
Both  $x \mapsto r^{(n-x(1))-x(2)}$ and 
$x \mapsto \rho_1^{n-1}$  are $X$-harmonic on $\partial_1.$
It follows from these that $f_n$ is subharmonic on $\partial_1.$

For  $x \in \partial_2 \cap \{x \in {\mathbb Z}_2^+: x(1) < n\} $
we have $f_n(x) = \rho_1^{n-x(1)} \vee \rho_1^{n-1}.$ By Lemma \ref{l:rho1}
and by fact that $I_k = {\mathcal I} J_k$ we know
$x\mapsto \rho_1^{n-x(1)}$ is harmonic on $\partial_2$;
the same trivially holds for $x \mapsto \rho_1^{n-1}$; therefore,
$x\mapsto \rho_1^{n-x(1)} \vee \rho_1^{n-1}$ is subharmonic on 
$\partial_2 \cap \{x \in {\mathbb Z}_2^+: x(1) < n\} $
Furthermore, by definition $f_n(x) \ge 
\rho_1^{n-x(1)} \vee \rho_1^{n-1}$.
These imply that
$f_n$ is subharmonic on
 $\partial_2 \cap \{x \in {\mathbb Z}_2^+: x(1) < n \}.$

The last three paragraphs together imply the statement of the proposition.
\end{proof}
\begin{proposition}\label{p:LDupper}
\begin{equation}\label{e:discreteupperbound}
P_x(\tau_n < \tau_0)  \ge f_n(x)  -f_n(0)
\end{equation}
and in particular
\begin{equation}\label{e:LDupperboundproof}
\limsup -\frac{1}{n} \log P_{\lfloor nx \rfloor}( \tau_n < \tau_0) \le
V(x),
\end{equation}
for $x \in {\mathbb R}_+^2$, $x(1) +x(2) < 1$, $x(1) > 0.$
\end{proposition}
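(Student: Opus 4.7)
The plan is to combine the subharmonicity of $f_n$ established in Proposition \ref{p:subharmonicpn} with the optional sampling theorem. Since $f_n$ is subharmonic on $A_n - \partial A_n$ and bounded, the stopped process $f_n(X_{k \wedge \tau_{0,n}})$, where $\tau_{0,n} \doteq \tau_n \wedge \tau_0$, is a bounded nonnegative submartingale. Optional sampling then yields $\mathbb{E}_x[f_n(X_{\tau_{0,n}})] \ge f_n(x)$.

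Next I would evaluate $f_n$ on the two pieces of the stopping boundary. On $\partial A_n$, every point satisfies $x(1)+x(2)=n$, so $\rho_1^{n-x(1)} = \rho_1^{x(2)} \le 1$, $r^{(n-x(1))-x(2)} = r^0 = 1$, and $\rho_1^{n-1} \le 1$, hence $f_n \equiv 1$ on $\partial A_n$. At the origin, the ordering $\rho_2 \le r \le \rho_1 < 1$ gives $f_n(0) = \rho_1^n \vee r^n \vee \rho_1^{n-1} = \rho_1^{n-1}$. Splitting the expectation according to which boundary is hit first then gives
\[
P_x(\tau_n < \tau_0) \cdot 1 + (1 - P_x(\tau_n < \tau_0)) \cdot f_n(0) \ge f_n(x),
\]
and rearranging (using $f_n(0) \le 1$) yields \eqref{e:discreteupperbound}.

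To extract \eqref{e:LDupperboundproof} at $x_n = \lfloor nx \rfloor$, I would compute the exponential rate of $f_n(x_n)$. A short calculation using $n - x_n(1) = n(1-x(1)) + O(1)$ and similarly for $n - x_n(1) - x_n(2)$ shows
\[
-\tfrac{1}{n} \log f_n(x_n) \; \longrightarrow \; \min\bigl\{|\log\rho_1|(1-x(1)),\; |\log r|(1-x(1)-x(2)),\; |\log\rho_1|\bigr\}.
\]
The hypothesis $x(1) > 0$ makes the third term strictly larger than the first, so this limit equals $V(x)$ as given in \eqref{e:defV}.

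The one step requiring a little care is checking that the subtracted term $f_n(0)$ does not spoil the exponential rate; this is precisely where the assumption $x(1) > 0$ is used. Since $f_n(x_n) \ge \rho_1^{n-x_n(1)}$, the ratio $f_n(0)/f_n(x_n) \le \rho_1^{x_n(1)-1}$ decays exponentially in $n$, so $f_n(x_n) - f_n(0) \ge \tfrac{1}{2}f_n(x_n)$ for all sufficiently large $n$. Combining this with \eqref{e:discreteupperbound} and taking $\limsup -\tfrac{1}{n}\log$ of both sides delivers \eqref{e:LDupperboundproof}. The main (modest) obstacle is really just this last control of the $-f_n(0)$ correction; the rest is bookkeeping built on Proposition \ref{p:subharmonicpn}.
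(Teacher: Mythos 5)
Your proof follows exactly the paper's route: apply optional sampling to the submartingale generated by the subharmonic function $f_n$ of Proposition \ref{p:subharmonicpn}, evaluate $f_n$ on the stopping boundary to get \eqref{e:discreteupperbound}, then compute the exponential rate. The one place where you add genuine detail is the final step --- showing $f_n(0)/f_n(x_n) \le \rho_1^{x_n(1)-1} \to 0$ so the subtraction of $f_n(0)$ does not affect the rate --- which the paper compresses into the single clause ``Taking $-\frac{1}{n}\log$ of both sides and applying $\limsup$'' but which is indeed where the hypothesis $x(1) > 0$ is used.
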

\begin{proof}
By Proposition \ref{p:subharmonicpn}, we know that $f_n$ is a subharmonic
function of $X$. It follows that $h(X_n)$ is a submartingale. This and
the optional sampling theorem imply:
\begin{align*}
f_n(x) &\le {\mathbb E}_x[ f_n(\tau_n \wedge \tau_0) ]\\
f_n(x) &= P_x(\tau_n < \tau_0) (1-f_n(0)) + f_n(0)\\
 &\le  P_x(\tau_n < \tau_0) + f_n(0),
\end{align*}
where we have used $f_n(x) = 1$ for $ x\in \partial A_n$; this gives
\eqref{e:discreteupperbound}. Taking $-\frac{1}{n}\log$ of both sides
and applying $\limsup$ gives \eqref{e:LDupperboundproof}.
\end{proof}

\section{LD limit of $P_x( \sigma_1 < \tau_n < \tau_0)$}\label{s:LD2}
To implement the argument given in the introduction we need an LD
lowerbound for the probability
\begin{equation}\label{e:boundtwosteps}
P_x( \sigma_1 < \tau_n < \tau_0 ).
\end{equation}
We will obtain the desired bound through a subsolution of the
limit HJB
equation associated with  $X$.
This is parallel to the construction given in 
\cite[Proposition 4.3]{tandemsezer} and the argument of Section \ref{ss:ldlow}.
The main difference is in the construction of the subsolution.
Bounding \eqref{e:boundtwosteps} requires a subsolution
consisting of two pieces, one piece for before $\sigma_1$ and one for after.
For the first piece we need the following additional root of the limit Hamiltonian:
\begin{equation}\label{d:r4}
r_4 \doteq \left( \log(\rho_1/r),\log(r)\right).
\end{equation}
Now define
\begin{align}
\tilde{V}_4(x,\epsilon) &\doteq -\log(r) + \langle r_4, x\rangle,\notag 
\tilde{V}(0,x,\epsilon) \doteq  \bigwedge_{i \in \{0,2,4\}} \tilde{V}_i(x,\epsilon)\notag \\
\tilde{V}(1,x,\epsilon) &\doteq  \tilde{V}(x,\epsilon)= \bigwedge_{i=0}^3 \tilde{V}_i(x,\epsilon)
\label{d:defVsigma1}
\end{align}
and
\begin{align*}
V_\sigma(0,x) &\doteq \tilde{V}(0,x,0) = (-\log(\rho_1)) \wedge (-\log(r) + \langle r_4, x \rangle )\\
V_\sigma(1,x) &\doteq \tilde{V}(1,x,0) = (-\log(\rho_1)+ \langle r_1,x \rangle) 
\wedge (-\log(r) + \langle r_3, x \rangle )
\end{align*}
where the 
vectors $r_i$ are as in \eqref{e:roots}.
Now define the smoothed subsolution:
\begin{equation}\label{d:defVsmooth2s}
V(x,\epsilon,i) \doteq \int_{{\mathbb R}^2} \tilde{V}(x+y,\epsilon,i) 
\eta_{{0.5}C_3 \epsilon}(y)dy, i=0,1.
\end{equation}
The function $\tilde{V}(0,\cdot,\cdot)$ is obtained from $\tilde{V}(1,\cdot,\cdot)$ by striking
out $\tilde{V}_1$ from the minimum and replacing $\tilde{V}_3$ with $\tilde{V}_4.$ In particular,
the components $\tilde{V}_0$ and $\tilde{V}_2$ are common to both 
$\tilde{V}(1,\cdot,\cdot)$ and $\tilde{V}(0,\cdot,\cdot)$; 
this ensures that these functions
overlap around an open region along $\partial_1$, which implies in particular that
\begin{equation}\label{e:equalityalongp1}
V(1,x,\epsilon) = V(0,x,\epsilon)
\end{equation}
for $x \in \partial_1.$
\begin{remark}{\em
The condition \eqref{e:equalityalongp1} allows one to think of
$V(\cdot,\cdot,\cdot)$ as a subsolution of the HJB equation on
a manifold; the manifold consists of two copies of ${\mathbb R}_+^2$,
glued to each other along $\{x \in {\mathbb R}_+^2, x(1) = 0 \}.$
}
\end{remark}
We use $V(\cdot,\cdot,\cdot)$ to construct the supermartingale
\[
M^{(n,\epsilon,\sigma)}_k \doteq e^{-n V(X_k/n,\epsilon,1_{\{k < \sigma_1\}})-\frac{C_6 k}{n\epsilon}},
\]
where $C_6/\epsilon$ is an upperbound on the second derivative of $V(\cdot,\cdot,\cdot)$, which can be obtained by an argument
parallel to the one used
in the proof of \eqref{e:boundsecder} of Lemma \ref{l:subsol}.
The main difference from subsection \ref{ss:ldlow} is that the smooth subsolution has an additional
parameter $i$ to keep track of whether $X$ has touched $\partial_1$; this appears as the
$1_{\{k < \sigma_1\}}$ term in the definition of the supermartingale $M^{(n,\epsilon,\sigma)}$.
A three stage version of this argument appears in \cite[Proposition 4.3]{tandemsezer} to bound 
another related probability arising from the analysis of the two dimensional tandem random walk.
The main result of this section is the following:
\begin{proposition}\label{p:twostageboundX}
For any $\epsilon > 0$, there exists $N > 0$ such that 
\begin{equation}\label{e:LDlowerboundtwostages}
P_{x_n}( \sigma_1 < \tau_n < \tau_0) \le e^{-n (V_\sigma(0,x)-\epsilon)}, 
\end{equation}
for $n > N$, where $x_n = \lfloor nx \rfloor$, $0 < x(1) + x(2) < 1$,  $x \in {\mathbb R}_+^2.$
\end{proposition}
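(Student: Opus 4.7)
The plan is to mirror the argument of Proposition \ref{p:LDlow}, using the two-phase subsolution $V(\cdot,\epsilon,\cdot)$ of \eqref{d:defVsmooth2s} and the supermartingale $M^{(n,\epsilon,\sigma)}$ built from it. The phase variable tracks whether $X$ has hit $\partial_1$, which is what allows us to extract the sharper LD rate $V_\sigma(0,x) \ge V(x)$ appropriate to the two-stage event $\{\sigma_1 < \tau_n < \tau_0\}$, whose optimal path is forced to detour through $\partial_1$ before reaching $\partial A_n$.

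I would first verify that both $V(0,\cdot,\epsilon)$ and $V(1,\cdot,\epsilon)$ are $C^1$ subsolutions of the HJB equation \eqref{e:HJB} on $\mathbb{R}_+^2$, with Hessian bounded by $C_6/\epsilon$. The case $V(1,\cdot,\epsilon)$ is Lemma \ref{l:subsol}. For $V(0,\cdot,\epsilon)$ the only new ingredient is the gradient $r_4 = (\log(\rho_1/r),\log(r))$; using $\lambda_1+\lambda_2+\mu_1+\mu_2=1$ together with the identities $\lambda_i/\rho_i=\mu_i$ and $(\lambda_1+\lambda_2)/r=\mu_1+\mu_2$, a direct calculation gives $H(r_4)=0$ in the interior and $H_1(r_4)\ge 0$ on $\partial_1$, in the spirit of Lemma \ref{l:roots}. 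Convexity of $H$ and $H_1$ together with Jensen's inequality applied to the mollification \eqref{d:defVsmooth2s} then upgrade these pointwise facts to the subsolution inequality at every $x$, exactly as in Lemma \ref{l:subsol}.

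Next I would check the supermartingale property of $M^{(n,\epsilon,\sigma)}$. On intervals where the phase is constant, the Taylor-plus-Hessian argument used in the proof preceding Proposition \ref{p:LDlow} applies verbatim and the subsolution inequality absorbs the $C_6/(n\epsilon)$ correction. At the single time $k=\sigma_1$ where the phase indicator flips, the continuity condition \eqref{e:equalityalongp1} — which holds because $\tilde{V}(0,\cdot,\epsilon)$ and $\tilde{V}(1,\cdot,\epsilon)$ share the common components $\tilde{V}_0$ and $\tilde{V}_2$, so that the mollified functions agree in an open neighborhood of $\partial_1$ — guarantees that $V$ does not jump across the switch, and hence the supermartingale inequality survives through $\sigma_1$.

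Finally I would apply optional sampling at $\tau_n\wedge\tau_0$ and restrict to the event $\{\sigma_1 < \tau_n < \tau_0\}$. The analog of the boundary bound \eqref{e:Vboundary} will show that the subsolution value at the terminal point on $\partial A_n$ is at most $\epsilon$, while the initial value converges, along a sequence $\epsilon_n\to 0$ with $n\epsilon_n\to\infty$, to $V_\sigma(0,x)$. Absorbing the $e^{-C_6\tau_n/(n\epsilon_n)}$ factor via the tail estimate \eqref{e:boundontau}, and running the same contradiction scheme as in the proof of Proposition \ref{p:LDlow}, yields \eqref{e:LDlowerboundtwostages}. The principal obstacle is the construction and verification of the phase-$0$ piece — both the computation $H(r_4)=0$, $H_1(r_4)\ge 0$ and checking that the mollification radius $0.5\,C_3\epsilon$ is small enough to preserve the matching \eqref{e:equalityalongp1} across $\partial_1$; once these are in hand, the rest is a direct transcription of the one-phase machinery of Section \ref{ss:ldlow}.
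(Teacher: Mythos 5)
Your proposal follows the paper's route exactly: the paper's own proof of Proposition \ref{p:twostageboundX} is a one-line remark that one applies the optional sampling theorem to the supermartingale $M^{(n,\epsilon_n,\sigma)}$ (built from the two-phase subsolution $V(\cdot,\epsilon,\cdot)$) along a sequence $\epsilon_n\to 0$, $n\epsilon_n\to\infty$, and invokes the time-truncation bound, exactly as in the proof of Proposition \ref{p:LDlow}. The additional work you carry out --- verifying $H(r_4)=0$ and $H_1(r_4)\ge 0$ for the new gradient $r_4$, the role of the matching condition \eqref{e:equalityalongp1} across the phase switch at $\sigma_1$ --- is precisely the content the paper establishes in the text preceding the proposition and leaves implicit in the proof.

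One small slip: the time-truncation estimate you need here is \eqref{e:toolongtime} (the bound on $\tau_{0,n}=\tau_n\wedge\tau_0$ from \cite[Theorem A.1.1]{thesis}), not \eqref{e:boundontau}, which concerns the $Y$-process hitting time $\tau$ and is used later in Proposition \ref{p:twostageboundbarX} for $\bar{X}$; the present proposition is entirely about the constrained walk $X$. With that reference corrected, your argument matches the paper's.
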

\begin{proof}
Parallel to the proof of Proposition \ref{p:LDlow},
we choose a sequence $\epsilon_n \rightarrow 0$ with $n\epsilon_n \rightarrow \infty$;
 \eqref{e:LDlowerboundtwostages} follows
from an application of
the optional sampling theorem to the supermartingale $M^{(n,\epsilon_n,\sigma)}$ and the
bound \eqref{e:toolongtime}.
\end{proof}

\subsection{LD limit for $P_x( \bar{\sigma}_1 < \tau < \infty )$}
\label{ss:LD3}
For this subsection and the next section it will be convenient to express
the $Y$ process in $x$ coordinates, we do this by setting,
$\bar{X}_k \doteq T_n(Y_k)$; $\bar{X}_k$ has the following dynamics:
\[
\bar{X}_{k+1} = \bar{X}_k + \pi_1(\bar{X}_k,  I_k).
\]
$\bar{\sigma}_1$ of \eqref{d:sigmas} in terms of $\bar{X}$ is 
$\bar{\sigma_1} = \inf\{k:\bar{X}_k \in \partial_1\}.$
The processes $\bar{X}$ and $X$ have the same dynamics except that $\bar{X}$
is not constrained on $\partial_1$. By definition, $\bar{X}_0 = X_0$.
Note the following: $\bar{X}$ hits $\{x \in {\mathbb Z} \times {\mathbb Z}_+:
x(1) + x(2) = n \}$ exactly when $Y$ hits $\{y \in {\mathbb Z} \times
{\mathbb Z}_+: y(1) = y(2)\}$; i.e., if we define
\[
\bar\tau_n \doteq \inf\{k: \bar{X}_k(1) + \bar{X}_k(2) = n \},
\]
then $\tau = \bar\tau_n.$
\begin{proposition}\label{p:twostageboundbarX}
For any $\epsilon > 0$, there exists $N > 0$ such that 
\begin{equation}\label{e:LDlowerboundtwostages}
P_{x_n}( \bar{\sigma}_1 < \bar\tau_n < \infty ) \le e^{-n (V_\sigma(0,x)-\epsilon)}, 
\end{equation}
for $n > N$, where $x_n = \lfloor nx \rfloor$, $ x(1) + x(2) < 1$,  $x \in {\mathbb R}\times {\mathbb R}_+$
\end{proposition}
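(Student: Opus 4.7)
The plan is to adapt the proof of Proposition \ref{p:twostageboundX} to $\bar X$. The only substantive difference is that $\bar X$ is unconstrained on $\partial_1$, so after $\bar\sigma_1$ the first coordinate may go negative. Form the supermartingale
\[
\bar M^{(n,\epsilon,\sigma)}_k \doteq e^{-n V(\bar X_k/n,\epsilon,1_{\{k<\bar\sigma_1\}}) - \frac{C_6 k}{n\epsilon}}
\]
using the same two-stage smooth subsolution $V(\cdot,\cdot,\cdot)$ of \eqref{d:defVsmooth2s}, apply the optional sampling theorem at $\bar\tau_n \wedge nC_2$, restrict the resulting inequality to $\{\bar\sigma_1 < \bar\tau_n\} \cap \{\bar\tau_n \le nC_2\}$, use the boundary bound $V(0,\cdot,\epsilon_n)\le \epsilon_n$ on $\{x(1)+x(2)=1\}$, and combine with the truncation bound $P_{x_n}(\bar\tau_n > nC_2) \le e^{-n(V_\sigma(0,x)+1)}$. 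The latter follows from Proposition \ref{p:laplace} via Chebyshev, using that $\bar\tau_n = \tau$ in distribution under the identification $Y_0 = T_n(\bar X_0)$. Letting $\epsilon_n \to 0$ with $n\epsilon_n \to \infty$ then yields \eqref{e:LDlowerboundtwostages}.

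The supermartingale property splits into two regimes. For $k<\bar\sigma_1$, $\bar X_k$ lies in $\{x(1)>0\}$ where $\bar X$ and $X$ share the same transition law, so the one-step inequality for the $V(1,\cdot,\epsilon)$ piece from the proof of Proposition \ref{p:twostageboundX} transfers verbatim; at $k+1=\bar\sigma_1$ the matching identity \eqref{e:equalityalongp1} ensures there is no jump as the indicator flips from $1$ to $0$. For $k\ge\bar\sigma_1$ the piece $V(0,\cdot,\epsilon)$ is built from $\tilde V_0,\tilde V_2,\tilde V_4$, all of which are defined globally on $\mathbb R\times\mathbb R_+$. The defining gradients $r_0,r_2,r_4$ each satisfy $H(r_i)\ge 0$; the verification for the new root $r_4$ is a short computation using $r(\mu_1+\mu_2)=\lambda_1+\lambda_2$. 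Jensen's inequality then delivers the interior subsolution inequality on all of $\{x(2)>0\}$, including the half-plane $\{x(1)\le 0\}$ that $\bar X$ now explores.

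The main obstacle is the subsolution inequality along $\partial_2\cap\{x(1)<0\}$, a region visited only by $\bar X$: the boundary Hamiltonian $H_2$ need not be nonnegative at $r_4$, so Jensen alone does not suffice. The geometry of $V(0,\cdot,\epsilon)$ rescues the near strip: since $\log(\rho_1/r)>0$, the $\tilde V_4$ vs.\ $\tilde V_0$ crossover on $\partial_2$ occurs at $x(1)\approx -1$, so on $\{x(1)>-1-O(\epsilon)\}\cap\partial_2$ the pointwise minimum is the constant $\tilde V_0$, the mollified gradient is $r_0=0$, and $H_2(0)=0$ holds trivially. To control the remaining contribution from $\{x(1)\le -1-O(\epsilon)\}\cap\partial_2$, one splits the event according to whether $\bar X$ enters $\{x(1)/n\le -1-\eta\}$ before $\bar\tau_n\wedge nC_2$ for a fixed $\eta>0$, and dominates the corresponding probability by an LD estimate on the first-coordinate trajectory combined with the Laplace transform bound from Proposition \ref{p:laplace}; the total cost of any path that both reaches this far region and exits via $\partial A_n$ within $nC_2$ steps exceeds $V_\sigma(0,x)$ by a positive margin independent of $n$. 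This localization is the only genuinely new piece of analysis compared to Proposition \ref{p:twostageboundX}; everything else is a direct transcription.
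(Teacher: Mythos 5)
Your proposal inherits what appears to be a label slip in the paper's definition of $M^{(n,\epsilon,\sigma)}$, and that slip is what creates the difficulty you then work to resolve. Read literally, the indicator $1_{\{k<\sigma_1\}}$ puts $V(1,\cdot,\epsilon)=V(\cdot,\epsilon)$ in force before $\sigma_1$ and $V(0,\cdot,\epsilon)$ in force after; you adopt that reading and then invoke ``the boundary bound $V(0,\cdot,\epsilon_n)\le\epsilon_n$ on $\{x(1)+x(2)=1\}$.'' That bound is false: on the line $x(1)+x(2)=1$ one has $\tilde V_0\wedge\tilde V_2\wedge\tilde V_4=\tilde V_0=-\log\rho_1-3\epsilon$ once $x(1)\ge C_4-2\epsilon C_3$, which is of order $-\log\rho_1$, not of order $\epsilon$. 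Moreover under that reading $M_0=e^{-nV(x_n/n,\epsilon)}$, which can yield at best $e^{-n(V(x)-\epsilon)}$, not the target $e^{-n(V_\sigma(0,x)-\epsilon)}$. The argument closes only if the stages are placed the other way around --- $V(0,\cdot,\epsilon)$ before $\sigma_1$, $V(1,\cdot,\epsilon)=V(\cdot,\epsilon)$ after, i.e.\ indicator $1_{\{\sigma_1\le k\}}$: then $M_0\approx e^{-nV_\sigma(0,x)}$, and at $\tau_n$ the piece in force is $V(1,\cdot,\epsilon)$, to which the boundary bound \eqref{e:Vboundary} applies.

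Once the stages are placed correctly, the obstruction you identify on $\partial_2\cap\{x(1)<0\}$ vanishes and no localization stopping time is needed. Your computation $H_2(r_4)=-\log\bigl(1+(1-r)\mu_2\bigr)<0$ is correct, and it would indeed break the supermartingale inequality if $V(0,\cdot,\epsilon)$ governed the walk after $\bar\sigma_1$. But $r_4$ belongs to the pre-$\sigma_1$ piece; before $\bar\sigma_1$ the walk never leaves $\{x(1)>0\}$, and there the mollified gradient of $V(0,\cdot,\epsilon)$ on $\partial_2$ is $r_0=0$ (the constant $\tilde V_0$ is strictly below $\tilde V_2$ and $\tilde V_4$ on $\partial_2\cap\{x(1)>-1\}$), for which $H_2(0)=0$. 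After $\bar\sigma_1$ the piece in force is $V(1,\cdot,\epsilon)=V(\cdot,\epsilon)$, and on $\partial_2\cap\{x(1)<0\}$ each of $\tilde V_1,\tilde V_2,\tilde V_3$ strictly exceeds $\tilde V_0$, so the mollified gradient is again $r_0=0$. Hence the supermartingale inequality holds at every lattice point $\bar X$ can visit before $\bar\tau_n$, and the paper's one-line justification --- that dropping the $\partial_1$ constraint only relaxes the subsolution requirement --- is, with the corrected labels, accurate. The only genuine change relative to Proposition \ref{p:twostageboundX} is the one the paper records: replace the time truncation \eqref{e:toolongtime} by \eqref{e:boundontau}.
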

\begin{proof}
The two stage subsolution $V(\cdot,\cdot,\cdot)$ of \eqref{d:defVsmooth2s}
is a subsolution for the $\bar{X}$ process as well because, $\bar{X}$
has identical dynamics as $X$ with one less constraint. Therefore,
the proof of Proposition \ref{p:twostageboundX}
applies verbatim to the current setup with one change: in the
proof of \eqref{e:LDlowerboundtwostages} we truncate time with
the bound \eqref{e:toolongtime} for $\tau_n$. We replace this with
the corresponding bound \eqref{e:boundontau} for  $\tau$.
\end{proof}

\section{Completion of the limit analysis}\label{s:together}
We now combine Propositions \ref{p:LDupper}, \ref{p:twostageboundX} and
\ref{p:twostageboundbarX} to get the main approximation result
of this work:
\begin{theorem}\label{t:mainapprox}
For any
$x \in {\mathbb R}_+^2$,
$x(1) + x(2) < 1$, $x(1) > 0$,
there exists $C_7 > 0$ and $N > 0$ such that
\[
\frac{ |P_{x_n}(\tau_n < \tau_0) - P_{T_n(x_n)}( \tau < \infty)|}{
P_{x_n}(\tau_n < \tau_0)}
 = 
\frac{ |P_{x_n}(\tau_n < \tau_0) - P_{x_n}(\bar{\tau}_n < \infty)|}{
P_{x_n}(\tau_n < \tau_0)}
< e^{-C_7 n}
\]
for $n > N$, where $x_n = \lfloor x n \rfloor$.
\end{theorem}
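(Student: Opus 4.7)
The plan is to exploit the pathwise identity already recorded in the introduction. With the coupling $Y_0 = T_n(x_n)$ (so that $\bar X_0 = T_n(Y_0) = x_n$ since $T_n$ is an involution, and $\bar\tau_n = \tau$), the processes $X$ and $\bar X$ share identical trajectories until $\partial_1$ is first touched; since $0 \in \partial_1$ forces $\sigma_1 \le \tau_0$, the ``good'' events $\{\tau_n < \sigma_1\}$ and $\{\bar\tau_n < \bar\sigma_1\}$ coincide. Consequently
\[
P_{x_n}(\tau_n < \tau_0) - P_{x_n}(\bar\tau_n < \infty) = P_{x_n}(\sigma_1 < \tau_n < \tau_0) - P_{x_n}(\bar\sigma_1 < \bar\tau_n < \infty),
\]
and the triangle inequality bounds the numerator of the relative error by the sum of the two ``bad'' probabilities on the right.

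For those bad probabilities, Propositions \ref{p:twostageboundX} and \ref{p:twostageboundbarX} provide, for any fixed $\epsilon > 0$ and all sufficiently large $n$, the common upper bound $e^{-n(V_\sigma(0,x)-\epsilon)}$. For the denominator, Proposition \ref{p:LDupper} supplies the subharmonic lower bound $P_{x_n}(\tau_n < \tau_0) \ge f_n(x_n) - f_n(0)$; since $x(1) > 0$, the dominant summand $\rho_1^{n-x_n(1)} \vee r^{n-x_n(1)-x_n(2)}$ exceeds $f_n(0) = \rho_1^{n-1}$ by a factor bounded away from $1$, yielding $P_{x_n}(\tau_n < \tau_0) \ge e^{-n(V(x)+\epsilon)}$ for $n$ large. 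Assembling the three bounds gives
\[
\frac{|P_{x_n}(\tau_n < \tau_0) - P_{x_n}(\bar\tau_n < \infty)|}{P_{x_n}(\tau_n < \tau_0)} \le 2\, e^{-n(V_\sigma(0,x) - V(x) - 2\epsilon)}.
\]

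The substantive step is the verification that $V_\sigma(0,x) - V(x) > 0$ whenever $x(1) > 0$ (without strict separation one only gets boundedness, not exponential vanishing, of the relative error). Using the definitions \eqref{e:defV}, \eqref{d:r4} and \eqref{d:defVsigma1}, together with the ordering $\log r < \log \rho_1 < 0$ (equivalently $-\log r > -\log \rho_1 > 0$), a short computation shows $\tilde V_4(x,0) \ge \tilde V_2(x,0)$ on ${\mathbb R}_+^2$, so
\[
V_\sigma(0,x) = (-\log\rho_1) \wedge (-\log r)(1-x(2)),
\]
whereas $V(x) = (-\log r)(1 - x(1) - x(2)) \wedge (-\log \rho_1)(1 - x(1))$. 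Each of the two terms defining $V_\sigma(0,x)$ strictly exceeds the corresponding term defining $V(x)$ by an amount proportional to $x(1)$, so taking minima preserves strict inequality and produces a gap $V_\sigma(0,x) - V(x) > 0$.

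Choosing $\epsilon$ small enough that $C_7 := V_\sigma(0,x) - V(x) - 3\epsilon > 0$ and then $N$ large enough to absorb the constant factor $2$ then yields the claimed bound $e^{-C_7 n}$ for $n > N$. The main obstacle I anticipate is precisely the verification of this strict separation of rate functions: conceptually, any LD-optimal path that first visits $\partial_1$ incurs a detour cost relative to the direct path used by $V(x)$, and the concrete comparison above is what makes this detour quantitatively visible from the two subsolutions. The remainder is bookkeeping around Propositions \ref{p:LDupper}, \ref{p:twostageboundX} and \ref{p:twostageboundbarX}.
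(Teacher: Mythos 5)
Your proof is correct and follows essentially the same route as the paper's: the pathwise coupling of $X$ and $\bar X$ up to $\sigma_1$, the decomposition bounding the numerator by $P_{x_n}(\sigma_1<\tau_n<\tau_0)+P_{x_n}(\bar\sigma_1<\bar\tau_n<\infty)$, the upper bounds from Propositions \ref{p:twostageboundX} and \ref{p:twostageboundbarX}, the lower bound on the denominator from Proposition \ref{p:LDupper}, and finally the choice of $\epsilon$ small relative to $V_\sigma(0,x)-V(x)>0$. The one genuine addition you make is the explicit term-by-term verification that $V_\sigma(0,x)-V(x)>0$ whenever $x(1)>0$ (using $\log r<\log\rho_1<0$, which makes each branch of the $V_\sigma$-minimum strictly dominate the corresponding branch of the $V$-minimum by a term proportional to $x(1)$); the paper simply asserts this gap as the definition of $2C_7$ without spelling out the comparison, so your version is a useful elaboration rather than a different proof.
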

That
$P_{x_n}(\bar\tau_n < \infty) = P_{T_n(x_n)}(\tau < \infty)$
follows from the the definitions in subsection \ref{ss:LD3}.
\begin{proof}
The definitions \eqref{e:defV} and \eqref{d:defVsigma1} imply that
\[
2C_7= V_\sigma(x,0) - V(x) > 0
\]
for $x \in {\mathbb R}_+^2$, $x(1) + x(2)  < 1$, $x(1) > 0$.
Choose $\epsilon < C_7.$
The processes $X$ and $\bar{X}$ follow exactly the same path until they
hit $\partial_1$. It follows that
\begin{equation}\label{e:fas1}
|P_{x_n}(\tau_n < \tau_0) - P_{x_n}(\bar\tau_n < \infty)|
\le 
P_{x_n}(\sigma_1 < \tau_n < \tau_0) + P_{x_n}(\bar{\sigma}_1 < \bar\tau_n < \infty)
\end{equation}
By Propositions \ref{p:twostageboundX}, \ref{p:twostageboundbarX}
and \ref{p:LDupper}
there exists $N > 0$ such that
\begin{equation}\label{e:fas2}
P_{x_n}(\sigma_1 < \tau_n < \tau_0) + P_{x_n}(\bar{\sigma}_1 < \bar\tau_n < \infty)
\le
e^{-n(V_\sigma(0,x) -\epsilon/2)}
\end{equation}
and
\begin{equation}\label{e:fas3}
P_x(\tau_n <\tau_0) \ge e^{-n(V(x)-\epsilon/2)}
\end{equation}
for $n > N$.
The bounds \eqref{e:fas1}, \eqref{e:fas2} and \eqref{e:fas3}
give
\[
\frac{ |P_{x_n}(\tau_n < \tau_0) - P_{x_n}(\bar{\tau}_n < \infty)|}{
P_{x_n}(\tau_n < \tau_0)} < e^{-n C_7},
\]
for $n > N.$

\end{proof}

\section{Computation of $P_y( \tau < \infty)$}\label{s:Pty}
Theorem \ref{t:mainapprox} tells us that 
$P_{y}(\tau < \infty)$, $y = T_n(x_n)$, approximates
$P_{x_n}(\tau_n < \tau_0)$ with exponentially decaying relative error
for $x(1) > 0.$
To complete our analysis, it remains to compute $P_y(\tau < \infty)$.
As a function of $y$, $P_y(\tau < \infty)$ is a $Y$-harmonic function.
Furthermore, it is $\partial B$-determined, i.e., it has the representation
\[
y \rightarrow {\mathbb E}[ g(Y_\tau) 1_{\{\tau < \infty\}}]
\]
for some function $g$ on $\partial 
B$ (for $y\mapsto \partial P_y(\tau < \infty)$,
$g$ equals $1$ identically).
We will try to compute $P_y(\tau < \infty)$  as a superposition of the $Y$-harmonic functions expounded in Section 
\ref{s:Yharmonic}; because $P_y(\tau < \infty)$ is $1$ for $y \in \partial B$, we would like the
superposition to be as close to $1$ as possible on $\partial B$. 
We have two classes of $Y$-harmonic functions given
in Propositions \ref{p:both} 
(constructed from a single point on ${\mathcal H}$)
 and \ref{p:conjugate} (constructed from conjugate points on ${\mathcal H}$).
The first class gives us only one nontrivial $Y$-harmonic function, computed
in Lemma \ref{l:rho1}: $h_{\rho_1} =[(\rho_1,\rho_1),\cdot].$ Remember that we have assumed
$\bm\alpha(r,1) = r^2/\rho_2 < 1$. This implies that,
among the functions
in the second class, the most relevant for the computation of $P_y(\tau < \infty)$ is
\[
{\bm h}_r = \frac{1}{1-\rho_2/r}h_r =  [(r,1),\cdot] - \frac{1-r}{1-\rho_2/r} [ (r, \rho_2/r^2),\cdot],
\]
because this $Y$-harmonic function exponentially converges to $1$ for $y =(k,k) \in \partial B$ and
$k \rightarrow \infty.$
A simple criterion to check whether 
a $Y$-harmonic function of the form $\sum_{i=1}^I c_i[(\beta_i,\alpha_i)]$
is $\partial B$-determined is given in \cite{sezer2015exit}:
\begin{proposition}\label{p:pbdetermined}
A $Y$-harmonic function of the form $\sum_{i=1}^I c_i[(\beta_i,\alpha_i)]$ is $\partial B$ determined
if $|\beta_i|  < 1$ and $|\alpha_i| \le 1$, $i=1,2,3,...,I.$ 
\end{proposition}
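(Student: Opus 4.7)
The plan is to apply the optional sampling theorem to the martingale $h(Y_k)$ at the bounded stopping time $\tau \wedge n$ and then send $n\to\infty$, aiming to identify $h(y)$ with ${\mathbb E}_y[h(Y_\tau) 1_{\{\tau < \infty\}}]$ (so that $g$ is the restriction of $h$ to $\partial B$). The key preliminary observation is that the hypotheses $|\beta_i| < 1$ and $|\alpha_i| \le 1$ yield the uniform bound
\[
|h(y)| \le \sum_{i=1}^I |c_i|\, |\beta_i|^{y(1)-y(2)}\,|\alpha_i|^{y(2)} \le \sum_{i=1}^I |c_i|
\]
on $\{y \in {\mathbb Z}\times{\mathbb Z}_+: y(1)\ge y(2)\}$. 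Because $Y$ has unit-step increments it cannot cross $\partial B$ without first touching it, so for an initial $y$ with $y(1)\ge y(2)$ (the relevant regime) the path of $Y$ stays in this region up to time $\tau$. Thus $h(Y_{\tau \wedge n})$ is uniformly bounded and the optional sampling theorem gives
\[
h(y) = {\mathbb E}_y[h(Y_\tau) 1_{\{\tau \le n\}}] + {\mathbb E}_y[h(Y_n) 1_{\{\tau > n\}}].
\]

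The first summand converges to ${\mathbb E}_y[h(Y_\tau) 1_{\{\tau < \infty\}}]$ by dominated convergence (with the same uniform dominator), so the proof reduces to showing that the remainder $R_n \doteq {\mathbb E}_y[h(Y_n) 1_{\{\tau > n\}}]$ tends to $0$. I would split $\{\tau > n\} = \{n < \tau < \infty\} \cup \{\tau = \infty\}$. The contribution of $\{n < \tau < \infty\}$ to $|R_n|$ is at most $(\sum_i |c_i|)\, P_y(n < \tau < \infty)$, and by Proposition \ref{p:laplace} together with Chebyshev's inequality this decays exponentially in $n$.

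The main obstacle is the contribution from $\{\tau = \infty\}$, where the uniform bound alone is not enough and I need genuine pointwise decay of $|h(Y_n)|$. I would establish that $Y_n(1) - Y_n(2) \to \infty$ almost surely by analyzing the two coordinates separately: the first coordinate of $Y$ is an unconstrained one-dimensional random walk with positive drift $\mu_1 - \lambda_1 > 0$, so $Y_n(1)/n \to \mu_1 - \lambda_1$ a.s.\ by the strong law of large numbers, while viewed in isolation the second coordinate is a lazy reflected walk on ${\mathbb Z}_+$ with negative net drift $\lambda_2 - \mu_2 < 0$, hence positive recurrent, giving $Y_n(2)/n \to 0$ a.s. Combining these,
\[
|h(Y_n)| \le \Bigl(\sum_{i=1}^I |c_i|\Bigr)\bigl(\max_i |\beta_i|\bigr)^{Y_n(1)-Y_n(2)} \longrightarrow 0
\]
almost surely on $\{\tau = \infty\}$, and a last application of dominated convergence shows that the contribution of $\{\tau = \infty\}$ to $R_n$ also vanishes, completing the proof.
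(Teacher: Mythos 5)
Your overall structure -- apply optional sampling to the bounded martingale $h(Y_{\tau\wedge n})$ on the region $\{y(1)\ge y(2)\}$, which $Y$ cannot leave before hitting $\partial B$ because of unit steps, then let $n\to\infty$ -- is the natural and correct approach, and indeed it is the route the paper (which defers the proof to \cite{sezer2015exit}) implicitly relies on. However, there is one step where you assert more than you justify, and one unnecessary dependency worth flagging.

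The unjustified step is the claim that positive recurrence of the second coordinate \emph{gives} $Y_n(2)/n\to 0$ almost surely. Positive recurrence by itself yields a stationary distribution and ergodic averages of bounded functionals, not pathwise control of $Y_n(2)$ itself. To get $Y_n(2)/n\to 0$ a.s.\ you need an additional argument, e.g.\ an excursion decomposition at the successive visits to $0$: these return times $\tau_0<\tau_1<\cdots$ have i.i.d.\ increments $\xi_k=\tau_{k+1}-\tau_k$ with ${\mathbb E}[\xi_1]<\infty$, the maximum of $Y(2)$ on the $k$-th excursion is at most $\xi_k$ (unit steps), and Borel--Cantelli with $\sum_k P(\xi_k>\epsilon k)\le \epsilon^{-1}{\mathbb E}[\xi_1]<\infty$ gives $\xi_k/k\to 0$, hence $Y_n(2)/n\to 0$. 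Alternatively, and more simply, you do not need almost sure divergence at all: since the integrand is uniformly bounded, it suffices to show $h(Y_n)1_{\{\tau=\infty\}}\to 0$ in probability, and this follows directly from $Y_n(1)\to\infty$ in probability (positive drift) together with tightness of $\{Y_n(2)\}_n$ (which \emph{is} immediate from positive recurrence), without any path\-wise estimate on the second coordinate.

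Your invocation of Proposition \ref{p:laplace} for the term $P_y(n<\tau<\infty)$ is correct within the paper's standing assumptions, but it is unnecessary and imports the technical condition $r^2/\rho_2<1$ into a statement that should not depend on it: the events $\{n<\tau<\infty\}$ decrease to the empty set, so $P_y(n<\tau<\infty)\to 0$ by continuity from above, and the uniform bound on $h$ is all you need. Finally, when you say the first coordinate ``is an unconstrained one-dimensional random walk,'' the clean justification is that the reflection on $\partial_2$ replaces the increment $(0,-1)$ by $(0,0)$, both of which have first component $0$, so $Y_n(1)-Y_0(1)=\sum_{k<n}J_k(1)$ exactly, and the SLLN applies directly.
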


Our first theoretical result on $P_y(\tau < \infty)$ 
arises from a linear combination
of $h_{\rho_1}$ and $h_r$:
\begin{proposition}\label{p:explicit}
If 
\begin{equation}\label{as:harmonic}
\rho_2\rho_1 = r^2 
\end{equation}
then
\begin{equation}\label{e:formula1}
P_y( \tau <\infty) = {\bm h}_r(y) + \frac{1-r}{1-\rho_2/r} h_{\rho_1}(y)
\end{equation}
for $y \in B.$
\end{proposition}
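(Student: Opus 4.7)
The plan is to verify three properties of the right hand side $u(y) \doteq \bm h_r(y) + \frac{1-r}{1-\rho_2/r} h_{\rho_1}(y)$: (i) $u$ is $Y$-harmonic on $B$; (ii) $u \equiv 1$ on $\partial B$; (iii) $u$ is $\partial B$-determined, so that $u(y) = \mathbb{E}_y[u(Y_\tau) 1_{\{\tau<\infty\}}] = P_y(\tau<\infty)$.

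Property (i) is immediate: $\bm h_r$ is $Y$-harmonic by Proposition \ref{p:conjugate} (applied to the conjugate pair $(r,1), (r, r^2/\rho_2) \in {\mathcal H}$, well-defined because $\rho_1\neq \rho_2$ ensures $1-\rho_2/r\neq 0$), and $h_{\rho_1} = [(\rho_1,\rho_1),\cdot]$ is $Y$-harmonic by Lemma \ref{l:rho1}; any linear combination of $Y$-harmonic functions is $Y$-harmonic.

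For (ii), evaluate on $\partial B$: if $y = (k,k)$ then
\[
\bm h_r(k,k) = 1 - \frac{1-r}{1-\rho_2/r}\left(\frac{r^2}{\rho_2}\right)^{k}, \qquad h_{\rho_1}(k,k) = \rho_1^{k}.
\]
The assumption \eqref{as:harmonic} gives $r^2/\rho_2 = \rho_1$, so the two residual exponential terms cancel exactly:
\[
u(k,k) = 1 - \frac{1-r}{1-\rho_2/r}\rho_1^{k} + \frac{1-r}{1-\rho_2/r}\rho_1^{k} = 1.
\]
This is precisely the reason to take the coefficient $\frac{1-r}{1-\rho_2/r}$ in front of $h_{\rho_1}$.

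Finally (iii) is where we invoke Proposition \ref{p:pbdetermined}: $u$ is a linear combination of $[(\beta_i,\alpha_i),\cdot]$ with $(\beta_i,\alpha_i) \in \{(r,1),(r,r^2/\rho_2),(\rho_1,\rho_1)\}$. In each triple $|\beta_i| = r$ or $\rho_1$, both of which are strictly less than $1$ by stability and the definition of $r$; and $|\alpha_i| \le 1$ by assumption \eqref{as:alphaconj} for the middle term and by stability for the third term (the first has $\alpha=1$). Hence $u$ is $\partial B$-determined, which together with $u|_{\partial B}\equiv 1$ yields $u(y) = P_y(\tau<\infty)$ for $y\in B$. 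The only delicate step is verifying the cancellation in (ii), which is entirely algebraic once $\rho_1\rho_2 = r^2$ is invoked; no subtle obstacle remains.
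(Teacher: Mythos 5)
Your proposal is correct and follows essentially the same route as the paper's proof: $Y$-harmonicity by construction, the exact cancellation on $\partial B$ forced by $\rho_1\rho_2=r^2$ (i.e.\ $r^2/\rho_2=\rho_1$), and $\partial B$-determinedness via Proposition \ref{p:pbdetermined} since all $\beta_i<1$ and $\alpha_i\le 1$. Your version merely spells out the boundary algebra that the paper leaves implicit.
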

\begin{proof}
The right side of \eqref{e:formula1} is $Y$-harmonic by construction. Furthermore,
$\rho_2\rho_1 = r^2$ implies 
${\bm h}_r(y) + \frac{1-r}{1-\rho_2/r} h_{\rho_1}(y) = 1$
for $y \in \partial B.$ Therefore, to prove \eqref{e:formula1} it suffices
to prove that
\begin{equation}\label{e:formularight}
{\bm h}_r + \frac{1-r}{1-\rho_2/r} h_{\rho_1}
\end{equation}
is $\partial B$-determined. For this we will use Proposition \ref{p:pbdetermined};
in the present case,
the $\beta_i$ are $\rho_1, r < 1$ and the $\alpha_i$ are $1$ and $\rho_1 \le 1$. It follows
that \eqref{e:formularight} is $\partial B$-determined.
\end{proof}
If \eqref{as:harmonic} doesn't hold, i.e., if $r^2 \neq \rho_1 \rho_2$ then
one can proceed in several ways. 
As a first step, one can use the functions $h_r$ and
$h_{\rho_1}$ to construct lower and upper bounds on $P_y(\tau < \infty)$:
\begin{proposition}\label{p:relativeerr}
There exists positive constants $c_{0}$, $c_{1}$, and $C_8$
\begin{equation}\label{e:upperlower}
P_y( \tau < \infty)  \le h^{a,0}(y) \le C_8 P_y(\tau < \infty)
\end{equation}
where
\begin{equation}\label{d:defha}
h^{a,0} = c_0 {\bm h}_r + c_1 h_{\rho_1}.
\end{equation}
In particular, $h^{a,0}$ approximates $P_y(\tau < \infty)$ with bounded
relative error.
\end{proposition}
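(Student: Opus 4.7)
The plan is to exhibit $c_0 > 1$ and $c_1 > 0$ for which $h^{a,0}$ is $Y$-harmonic, $\partial B$-determined, uniformly bounded on $\partial B$, and bounded below by $1$ on $\partial B$. Because $h^{a,0}$ will then be $\partial B$-determined with boundary values equal to its own restriction, one will have
\[
h^{a,0}(y) = {\mathbb E}_y\bigl[h^{a,0}(Y_\tau) 1_{\{\tau < \infty\}}\bigr], \quad y \in B,
\]
and both inequalities in \eqref{e:upperlower} will follow at once: the lower bound $h^{a,0} \ge 1$ on $\partial B$ gives $h^{a,0}(y) \ge P_y(\tau < \infty)$, and a uniform upper bound $h^{a,0} \le C_8$ on $\partial B$ gives $h^{a,0}(y) \le C_8\, P_y(\tau < \infty)$.

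The harmonicity and $\partial B$-determination of $h^{a,0}$ are routine. Harmonicity follows by linearity: ${\bm h}_r$ is $Y$-harmonic by Proposition \ref{p:conjugate} (applied to the conjugate pair $(r,1),(r, r^2/\rho_2) \in {\mathcal H}$) and $h_{\rho_1}$ by Lemma \ref{l:rho1}. Proposition \ref{p:pbdetermined} then applies to $h^{a,0}$ because all three loglinear constituents $[(r,1),\cdot]$, $[(r, r^2/\rho_2),\cdot]$, $[(\rho_1,\rho_1),\cdot]$ have $\beta \in \{r,\rho_1\} \subset (0,1)$ and $\alpha \in \{1, r^2/\rho_2, \rho_1\} \subset (0,1]$, using the standing assumptions $r \le \rho_1 < 1$ and $r^2/\rho_2 < 1$.

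The technical core is the choice of $c_0$ and $c_1$. On $\partial B$, setting $y = (k,k)$,
\[
h^{a,0}(k,k) = c_0\bigl(1 - a\mu^k\bigr) + c_1 \rho_1^k, \qquad a := \frac{1-r}{1-\rho_2/r},\quad \mu := \frac{r^2}{\rho_2},
\]
and a short computation using $\rho_2 < r$ and $r^2 < \rho_2$ shows that $a > 1$ and $\mu \in (0,1)$, while $\rho_1 \in (0,1)$ by stability. I would fix any $c_0 > 1$ and rearrange the target inequality $h^{a,0}(k,k) \ge 1$ as $c_1 \rho_1^k \ge c_0 a \mu^k - (c_0 - 1)$. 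The right-hand side is negative, and hence the constraint vacuous, once $\mu^k \le (c_0-1)/(c_0 a)$, i.e.\ for all $k$ past some threshold $K$; on the remaining finite set $k = 0,1,\dots,K$ the required lower bound on $c_1$ is the finite maximum of $\rho_1^{-k}\bigl(c_0 a \mu^k - (c_0-1)\bigr)$. Choosing $c_1$ above that maximum secures $h^{a,0} \ge 1$ on $\partial B$, and the trivial bound $\sup_{k\ge 0} h^{a,0}(k,k) \le c_0(1+a) + c_1$ supplies $C_8$.

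The main obstacle is structural: since $a > 1$, the function $c_0 {\bm h}_r$ is strictly negative near the origin of $\partial B$, so ${\bm h}_r$ by itself cannot be pushed above $1$ by rescaling. The auxiliary harmonic function $h_{\rho_1}$ is introduced precisely to correct this near-boundary defect, and what ultimately makes a single finite $c_1$ sufficient is that both $\mu$ and $\rho_1$ lie strictly inside $(0,1)$, so that the obstruction is localised to finitely many values of $k$.
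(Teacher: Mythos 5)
Your proof is correct and follows the same overall architecture as the paper's: establish that $h^{a,0} = c_0 {\bm h}_r + c_1 h_{\rho_1}$ is $Y$-harmonic and $\partial B$-determined via Propositions \ref{p:conjugate}, \ref{p:both} and \ref{p:pbdetermined}, pin $h^{a,0}$ between $1$ and a constant $C_8$ on $\partial B$, and pass to $B$ through the representation $h^{a,0}(y) = {\mathbb E}_y[h^{a,0}(Y_\tau)1_{\{\tau<\infty\}}]$. Your identification of $a = (1-r)/(1-\rho_2/r) > 1$ and $\mu = r^2/\rho_2 \in (0,1)$ on $\partial B$, and the recognition that $h_{\rho_1}$ compensates the negativity of ${\bm h}_r$ near the origin of $\partial B$, are both accurate and match the paper's use of the same two harmonic functions.

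Where you differ, and improve, is in the choice of constants. The paper splits into the two cases $\rho_1 > r^2/\rho_2$ and $\rho_1 < r^2/\rho_2$: in the first it can afford $c_0 = 1$, $c_1 = a$, because the difference $\rho_1^k - \mu^k$ is then nonnegative for all $k$; in the second it must rescale by $2$ and solve a min problem to push the combination above $1/2$ and then above $1$. Your device of taking $c_0$ strictly larger than $1$ sidesteps this dichotomy entirely: once $c_0 > 1$, the inequality $c_1\rho_1^k \ge c_0 a\mu^k - (c_0-1)$ is automatically satisfied for all $k$ past a finite threshold because $\mu < 1$, and the remaining finitely many constraints determine an explicit finite lower bound for $c_1$. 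This single argument covers both of the paper's cases (and even the boundary case $\rho_1 = r^2/\rho_2$) uniformly, at the trivial cost of a slightly larger multiplicative constant $C_8 \le c_0 + c_1$. It is the same proof strategy with a cleaner bookkeeping of constants.
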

\begin{proof}
If $\rho_1 > r^2/\rho_2$, one can set $c_0 = 1$ and $c_1 = \frac{1-r}{1-\rho_2/r}$ since, for these values
\begin{equation}\label{e:boundonB}
h^{a,0} =  {\bm h}_r + \frac{1-r}{1-\rho_2/r} h_{\rho_1} \ge 1
\end{equation}
on $\partial B$. Both $h^{a,0}$ and $y\mapsto P_y( \tau < \infty)$;
this and \eqref{e:boundonB} imply $h^{a,0}(y) \ge P_y(\tau < \infty)$ for
$y \in B$. To get the second bound on \eqref{e:upperlower} set
\begin{equation}\label{d:C8}
C_8 =  1 + 
\frac{1-r}{1-\rho_2/r}
\max_{x \ge 0}\left[
 \rho_1^x - \left(\frac{r^2}{\rho_2}\right)^x \right].
\end{equation}
With this choice of $C_8$ we get the second bound in \eqref{e:upperlower}
on $\partial B$; that both $y \mapsto P_y(\tau < \infty)$ and
$h^{a,0}$ are $\partial B$-determined implies the same bound on all of
$B$.

If $\rho_1 < r^2/\rho_1$, first choose $C_0$ so that
\begin{equation}\label{e:boundhalf}
 1 + 
\min_{x \ge 0}\left[
C_0 \rho_1^x - 
\frac{1-r}{1-\rho_2/r}
\left(\frac{r^2}{\rho_2}\right)^x \right] \ge 1/2.
\end{equation}
Then
\[
h^{a,0}(y) = 2 h_r(y) + 2C_0 h_{\rho_1}(y) \ge 1
\]
for $y \in \partial B$, from which the first bound in \eqref{e:upperlower}
follows.
To get the second bound, set
\[
C_8/2 =  1 + 
\max_{x \ge 0}\left[
C_0 \rho_1^x - \frac{1-r}{1-\rho_2/r}
\left(\frac{r^2}{\rho_2}\right)^x \right],
\]
and proceed as above.
\end{proof}
Our choice of the constant $1/2$ in \eqref{e:boundhalf} is arbitrary,
any value between $(0,1)$ would suffice for the argument. Therefore,
the constants $c_0$ and $c_1$ are not unique and they can be optimized
to reduce relative error.

\begin{proposition}
For $x  \in {\mathbb R}_+^2$, $x(1) + x(2) < 1$, $x(1) > 0$, 
$x_n = \lfloor nx \rfloor$, and for $n$ large,
$h^{a,0}$ of \eqref{d:defha} evaluated at
$T_n(x_n)$ approximates $P_{x_n}(\tau_n < \tau_0)$ with
bounded relative error.
\end{proposition}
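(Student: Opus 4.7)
The plan is to chain together the two approximation results already established: Theorem \ref{t:mainapprox}, which states that $P_{T_n(x_n)}(\tau < \infty)$ approximates $P_{x_n}(\tau_n < \tau_0)$ with exponentially decaying relative error, and Proposition \ref{p:relativeerr}, which states that $h^{a,0}$ approximates $P_y(\tau < \infty)$ with bounded relative error uniformly in $y \in B$. Composing these two approximations should yield bounded relative error between $h^{a,0}(T_n(x_n))$ and $P_{x_n}(\tau_n < \tau_0)$.

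More concretely, set $y_n = T_n(x_n)$, which satisfies $y_n(1) > y_n(2)$ for all sufficiently large $n$ because $x(1) > 0$ and $x(1) + x(2) < 1$; hence $y_n \in B$ and Proposition \ref{p:relativeerr} applies to give $P_{y_n}(\tau < \infty) \le h^{a,0}(y_n) \le C_8\, P_{y_n}(\tau < \infty)$. From Theorem \ref{t:mainapprox}, for $n > N$,
\[
(1 - e^{-C_7 n})\, P_{x_n}(\tau_n < \tau_0) \le P_{y_n}(\tau < \infty) \le (1 + e^{-C_7 n})\, P_{x_n}(\tau_n < \tau_0).
\]
Combining the two sandwich inequalities gives
\[
(1 - e^{-C_7 n})\, P_{x_n}(\tau_n < \tau_0) \le h^{a,0}(y_n) \le C_8 (1 + e^{-C_7 n})\, P_{x_n}(\tau_n < \tau_0),
\]
so the ratio $h^{a,0}(y_n)/P_{x_n}(\tau_n < \tau_0)$ lies in a bounded interval converging to $[1, C_8]$, and consequently
\[
\frac{|h^{a,0}(y_n) - P_{x_n}(\tau_n < \tau_0)|}{P_{x_n}(\tau_n < \tau_0)} \le (C_8 - 1) + C_8\, e^{-C_7 n},
\]
which is bounded uniformly in $n$.

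There is no real obstacle here: both component results do all the work, and the argument is simply a triangle-inequality-style composition of a multiplicative approximation with bounded constant $C_8$ and a multiplicative approximation with vanishing error $e^{-C_7 n}$. The only small detail to check is that $T_n(x_n) \in B$ for $n$ large so that Proposition \ref{p:relativeerr} is applicable there; this follows from $n - x_n(1) - x_n(2) \to \infty$ under the standing assumption $x(1) + x(2) < 1$, which in particular gives $y_n(1) = n - x_n(1) > x_n(2) = y_n(2)$ for all sufficiently large $n$.
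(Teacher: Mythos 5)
Your argument is exactly the paper's: chain Theorem \ref{t:mainapprox} (vanishing relative error between $P_{x_n}(\tau_n<\tau_0)$ and $P_{T_n(x_n)}(\tau<\infty)$) with Proposition \ref{p:relativeerr} (bounded relative error between $P_y(\tau<\infty)$ and $h^{a,0}(y)$). You simply make the two sandwich inequalities and the applicability check $T_n(x_n)\in B$ explicit, which the paper leaves implicit.
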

\begin{proof}
We know by Theorem \ref{t:mainapprox} that, for $x \in {\mathbb R}_+^2$,
 $x(1) + x(2) < 1$ and $x(1)> 0$,
$P_{T_n(x_n)}(\tau < \infty)$ approximates $P_{x_n}(\tau_n <\tau_0)$
with vanishing relative error. On the other hand,
the above Proposition tells us that $h^{a,0}$ of \eqref{d:defha}
approximates $P_y(\tau < \infty)$ with bounded relative error.
These imply that $h^{a,0}(T_n(x))$ approximates 
$P_{x_n}(\tau_n < \tau_0)$ with bounded
relative error.
\end{proof}

Proposition \ref{p:conjYz} gives not one but a one-complex-parameter family of
$Y$-harmonic functions. A natural question is whether one can obtain
finer approximations of $P_y(\tau < \infty)$ than what $h^{a,0}$ provides.
In this, we need $\partial B$-determined $Y$-harmonic functions. The
next proposition (an adaptation of \cite[Proposition 4.13]{sezer2015exit}
to the current setting) identifies a class of these which are naturally
suitable for the approximation
of $P_y(\tau < \infty).$
\begin{proposition}\label{p:pbdeterminedconj}
There exists $0 < R < 1$ such that for all $\alpha \in {\mathbb C}$
with $R < |\alpha| \le 1$ , $\max(|\beta_1(\alpha)| ,
|\bm\alpha(\beta_1(\alpha),\alpha)|) < 1$; in particular
$h_{\beta_1(\alpha)}$ is $\partial B$-determined.
\end{proposition}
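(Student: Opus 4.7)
The plan is to derive, via the triangle inequality, a sharp dichotomy for $|\beta_1(\alpha)|$ on the unit circle $|\alpha|=1$, then anchor the correct branch using continuity, and finally extend to an annulus by continuity of the two quantities involved.

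The key dichotomy is as follows. For $|\alpha|=1$ and any root $\beta$ of \eqref{e:pasbeta}, rewrite the equation as $\beta = (\mu_1+\mu_2/\alpha)\beta^2 + (\lambda_1+\lambda_2\alpha)$. Taking moduli and using $|\mu_1+\mu_2/\alpha|\le\mu_1+\mu_2$ and $|\lambda_1+\lambda_2\alpha|\le\lambda_1+\lambda_2$ when $|\alpha|=1$ yields
\[
(\mu_1+\mu_2)|\beta|^2 - |\beta| + (\lambda_1+\lambda_2) \ge 0.
\]
The roots of the real quadratic $t\mapsto (\mu_1+\mu_2)t^2 - t + (\lambda_1+\lambda_2)$ are precisely $r$ and $1$, as can be read off from the computation of $\beta_1(1)=r$, $\beta_2(1)=1$ preceding \eqref{d:boldh}. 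Hence $|\beta| \in [0,r] \cup [1,\infty)$ for every root on the unit circle.

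Next, anchor the branch $\beta_1$ on the component $[0,r]$. A short computation of $\Delta(\alpha) = 1-4(\mu_1+\mu_2/\alpha)(\lambda_1+\lambda_2\alpha)$ shows that its imaginary part on $\{|\alpha|=1\}$ equals a nonzero multiple of $(\mu_1\lambda_2-\mu_2\lambda_1)\sin\theta$, while at $\alpha=\pm 1$ one has $\Delta$ real and strictly positive under the standing assumptions; the crucial nonvanishing $\mu_1\lambda_2\ne\mu_2\lambda_1$ comes from $\rho_1\ne\rho_2$ in \eqref{as:tech}. Thus $\Delta(\alpha)$ avoids the branch cut of the principal square root on the unit circle, so $\beta_1(\alpha)$ is continuous there. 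Because $\beta_1(1)=r$ and the unit circle is connected, $|\beta_1(\alpha)|$ cannot jump between the components $[0,r]$ and $[1,\infty)$, so $|\beta_1(\alpha)|\le r$ for every $|\alpha|=1$. This yields the two desired strict bounds: $|\beta_1(\alpha)|\le r<1$ (since $r\le\rho_1<1$ by \eqref{as:orderrho}), and $|\bm\alpha(\beta_1(\alpha),\alpha)| = |\beta_1(\alpha)|^2/(|\alpha|\rho_2) \le r^2/\rho_2 < 1$ by Assumption \eqref{as:alphaconj}.

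To pass from $|\alpha|=1$ to an annulus, observe that both $\alpha\mapsto|\beta_1(\alpha)|$ and $\alpha\mapsto|\bm\alpha(\beta_1(\alpha),\alpha)|$ are continuous on a neighborhood of the closed unit disk minus $\{0\}$ and are strictly bounded below $1$ on the compact set $\{|\alpha|=1\}$; continuity and compactness then yield the same strict bounds on some annulus $R<|\alpha|\le 1$ with $R<1$. The $\partial B$-determined conclusion follows by applying Proposition \ref{p:pbdetermined} to the two conjugate points $(\beta_1(\alpha),\alpha)$ and $(\beta_1(\alpha),\bm\alpha(\beta_1(\alpha),\alpha))$ in the definition \eqref{d:hbeta} of $h_{\beta_1(\alpha)}$. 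The main obstacle is the continuity step for $\beta_1$: a priori $\sqrt{\Delta(\alpha)}$ has a branch cut on the negative real axis, and the whole argument hinges on verifying that $\Delta(\alpha)$ stays off this cut as $\alpha$ traverses the unit circle, which reduces to $\rho_1\ne\rho_2$.
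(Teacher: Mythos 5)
Your proof is correct, but it takes a genuinely more self-contained route than the paper: where the paper simply cites \cite[Proposition~4.7]{sezer2015exit} for the key bound $|\beta_1(\alpha)|\le r$ on the unit circle, you derive it from scratch. Your triangle-inequality dichotomy (every root $\beta$ of \eqref{e:pasbeta} with $|\alpha|=1$ has $|\beta|\le r$ or $|\beta|\ge 1$, because the real quadratic $(\mu_1+\mu_2)t^2 - t + (\lambda_1+\lambda_2)$ has roots exactly $r$ and $1$ via $\lambda_1+\lambda_2+\mu_1+\mu_2=1$), followed by anchoring the branch at $\beta_1(1)=r$ and invoking connectedness of the circle, is a clean and elementary replacement for the external reference. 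The branch-cut verification ($\Im\Delta(\alpha)$ proportional to $(\mu_1\lambda_2-\mu_2\lambda_1)\sin\theta$, nonvanishing off $\alpha=\pm 1$ precisely because $\rho_1\neq\rho_2$, and $\Delta(\pm 1)>0$) is correctly reduced to the standing assumption \eqref{as:tech}. The rest of the argument --- the estimate $|\bm\alpha(\beta_1(\alpha),\alpha)| = |\beta_1(\alpha)|^2/(|\alpha|\rho_2)\le r^2/\rho_2 < 1$, continuity-and-compactness to pass to an annulus, and appealing to Proposition~\ref{p:pbdetermined} --- coincides with the paper's proof.

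One small point worth addressing to make your continuity argument airtight: the closed-form expression \eqref{e:betas} has denominator $2(\mu_2/\alpha+\mu_1)$, which vanishes on the unit circle precisely when $\mu_1=\mu_2$ and $\alpha=-1$ (a case not excluded by $\rho_1\neq\rho_2$). Your continuity claim for $\beta_1$ there is still correct, but it requires noting that the singularity is removable: rationalizing via $1-\sqrt{\Delta} = (1-\Delta)/(1+\sqrt{\Delta}) = 4(\mu_2/\alpha+\mu_1)(\lambda_1+\lambda_2\alpha)/(1+\sqrt{\Delta})$ gives the equivalent representation $\beta_1(\alpha) = 2(\lambda_1+\lambda_2\alpha)/(1+\sqrt{\Delta(\alpha)})$, which is manifestly continuous as long as $\Delta(\alpha)$ stays off the closed negative real axis --- the very fact you already established.
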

\begin{proof}
We know by \cite[Proposition 4.7]{sezer2015exit} that 
$|\beta_1(\alpha)| \le r < 1$ for all $|\alpha|=1.$ Then
\[
\left|
\bm\alpha(\beta_1(\alpha),\alpha)
\right| = 
\left| \frac{\beta_1(\alpha)^2}{\alpha \rho_2}
\right| \le \frac{r^2}{\rho_2} < 1,
\]
where the last inequality is the assumption \eqref{as:alphaconj}.
The functions $\beta_1$ and $\bm \alpha$ are continuous; it follows that
the inequality above holds also for $R < |\alpha| \le 1$ if $R < 1$
is sufficiently close to $1$. That $h_{\beta_1}$ is $\partial B$-determined
follows from these and Proposition \ref{p:pbdetermined}.
\end{proof}

We can now use as many of the $\partial B$-determined $Y$-harmonic
functions identified in Propositions \ref{p:conjugate} and
\ref{p:pbdeterminedconj} as we like to
construct finer approximations of $P_y(\tau < \infty)$. Once the
approximation is constructed upperbounds on 
its relative error can be computed from
the maximum and the minimum of the approximation 
on $\partial B$- as was done in the proof of Proposition \ref{p:relativeerr}:
\begin{proposition}\label{p:improvedapprox}
Let $R$ be as in Proposition \ref{p:pbdetermined}.
For $c_k \in {\mathbb C}$ and
$R < |\alpha_k| \le 1$ 
$k=0,1,2,...,K$ 
define
\begin{equation}\label{e:improvedapprox}
h^{a,K} = \Re(h^{a*,K}), 
h^{a*,K} = {\bm h}_r + c_{0} h_{\rho_1} + \sum_{i=1}^K c_k h_{\beta_1(\alpha_k)}.
\end{equation}
Then $h^{a,K}$ is $Y$-harmonic and $\partial B$-determined.
Furthermore, for
\begin{equation}\label{d:defcs}
c^* = \max_{y \in \partial B} |h^{a*,K} -1| < \infty
\end{equation}
$h^{a*,K}$ approximates $P_y(\tau < \infty)$ with relative error bounded
by $c^*$.
\end{proposition}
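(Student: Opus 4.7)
The proof reduces to verifying three assertions in turn, each of which follows from the machinery already developed.

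First, $Y$-harmonicity of $h^{a,K}$: the function $\bm h_r$ is $Y$-harmonic because it is a scalar multiple of the conjugate-pair harmonic $h_r$ from Proposition \ref{p:conjugate}; $h_{\rho_1}$ is $Y$-harmonic by Lemma \ref{l:rho1}; each $h_{\beta_1(\alpha_k)}$ is $Y$-harmonic by Proposition \ref{p:conjugate} applied to the conjugate pair $(\beta_1(\alpha_k),\alpha_k)$ and $(\beta_1(\alpha_k),\bm\alpha(\beta_1(\alpha_k),\alpha_k))$ on $\mathcal H$. Since complex linear combinations of $Y$-harmonic functions are $Y$-harmonic (by linearity of expectation) and since the real part of a $\mathbb C$-valued $Y$-harmonic function is again $Y$-harmonic (the transition operator has real coefficients), $h^{a,K}=\Re(h^{a*,K})$ is $Y$-harmonic.

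Second, $\partial B$-determinedness: apply Proposition \ref{p:pbdetermined} to each atomic summand $[(\beta,\alpha),\cdot]$ appearing in the expansion of $h^{a*,K}$. Inside $\bm h_r$ the points used are $(r,1)$ and $(r,r^2/\rho_2)$, for which $|r|<1$, $|1|\le 1$, and $r^2/\rho_2<1$ by assumption \eqref{as:alphaconj}. For $h_{\rho_1}$ the point is $(\rho_1,\rho_1)$ with $|\rho_1|<1$. For each $h_{\beta_1(\alpha_k)}$, Proposition \ref{p:pbdeterminedconj} guarantees $|\beta_1(\alpha_k)|<1$ and $|\bm\alpha(\beta_1(\alpha_k),\alpha_k)|<1$, while the chosen $\alpha_k$ satisfies $|\alpha_k|\le 1$. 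Thus all atoms meet the criterion of Proposition \ref{p:pbdetermined}, so $h^{a*,K}$ is $\partial B$-determined, and taking real parts preserves this.

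Third, the relative error bound: $\partial B$-determinedness, applied separately to the real and imaginary parts and recombined, yields the probabilistic representation
\begin{equation*}
h^{a*,K}(y)=\mathbb{E}_y\!\left[h^{a*,K}(Y_\tau)\,1_{\{\tau<\infty\}}\right],\qquad y\in B.
\end{equation*}
Using the trivial identity $P_y(\tau<\infty)=\mathbb{E}_y[1_{\{\tau<\infty\}}]$ together with the hypothesis $|h^{a*,K}(y')-1|\le c^*$ for $y'\in\partial B$, one has
\begin{equation*}
\bigl|h^{a*,K}(y)-P_y(\tau<\infty)\bigr|
=\Bigl|\mathbb{E}_y\!\left[\bigl(h^{a*,K}(Y_\tau)-1\bigr)1_{\{\tau<\infty\}}\right]\Bigr|
\le c^*\,P_y(\tau<\infty),
\end{equation*}
which is precisely the claimed relative error bound.

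The only delicate step is the passage from Proposition \ref{p:pbdetermined}, stated for the $\partial B$-determined superpositions, to the complex-valued setting and to the expectation representation used in the third step. This is routine: split $h^{a*,K}$ into real and imaginary parts, apply the criterion to each, and sum. Everything else is assembly of results already in hand, so I expect no substantive obstacle beyond bookkeeping.
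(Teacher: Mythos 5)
Your proof is correct and follows essentially the same path as the paper's: $Y$-harmonicity via Propositions \ref{p:both} and \ref{p:conjugate}, $\partial B$-determinedness via Proposition \ref{p:pbdetermined}, and then the probabilistic representation $h^{a*,K}(y)={\mathbb E}_y[h^{a*,K}(Y_\tau)1_{\{\tau<\infty\}}]$ combined with $|h^{a*,K}-1|\le c^*$ on $\partial B$ to get the relative-error bound. The only (immaterial) deviation is that you bound $|h^{a*,K}(y)-P_y(\tau<\infty)|$ directly for the complex-valued function, whereas the paper passes to the real part $h^{a,K}$ first and uses $|\Re(z)-1|\le|z-1|$; your bound implies theirs, so nothing is lost.
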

\begin{proof}
We know by Propositions \ref{p:both} and \ref{p:conjugate} that
$h^{a*,K}$ is $Y$-harmonic. That $R < |\alpha_k| \le 1$ and
Proposition \ref{p:pbdetermined} imply that $h^{a*,K}$ is also
$\partial B$-determined, i.e., 
\[
h^{a*,K}(y) = {\mathbb E}_y[ h^{a*,K}(Y_{\tau}) 1_{\{\tau  < \infty\}}].
\]
Taking the real part of both sides gives:
\begin{equation}\label{e:haYdeter}
h^{a,K}(y) = {\mathbb E}_y[h^{a,K}(Y_{\tau}) 1_{\{\tau < \infty\}}],
\end{equation}
i.e, $h^{a,K}$ is $Y$-harmonic and $\partial B$-determined.
That $c^* < \infty$ follows from 
$\max|\beta_1(\alpha_k),\bm\alpha(\beta_1(\alpha_k),\alpha_k)| < 1$
(see Proposition \ref{p:pbdetermined}).
The inequality
\begin{equation}\label{e:boundonpartB}
1-c^* < h^{a,K}(k,k) <1 + c^*
\end{equation}
follows from \eqref{d:defcs},  $|\Re(z) -1| \le |z-1|$ 
for any $z \in {\mathbb C}.$
It follows from \eqref{e:boundonpartB} and \eqref{e:haYdeter} that
\begin{align*}
(1-c^*){\mathbb E}_y[1_{\{\tau < \infty\}}]
&\le h^{a,K}(y) \le 
(1+c^*){\mathbb E}_y[1_{\{\tau < \infty\}}]\\
(1-c^*)P_y( \tau < \infty)
&\le h^{a,K}(y) \le 
(1+c^*)P_y(\tau < \infty),
\end{align*}
This implies that $h^{a,K}$ approximates $P_y(\tau < \infty)$ with
relative error bounded by $c^*$.
\end{proof}
One of the key aspects of Proposition \ref{p:improvedapprox} is that
it shows us how to compute an upper bound on the relative error
of an approximation of the form \eqref{e:improvedapprox} from the values
it takes on $\partial B$. We can use this to choose the
$\alpha_k$ and the $c_k$ to reduce relative error, the next subsection
illustrates this procedure.
\subsection{Finer approximations when $r^2 \neq \rho_1 \rho_2$}\label{ss:finer}
To illustrate how one can use approximations of the form
\eqref{e:improvedapprox} to improve on the approximation provided
by Proposition \ref{p:relativeerr}, let us assign
values to the parameters
$\lambda_i$ and  $\mu_i$ satisfying the assumptions \eqref{as:orderrho},
\eqref{as:tech}:
\[
\lambda_1 = 0.1, \mu_1 = 0.2, \lambda_2 = 0.2, \mu_2 = 0.5;
\]
for these choice of parameters we have
\[
r = \frac{\lambda_1 +\lambda_2}{\mu_1 + \mu_2} = \frac{3}{7}.
\]
We note $r^2 = 9/49 \neq 1/5 = \rho_1 \rho_2$;
therefore, we don't have an explicit formula for $P_y(\tau < \infty).$
But Proposition \ref{p:relativeerr} implies that
\begin{equation}\label{e:ha0repeat}
h^{a,0}(y) ={\bm h}_r + \frac{1-r}{1-\rho_2/r}h_{\rho_1}
\end{equation}
 approximates
$P_y(\tau < \infty)$ with relative error bounded by
\begin{align*}
C_8 - 1 &= \frac{1-r}{1-\rho^2/r}\left( \rho_1^{x^*} - 
\alpha_2^{x^*} \right) = 0.3607,\\
\alpha_2 &= \frac{r^2}{\rho_2}, ~~
x^* =  \log(\log(\rho_1)/\log(\alpha_2))/ (\log(\alpha_1) - \log(\rho_1))
\end{align*}
where $C_8$ is computed as in \eqref{d:C8}. Then, by Theorem
\ref{t:mainapprox}, $h^{a,0}(T_n(x_n))$ approximates $P_{x_n}(\tau_n > \tau_0)$
with relative error converging to a level bounded by $C_8 -1 = 0.3607$ .

We can reduce this error
by using further $Y$-harmonic functions given by Propositions
\ref{p:conjugate} and \ref{p:pbdeterminedconj} and constructing
an approximation of the form \eqref{e:improvedapprox}.
We note $\beta_1(0.7)=0.34610$ and therefore, by an argument parallel
to the proof of Proposition \ref{p:pbdeterminedconj}, we infer that
 $|\beta_1(\alpha)| \le 0.34619 $ ,
 $|\bm(\beta_1(\alpha),\alpha)| < 1$
for $|\alpha|=0.7$. Thus, $h_{\beta_1(\alpha)}$ is
 $Y$-harmonic and $\partial B$-determined 
for all $|\alpha|=0.7$, and we can use this class of functions
in improving our approximation of $P_y(\tau < \infty).$
Let us begin with using
$K=3$ additional $Y$-harmonic functions of this form in our approximation:
for the $\alpha$'s let us take
\[
\alpha_{1,j} = 0.7 e^{i \frac{j}{4} 2\pi}, j\in \{1,2,3=K\}.
\]
The resulting harmonic functions are
\[
h_{\beta_1(\alpha_{1,j})}, j \in \{1,2,3=K\}.
\]
(see \eqref{e:betas} and \eqref{d:hbeta}).

Our approximation $h^{a,K}$ will be of the form 
\[
h^{a,K} = \Re(h^{a*,K}),~~
h^{a*,K} = {\bm h}_r + c_{1,0} h_{\rho_1} + \sum_{j=1}^K c_{1,j} h_{\beta_1(\alpha_{1,j})}.
\]
One can choose the coefficients $c_{1,j}$, $j\in \{0,1,2,3=K\}$ in a number
of ways, for example, by
minimizing $L_p$ errors. Here we will proceed in the following simple way:
the ideal situation would be $h^{a,K}(y) = 1$ for all $y \in \partial B$, which
would mean $h^{a,K}(y) = P_y(\tau < \infty)$, but this will not hold in 
general. We will instead
require that this identity holds for $y=(k,k)$, $k=0,1,2,3$. This leads
to the following four dimensional linear equation:
\begin{equation}\label{e:projection}
1 = h^{a,K}(k,k) = {\bm h}_r((k,k)) + c_{1,0} h_{\rho_1}(k,k) + \sum_{j=1}^3 c_{1,j} h_{\beta_1(\alpha_{1,j})}(k,k),
\end{equation}
$k=0,1,2,3$;
Solving \eqref{e:projection} gives
\begin{align*}
c_{1,0} &=  7.80744 - 0.12974i  ,~~~  c_{1,1} =-0.25880 + 1.46155i   \\
c_{1,2} &= -0.26358 - 0.01349i
,~~~c_{1,3}= 0.17597 + 0.01433i
\end{align*}
Once the approximation is computed, following Proposition \ref{p:improvedapprox}
one can easily compute its
relative error in approximating $P_y(\tau < \infty)$ by computing
\[
\max_{y \in \partial B} |h^{a*,K}(y) - 1|.
\]
That
$\max_{j=1,2,..,K}(r,\rho_1, r^2/\rho_1,|\alpha_{1,j}|, |\beta_1(\alpha_{1,j}),
\bm{\alpha}(\beta_1(\alpha_{1,j}),\alpha_{1,j})) < 1$
implies \\
$\argmax_{ y \in \partial B} |h^{a*,K}(y) - 1|$
is finite. For $h^{a*,K}$ computed above, the maximizer turns out to be
$y^* = (4,4)=(K+1,K+1)$ and the maximum approximation error is
\begin{equation}\label{e:cstar}
c^* = \max_{ y \in \partial B} |h^{a*,K}(y) - 1| = |h^{a,K}((y^*)) - 1| = 0.1136.
\end{equation}
The graph of the approximation
error $|h^{a*,K}(y)-1|$, $y \in \partial B$ is shown in Figure \ref{f:approximationerror1}.
\begin{figure}[h]
\centering
\hspace{-1cm}
\begin{subfigure}{0.48\textwidth}
\scalebox{0.45}{
\includegraphics{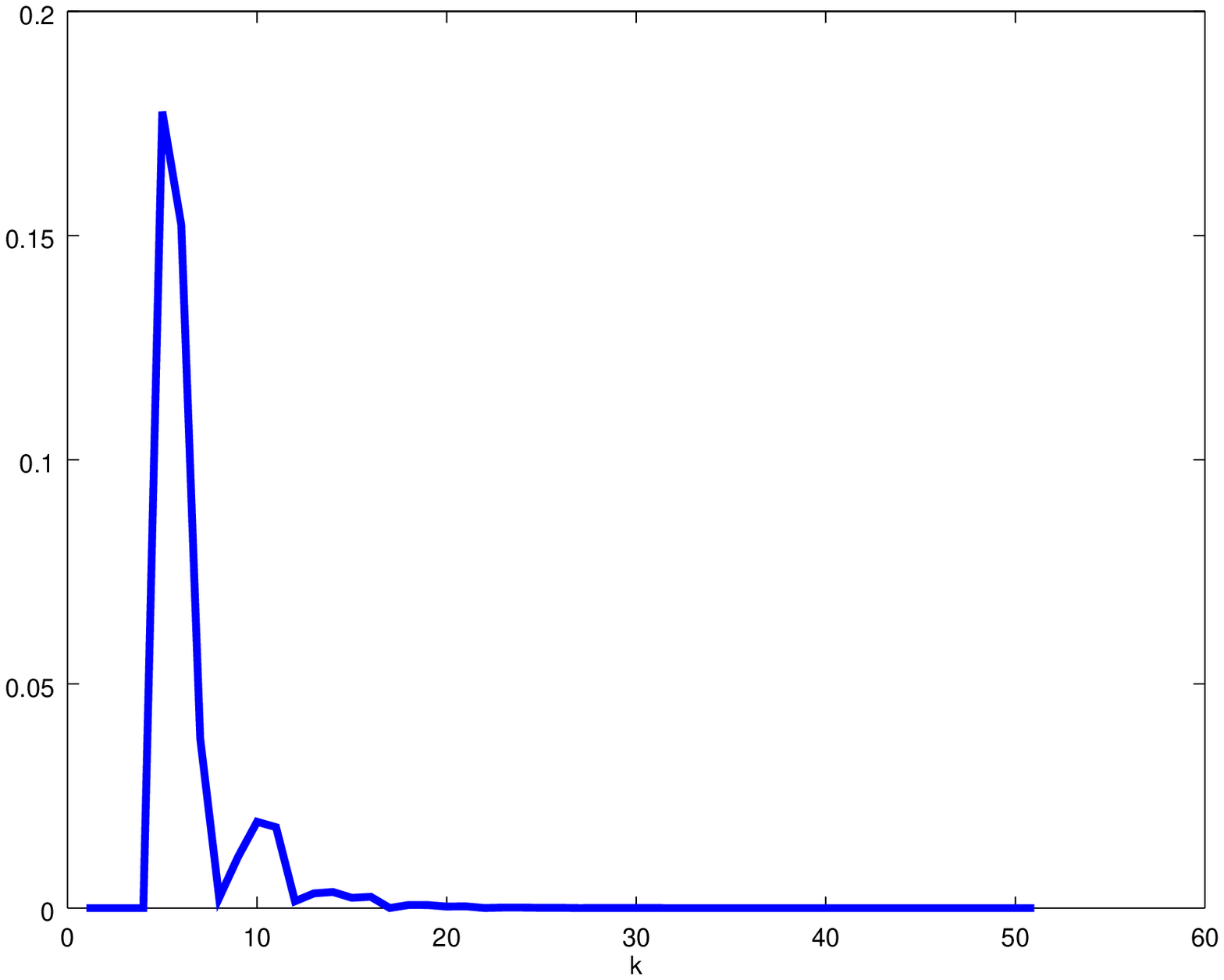}}
\caption{ $|h^{a*,K}(y)-1|$ as a function of $y=(k,k)$}
\label{f:approximationerror1}
\end{subfigure}
\hspace{0.5cm}
\begin{subfigure}{0.48\textwidth}
\scalebox{0.45}{
\includegraphics{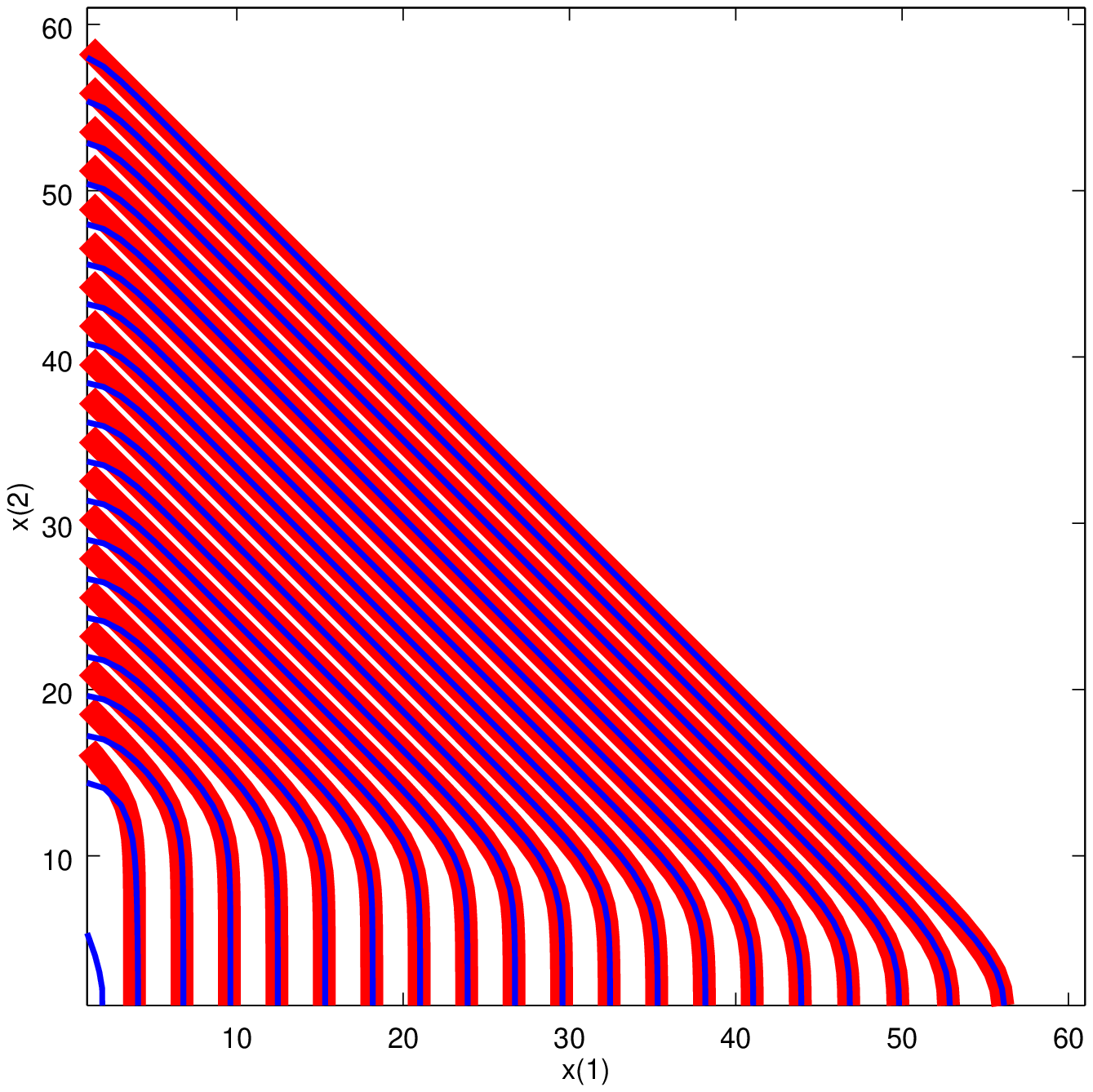}}
\caption{Level curves of $-\frac{1}{n} \log h^{a,0}(T_n(x))$ and 
$-\frac{1}{n}\log P_x(\tau_n < \tau_0)$, $n=60$}
\label{f:approx1}
\end{subfigure}
\end{figure}

By Proposition \ref{p:improvedapprox}
\[
\left|\frac{P_y( \tau < \infty) - h^{a,0}(y)}{P_y(\tau < \infty)}
\right| \le c^*.
\]
Theorem \ref{t:mainapprox} now implies that $h^{a,K}(T_n(x_n))$ approximates
$P_{x_n}(\tau_n < \tau_0)$ with relative error bounded by
$c^* = 0.17764$ for $n$ large.
Therefore, in improving our approximation from $h^{a,0}$ of
\eqref{e:ha0repeat} to $h^{a,3}$ by adding three $Y$-harmonic functions
of the form $h_{\beta_1(\alpha_{1,j})}$  to the approximating  basis,
the relative error decreases from $c_0^* = 0.3607$ to $c_0^* = 0.17764$.
Figure \ref{f:approx1} shows the level curves of 
$-\frac{1}{n} \log h^{a,3}(T_n(x))$ and 
$-\frac{1}{n}\log P_x(\tau_n < \tau_0)$ (the latter computed numerically
via iteration of the harmonic equation satisfied by $P_x(\tau_n < \tau_0)$)
for $n=60$;
the level curves overlap completely except along
$\partial_1$, as suggested by our analysis.

To illustrate how the approximation error decreases when $K$ increases,
let us repeat the computation above with $K=20$. The resulting
maximum relative error turns out to be:
\[
c^* = \max_{ y \in \partial B} |h^{a*,20}(y) - 1| = |h^{a*,20}((21,21)) - 1| = 
1.6211 \times 10^{-3}.
\]
The probability $P_{(4,0)}(\tau_{60} < \tau_0)$, computed numerically,
equals $4.6658 \times 10^{-17}$, the best approximation of this
quantity computed above is
$h^{a,20}(56,0) = 5.2 \times 10^{-17}.$ The discrepancy arises from
the proximity of $(4,0)$ to $\partial_1.$ As we move away from the
$\partial_1$, these quantities get closer
$P_{(10,0)}(\tau_{60} < \tau_0) = 3.3303 \times 10^{-15}$,
$h^{a,20}(50,0) =  3.3358 \times 10^{-15}$, compatible with
the maximum relative error computed above.
Figure \ref{f:convergence} shows how approximation improves
as $K$ increases:
\ninseps{convergence}{$K \mapsto h^{a,K}(50,0)$ and $P_{(10,0)}(\tau_{60} < \tau_0)$
(the flat line), drawn at $10^{-15}$ scale}{0.45}
\vspace{-0.5cm}
\section{Comparison with the tandem case}\label{s:compare}
This section compares the analysis and results of the current work
to those of \cite{tandemsezer} treating the approximation
of the probability $P_{x}(\tau_n < \tau_0)$ for the
 constrained random walk representing two tandem queues, 
which has the increments $(1,0)$, $(-1,1)$ and $(0,-1).$ The main
idea is the same for both walks: i.e., approximation of
$P_x(\tau_n < \tau_0)$ by $P_y(\tau < \infty)$ and computing/ approximating
the latter via harmonic functions constructed out of single and
conjugate points on the
characteristic surface. However, the assumptions, the results and the analysis
manifest nontrivial differences. 
Let us begin with the assumptions:
\paragraph{Assumption $r^2/\rho_2 < 1$}
In the tandem case $\beta_1(1) = \rho_2$ and
the conjugate point of $(\rho_2,1)$ is $(\rho_2,\rho_1)$,
therefore, the stability assumption automatically implies
$\bm\alpha(r,1) < 1$. For the parallel case, $\bm\alpha(r,1)$ can indeed
be greater than $1$ if $r$ and $\rho_1$ are close and $\rho_2$ is small;
we therefore explicitly assume $r^2/\rho_2 < 1.$ This assumption appears
in two places: 1) in the convergence analysis, in the derivation of the bound
\eqref{e:toprovelaplace} and 2) in the computation of $P_y(\tau <\infty)$
in Section \ref{s:Pty}. We think that the use of the assumption 
$r^2/\rho_2 < 1$
in the first case can be removed without much change from the arguments
of the present and earlier works; the details remain for future work.
We think that
the computation of $P_y(\tau < \infty)$ when
$r^2 /\rho_2 > 1$ presents genuine difficulties,
the treatment of which also remains for future work.
Next we point out the differences in results:
\paragraph{Region where $P_y(\tau < \infty)$ is a good approximation
for $P_x(\tau_n < \tau_0)$}
That the tandem walk involves no jumps of the form $(-1,0)$
implies that
$P_{T_n(x_n)}(\tau < \infty)$ provides an approximation of
$P_{x_n}(\tau_n < \tau_0)$ with exponentially decaying
relative error for all $x$ away from $0$;
in contrast, the presence of the jump $(-1,0)$ in the parallel case,
implies that the same
approximation works only away from $\partial_1$ for the
parallel walk case treated in the present work. This difference
shows itself in the proofs of exponential
decay of relative error, too, this is discussed below.

\paragraph{Explicit formula for $P_y(\tau < \infty)$}
In the case of the tandem walk, the probability $P_y(\tau  <\infty)$
can be explicitly represented as a linear combination 
of the harmonic functions
$h_{\rho_1}$ and $h_{\rho_2}$ for all stable parameter values
as long as $\mu_1 \neq \mu_2$; in the parallel case this only happens
when $r^2 = \rho_1 \rho_2$ (see Proposition \ref{p:explicit}).
When $r^2 \neq \rho_1 \rho_2$, $h_r $ and $h_{\rho_1}$ can only provide
an approximation of $P_y(\tau < \infty)$ with bounded relative
error (Proposition \ref{p:relativeerr}). This relative error can be
reduced by adding into the approximation further $\partial B$-determined
$Y$-harmonic functions (Proposition \ref{p:improvedapprox} and
subsection \ref{ss:finer}). 

The changes in argument from the tandem walk to the parallel walk
are as follows:
\paragraph{Analysis of $P_x(\tau_n < \tau_0)$}
In prior works \cite{thesis, DSW, sezer2009importance, sezer2015exit} 
the LD analysis of $P_x(\tau_n < \tau_0)$
and similar quantities
are based on sub and supersolutions of the limit HJB equation, similar to
the analysis given in subsection \ref{ss:ldlow}. In the present work, a novelty
is the use of explicit subharmonic functions (Proposition \ref{p:subharmonicpn})
of the constrained random walk $X$
in the proof of the upperbound Proposition \ref{p:LDupper}.
\paragraph{Analysis of $P_x( \sigma_1 < \tau_n < \tau_0)$}
The probability corresponding to
$P_x (\sigma_1 < \tau_n < \tau_0)$ in the tandem case
is $P_x(\sigma_1 < \sigma_{1,2} < \tau_n < \tau_0)$. 
For the proof of the exponential decay of the relative error,
we need upperbound on these probabilities. Both papers develop
these upperbound from subsolutions to a limit HJB equation.
The subsolution consists of three
pieces (one for each of the stopping times $\sigma_1$, $\sigma_{1,2}$
and $\tau_n$) for the tandem walk, and two pieces for the parallel walk
(one for each of the times $\sigma_1$ and $\tau_n$). In the tandem case,
the pieces of the subsolution are constructed from the subsolution for
the probability $P_x(\tau_n < \tau_0)$, whereas in the parallel case
a new piece is introduced based on the gradient $r_4$ of
\eqref{d:r4}.
\paragraph{Analysis of $P_x (\bar{\sigma}_1 < \tau < \infty)$}
The probability corresponding to
$P_x (\bar{\sigma}_1 < \tau < \infty)$ in the tandem case
is $P_x(\bar{\sigma}_1 < \bar{\sigma}_{1,2} < \tau < \infty)$.
The special nature of the tandem walk allowed us to find upperbounds
on this probability from the explicit formula we have
for $P_y(\tau < \infty)$; this significantly simplified the analysis
of the tandem walk case. For the parallel walk, we extended the
analysis of $P_x(\sigma_1 < \tau_n < \tau_0)$, based on subsolutions,
to
$P_x (\bar{\sigma}_1 < \tau < \infty)$. In this, the most significant
novelty is the analysis given Section \ref{s:laplace}, where we
prove the existence of $z > 1$ such that 
${\mathbb E}_z[ z^\tau 1_{\{\tau < \infty\}}] < \infty$. 
For this, we introduce what we call $Y-z$-harmonic
functions and provide methods of construction of classes of them
from points on $1/z$-level characteristic surfaces, which are
generalizations of characteristic surfaces.

\section{Conclusion}\label{s:conclusion}
 The probability $P_y(\tau < \infty)$ approximates
$P_{x}(\tau_n < \tau_0)$ well when $x$ is away from $\partial_1$;
as noted in the previous section, this is in contrast to the tandem case,
where the approximation is good away from the origin.
How can one extend the approximation to the region along $\partial_1$?
A natural idea, already pointed out in \cite{sezer2015exit} is to 
repeat the same analysis, but this time taking the corner 
$(0,n)$ as the origin of the $Y$ process, i.e., to use the change of
coordinate $y =T_n(x) = (x(1),n-x(2))$ to construct the $Y$ process.
Numerical calculations
indicate
that the resulting approximation will be accurate (i.e., exponentially
decaying relative error) along $\partial_1$
between the points $(0,n)$  
and $(0, \lfloor (1-C_4)n \rfloor)$ (see \eqref{d:C4}
for the definition of $C_4$).
 We believe that arguments and computations
parallel to the ones given in the present work would imply these results;
the details are left for future work. We think that
the extension of the approximation
to the region along the line
segment between $(0,0)$ and $(0, \lfloor (1-c_1)n
\rfloor )$  requires further ideas and computations.

We expect the analysis linking $P_{x}(\tau_n < \tau_0)$ to 
$P_y(\tau < \infty)$ when $\rho_1 = \rho_2$
to be parallel to the analysis given in the current work.
For the computation of $P_y(\tau < \infty)$, when $\rho_1 = \rho_2$,
the case $\lambda_1 = \lambda_2$, $\mu_1 = \mu_2$ appears to be
particularly simple. In this case, upon taking limits in \eqref{e:formula1}
one obtains
\[
P_y(\tau < \infty) = r^{y(1) - y(2)}+ (1-r)r^{y(1)}
( y(1)-y(2)),
\]
where $r=\rho_1 = \rho_2$.
A complete analysis of the computation of
$P_y(\tau < \infty)$ when $\rho_1 = \rho_2$ remains for future
work.

In subsection \ref{ss:finer},
the computation of $P_y(\tau < \infty)$ when $r^2\neq \rho_1 \rho_2$
proceeds as follows: 1) we first construct a candidate approximation
$h^{a,K} = \Re(h^{a*,K})$
of $P_y(\tau < \infty)$ 2) we find an upperbound on the relative
error of the approximation by finding the maximum of
$|h^{a*,K} -1|$ on $\partial B.$
A natural question is the following: given a relative error bound,
can we know apriori that an approximation having that maximum relative
error can be constructed? If that is possible, how many
$Y$-harmonic functions of the form given by Proposition \ref{p:conjugate}
would we need? To answer these questions require a fine
understanding of the functional analytic properties of
the span of the $\partial B$-determined $Y$-harmonic
functions given by Propositions \ref{p:both} and \ref{p:conjugate}.
This appears to be a difficult problem because the functions
given in these propositions don't have simple geometric properties,
such as the orthogonality of the Fourier basis in $L^2$.
A study of this problem remains for future work.

The exact formula for $P_y(\tau < \infty)$ 
for the tandem case has a remarkable
extension to $d$ dimensions; this is derived in 
\cite{sezer2015exit} and is based on harmonic-systems, a concept
defined in that work.
We think that it is also possible, in the case of parallel queues,
to obtain nontrivial harmonic systems
in higher dimensions. A complete characterization of such systems
and the question of under what conditions
they would give a rich class of $Y$-harmonic functions to approximate
$P_y(\tau < \infty)$ also remain challenging problems for future research.

{\small
\bibliography{balayage}
}

\end{document}